\newtheorem{theorem}{Theorem}[section]
\newtheorem{lemma}[theorem]{Lemma}
\newtheorem{cor}[theorem]{Corollary}
\newtheorem{prop}[theorem]{Proposition}
\newtheorem*{maint}{Main Theorem}
\newtheorem*{mainc}{Corollary}
\theoremstyle{definition}
\newtheorem{definition}[theorem]{Definition}
\newtheorem{remark}[theorem]{Remark}
\newcommand {\C}{\mathcal{C}}
\newcommand {\G}{\mathcal{G}}
\newcommand {\mr}{\mathrm}
\newcommand{\Sal}{\operatorname{Sal}}
\begin{document}

\title{
	Helly meets Garside and Artin
}

\author{Jingyin Huang}
\address{Department of Mathematics, The Ohio State University, 100 Math Tower, 231 W 18th Ave, Columbus, OH 43210, U.S.}
\email{huang.929@osu.edu}

\author{Damian Osajda}
\address{Instytut Matematyczny,
	Uniwersytet Wroc\l awski\\
	pl.\ Grun\-wal\-dzki 2/4,
	50--384 Wroc{\l}aw, Poland}
\address{Institute of Mathematics, Polish Academy of Sciences\\
	\'Sniadeckich 8, 00-656 War\-sza\-wa, Poland}
\email{dosaj@math.uni.wroc.pl}
\subjclass[2010]{{20F65, 20F36, 20F67}} \keywords{Helly graph, Helly group, Garside structure, Artin group}

\begin{abstract}
A graph is Helly if every family of pairwise intersecting combinatorial balls has a nonempty intersection.
We show that weak Garside groups of finite type and FC-type Artin groups are Helly, that is, they act geometrically on Helly graphs. In particular, such groups act geometrically on spaces with convex geodesic bicombing, equipping them with a nonpositive-curvature-like structure. 
That structure has many properties of a CAT(0) structure and, additionally, it has a combinatorial flavor implying biautomaticity.
As immediate consequences we obtain new results
for FC-type Artin groups (in particular braid groups and spherical Artin groups) and weak Garside groups, including e.g.\ fundamental groups of the complements of complexified finite simplicial arrangements of
hyperplanes, braid groups of well-generated complex reflection groups, and one-relator groups with non-trivial center. Among the results are:
biautomaticity, existence of EZ and Tits boundaries, the Farrell-Jones conjecture, the coarse Baum-Connes conjecture,
and a description of higher order homological and homotopical Dehn functions. 
As a means of proving the Helly property we introduce and use the notion of a (generalized) cell Helly complex.  
\end{abstract}

\maketitle
\tableofcontents
\setcounter{tocdepth}{2}

\section{Introduction}
\label{s:intro}

\subsection{Main result}
\label{s:main}
In this article we design geometric models for Artin groups and Garside groups, which are two generalizations of braid groups. 

Among the vast literature on these two classes of groups, we are particularly inspired by the results concerning the Garside structure, rooted in \cite{Garsidebraidgroup,Deligne,brieskorn1972artin}, and the small cancellation structure for certain Artin groups, exhibited in \cite{AppelSchupp1983,Appel1984,Pride1986}. These two aspects actually have interesting interactions, yielding connections with injective metric spaces and Helly graphs.

For geodesic metric spaces, being \emph{injective} (or, in other words, \emph{hyperconvex} or \emph{having the  binary intersection property}) can be characterized as follows: any collection of pairwise intersecting closed metric balls has a nonempty intersection. The graph-theoretic analogues of injective metric spaces are called \emph{Helly graphs} (Definition~\ref{d:Helly}). Groups acting on injective metric spaces were studied in \cite{Lang2013,descombes2016flats}, leading to a number of results parallel to the CAT(0) setting \cite{DesLang2015,MR3563261,miesch2018cartan}. The article \cite{Helly} initiates the studies of \emph{Helly groups}, that is, groups acting properly and cocompactly by graph automorphisms (i.e., \emph{geometrically}) on Helly graphs.


%

\begin{maint}
	Weak Garside groups of finite type and Artin groups of type FC are Helly.
\end{maint}

The Main Theorem should be seen as providing geometric models for groups in question with nonpositive-curvature-like structure. Consequences of this theorem are explained in Section~\ref{s:conseq}.

See Section~\ref{subsec:Garside basic} and Section~\ref{subsec:Artin def} for the definitions of weak Garside groups and Artin groups. Examples of (weak) Garside groups of finite type include:
\begin{enumerate}
	\item fundamental groups of the complements of complexified finite simplicial arrangements of
	hyperplanes \cite{Deligne};
	\item spherical Artin groups \cite{Deligne,brieskorn1972artin};
	\item braid groups of well-generated complex reflection groups \cite{bessis2015finite};
	\item structure groups
	of non-degenerate, involutive and braided set-theore\-ti\-cal solutions of
	the quantum Yang-Baxter equation \cite{MR2764830};
	\item one-relator groups with non-trivial center and tree products of cyclic groups \cite{picantin2013tree}.
\end{enumerate}

It is known that Garside groups are biautomatic \cite{Charney1992,Deho2002}. This suggests the non-positively curved (NPC) geometry of these groups. However, it is natural to ask for other notions of NPC for Garside groups in order to understand problems of coarse or continuous nature and to provide more convexity than the Garside normal form does. 

A few classes of Artin groups are known to be CAT(0) \cite{ChaDav1995,BraMcC2000,Brady2000,BraCri2002,Bell2005,BraMcC2010,HKS2016,haettel2019extra}. This leads to the speculation that all Artin groups should be NPC. However, due to the difficulty of verifying the CAT(0) condition in higher dimensional situations, it is not even known whether all braid groups are CAT(0), though some evidences are presented in \cite{dougherty2018boundary}. In particular, it is widely open whether Artin groups of type FC are CAT(0).


The Main Theorem applies to a large class of Artin groups and all finite type weak Garside groups (in particular all braid groups). Though the formulation is combinatorial, it has a continuous counterpart which is very similar to CAT(0). All Helly groups act geometrically on injective metric spaces \cite{Helly}, which in particular have convex geodesic bicombings. The convexity one obtains here is much stronger than biautomaticity and is weaker than CAT(0). See Subsection~\ref{s:conseq} for details.

On the other hand the Main Theorem equips the groups with a robust combinatorial structure --- the one of Helly
graphs. By a result from \cite{Helly} it implies e.g.\ biautomaticity --- an important algorithmic property
not implied by CAT(0) (see \cite{LearyMinasyan}). While allowing to deduce `CAT(0)-like' behavior as in the previous paragraph, the property of being Helly can be checked by local combinatorial conditions via a local-to-global characterization from \cite{weaklymodular}, see Theorem~\ref{thm:Helly} below.


The Main Theorem also provides a higher dimensional generalization of an old result of Pride \cite{Pride1986} saying that $2$-dimensional Artin groups of type FC are C(4)-T(4). The notion of cell Helly introduced in Section~\ref{s:Helly} serves as a higher dimensional counterpart of the C(4)-T(4) small cancellation condition in the sense of Lemma~\ref{lem:C4T4}. We prove that the universal covers of the Salvetti complexes of higher dimensional Artin groups of type FC are cell Helly.

Let us note that $2$-dimensional Artin groups of type FC are known to satisfy several different versions of NPC: small cancellation \cite{Pride1986}, CAT(0) \cite{BraMcC2000}, systolic \cite{HuaOsa1,Artinmetric}, quadric \cite{Hoda-q}, and Helly.

\subsection{Consequences of the Main Theorem}
\label{s:conseq}
In the Corollary below we list immediate consequences of being Helly, for groups as in the Main Theorem.
In \cite{Helly} it is shown that the class of Helly groups is closed under operations of
free products with amalgamations over finite subgroups, graph products (in particular, direct products), quotients by finite normal subgroups, and (as an immediate consequence of the definition) taking finite
index subgroups. Moreover, it is shown there that e.g.\ (Gromov) hyperbolic groups, CAT(0) cubical groups,
uniform lattices in buildings of type $\widetilde{C}_n$, 
and graphical C(4)-T(4) small cancellation groups are all Helly. Therefore, the results listed below in the Corollary
apply to various combinations of all those classes of groups.

Except item (6) below (biautomaticity) all other results (1)--(5) are rather direct consequences 
of well known facts concerning injective metric spaces --- see explanations in 
Subsection~\ref{s:det_cor} below, and \cite{Helly} for more details. Biautomaticity of Helly groups
is one of the main results of the paper \cite{Helly}.

\begin{mainc}
Suppose that a group $G$ is either a weak Garside group of finite type or an Artin group of type FC. Then the following hold.
	\begin{enumerate}
		\item $G$ acts geometrically on an injective metric space $X_G$, and hence on a metric space
		with a convex geodesic bicombing.
		Moreover, $X_G$ is a piecewise $\ell^\infty$-polyhedron complex.
		\item $G$ admits an EZ-boundary and a Tits boundary.
		\item The Farrell-Jones conjecture with finite wreath products holds for $G$.
		\item The coarse Baum-Connes conjecture holds for $G$.
		\item The $k$-th order homological Dehn function and homotopical Dehn function of $G$ are Euclidean, i.e.\ $f(l)\preceq l^{\frac{k+1}{k}}$.
		\item $G$ is biautomatic.
	\end{enumerate}
\end{mainc}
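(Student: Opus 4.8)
The plan is to deduce all six items at once from the Main Theorem, using it only through the single assertion that a group $G$ as in the hypothesis acts geometrically by graph automorphisms on some Helly graph $\G$; every statement (1)--(6) will then be quoted from, or be a short consequence of, the structural theory of Helly groups and of groups acting on injective metric spaces developed in \cite{Helly} (which itself rests on \cite{Lang2013, descombes2016flats, DesLang2015}). So the ``proof'' is really an assembly: invoke the Main Theorem, then cite.

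For (1) I would pass from the combinatorial object $\G$ to a metric one by taking its injective hull $E(\G)$ in the sense of \cite{Lang2013}. The points to check (all available in \cite{Lang2013} and \cite{Helly}) are that $E(\G)$ is an injective metric space, that it carries a piecewise $\ell^\infty$-polyhedron structure (maximal cliques of $\G$ spanning the cells), that $\G$ is coarsely dense in $E(\G)$, and that the action of $\mathrm{Aut}(\G)$ extends to an action on $E(\G)$; combining coarse density with cocompactness of $G\curvearrowright\G$ gives that $G\curvearrowright E(\G)=:X_G$ is geometric. The convex geodesic bicombing then comes for free, since by \cite{Lang2013} every injective metric space carries a canonical (hence $G$-equivariant) convex geodesic bicombing. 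Granting (1), the items (2)--(5) are formal consequences of a geometric action on a space with convex geodesic bicombing and I would merely cite them: the EZ-boundary and Tits boundary in (2) from \cite{descombes2016flats, DesLang2015} and the discussion in \cite{Helly}; the Farrell--Jones conjecture with finite wreath products in (3), and the coarse Baum--Connes conjecture in (4), are established for all Helly groups in \cite{Helly}; and the bound $\delta^k_G(l)\preceq l^{(k+1)/k}$ in (5) follows from the isoperimetric inequalities available in spaces with a convex bicombing, as explained in Subsection~\ref{s:det_cor} and \cite{Helly}.

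The one step that is genuinely not soft is (6): biautomaticity does \emph{not} follow from acting geometrically on an injective metric space --- there exist CAT(0) groups which are not biautomatic \cite{LearyMinasyan} --- so it cannot be read off from the bicombing on $X_G$ and must instead be extracted from the combinatorial Helly structure of $\G$ itself (through a clique-based normal form/combing on $\G$). Here I would simply invoke the theorem of \cite{Helly} that every Helly group is biautomatic. I expect this to be the hardest input; in the present paper it is used as a black box, so that the only actual work needed for the Corollary is the verification in (1) that the injective hull of a Helly graph has the stated polyhedral and coarse-density properties, after which (1)--(6) all follow by combining the cited results with the Main Theorem.
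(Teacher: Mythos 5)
Your proposal is correct and follows essentially the same route as the paper: invoke the Main Theorem to get a geometric action on a Helly graph, pass to the injective hull to obtain the space with convex geodesic bicombing and deduce (1)--(5) by citing the known consequences, and quote the biautomaticity of Helly groups from the Helly-groups paper as a black box for (6), exactly as the authors do in their discussion of the Corollary. Your remark that (6) cannot be extracted from the bicombing alone and requires the combinatorial Helly structure matches the paper's own emphasis.
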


The recent work of Kleiner and Lang \cite{kleiner2018higher} provides many new insights into the asymptotic geometry of injective metric spaces. Together with (1) of the above corollary, it opens up the possibility of studying Gromov's program of quasi-isometric classification and rigidity for higher dimensional Artin groups which are not right-angled. This actually serves as one of our motivations to look for good geometric models for such groups. Previous works in this direction concern mainly the right-angled or the $2$-dimensional case \cite{behrstock2008quasi,MR2727658,MR3692971,huang2016quasi,huang2017quasi,margolis2018quasi}. 

We also present an alternative proof of the fact that the Salvetti complex for Artin groups of type FC is aspherical (see Theorem~\ref{t:cHFC}), which implies the $K(\pi,1)$-conjecture for such groups. This has been  previously established by Charney and Davis \cite{CharneyDavis} by showing contractibility of the associated Deligne complex. Our proof does not rely on the Deligne complex.  

\subsection{Details and comments on items in Corollary}
\label{s:det_cor}
\hfill

\medskip

\noindent
(1) \emph{Injective metric spaces} also known as \emph{hyperconvex metric spaces} or \emph{absolute retracts}
(in the category of metric spaces with $1$-Lipschitz maps) form an important class of metric spaces
rediscovered several times in history \cites{AroPan1956,Isbell1964,Dress1984} and studied thoroughly.
One important feature of injective metric spaces of finite dimension is that they admit a unique convex, consistent, reversible geodesic bicombing {\cite[Theorems 1.1\&1.2]{DesLang2015}}.
Recall that a \emph{geodesic bicombing} on a metric space $(X,d)$ is a map
\begin{align*}
\sigma \colon X\times X \times [ 0,1] \to X,
\end{align*}
such that for every pair $(x,y)\in X\times X$ the function $\sigma_{xy}:=\sigma(x,y,\cdot)$ is
a constant speed geodesic from $x$ to $y$. We call $\sigma$ \emph{convex} if the function 
$t\mapsto d(\sigma_{xy}(t),\sigma_{x'y'}(t))$ is convex for all $x,y,x',y'\in X$. The bicombing
$\sigma$ is \emph{consistent} if $\sigma_{pq}(\lambda)=\sigma_{xy}((1-\lambda)s+\lambda t)$, for all
$x, y \in X$, $0\leq s \leq t \leq 1$, $p := \sigma_{xy}(s)$, $q := \sigma_{xy}(t)$, and $\lambda \in [ 0, 1]$.
It is called \emph{reversible} if $\sigma_{xy}(t)=\sigma_{yx}(1-t)$ for all $x,y\in X$ and $t\in [0,1]$.

It is shown in \cite{Helly} that if a group $G$ acts geometrically on a Helly graph $\Gamma$ then it
acts geometrically on an \emph{injective hull} of $V(\Gamma)$ (set of vertices of $\Gamma$ equipped with a path metric) --- the smallest injective metric space containing isometrically $V(\Gamma)$. 
The proof is a straightforward consequence of the fact that the injective hull can be defined 
as a space of real-valued functions, the so-called \emph{metric forms} \cite{Isbell1964,Dress1984,Lang2013},
and similarly the smallest Helly graph containing isometrically a given graph --- the \emph{Helly hull} ---
can be defined analogously using integer metric forms (see e.g.\ \cite[Section II.3]{JPM}). 
It follows that $G$ acts on a space with convex geodesic bicombing.
Another example of a space with convex geodesic bicombing is a CAT(0) space with a combing consisting of
all (unit speed) geodesics. As noted in Subsection~\ref{s:main} only in very particular cases
it is possible to construct a CAT(0) space acted upon geometrically by an Artin group. We establish the
existence of a slightly weaker structure in much greater generality.

Holt \cite{Holt2010} proves falsification by fellow traveller for Garside groups. It follows then from 
\cite{Lang2013} that such groups act properly (not necessarily cocompactly) on injective metric spaces.
Garside groups have Garside normal forms, which can be thought as a combinatorial combing (not in the sense above). However, even coarsely this combing is not convex --- see some further discussion in (4) below.

The properties considered in (2)--(5) below are immediate consequences of acting geometrically on a space
with convex geodesic bicombing.

\medskip

\noindent
(2) Similarly to a CAT(0) space, an injective metric space (or, more generally, a space with convex geodesic bicombing) has a boundary at infinity, with two different topologies: the cone topology and the Tits topology \cite{DesLang2015,kleiner2018higher}. The former gives rise to an \emph{EZ-boundary} (cf. \cite{Bestvina1999,FaLa2005}) of $G$ \cite{DesLang2015}. The topology of an EZ-boundary reflects some algebraic properties of the group, and its existence implies e.g.\ the Novikov conjecture. The Tits topology is finer and provides subtle quasi-isometric invariants of $G$ \cite{kleiner2018higher}. It is observed in \cite{Helly} that Helly groups admit EZ-boundaries.


\medskip

\noindent
(3) For a discrete group $G$ the \emph{Farrell-Jones conjecture} asserts that the $K$-theoretic (or $L$-theoretic) \emph{assembly map}, that is a homomorphism from a homology group of the classifying  space   
$E_{\mathcal{VCY}}(G)$ to a $K$-group (or $L$-group) of a group ring, is an isomorphism (see e.g.\ \cite{FarRou2000,BarLuc2012,KasRup2017,Roushon2018} for details). We say that $G$ satisfies the \emph{Farrell-Jones	conjecture with finite wreath products} if for any finite group $F$ the wreath product $G\wr F$ satisfies the Farrell-Jones conjecture.
It is proved in \cite{KasRup2017} that groups acting on spaces with convex geodesic bicombing satisfy
the Farrell-Jones conjecture with finite wreath products. 

The Farrell-Jones conjecture implies, for example, the Novikov Conjecture, and the Borel Conjecture in high dimensions. 
Farrell-Roushon \cite{FarRou2000} established the Farrell-Jones conjecture for braid groups, Bartels-L{\"u}ck \cite{BarLuc2012} and Wegner \cite{Wegner2012} proved it for CAT(0) groups (see Subsection~\ref{s:main} for examples of CAT(0) Artin groups). Recently, Roushon~\cite{Roushon2018} established the Farrell-Jones conjecture
for some (but not all) spherical and Euclidean Artin groups. His proof relies on observing that some Artin groups fit into appropriate
exact sequences. After completion of the first version of the current paper Br{\"u}ck-Kielak-Wu \cite{XWu} 
presented another proof of the Farrell-Jones conjecture with finite wreath products for a subclass of the class of FC type Artin groups --- for even Artin groups of type FC.
\medskip

\noindent
(4) For a metric space $X$ the \emph{coarse assembly map} is a homomorphism from the coarse $K$-homology of $X$ to the $K$-theory of the Roe-algebra of $X$. The space $X$ satisfies the \emph{coarse Baum-Connes conjecture} if the coarse assembly map is an isomorphism. A finitely generated group $\Gamma$ satisfies the {coarse Baum-Connes conjecture} if the conjecture holds for $\Gamma$ seen as a metric space with the word metric given by a finite
generating set. Equivalently, the conjecture holds for $\Gamma$ if a metric space (equivalently: every metric space) acted geometrically upon by $\Gamma$ satisfies the conjecture. As observed in \cite{Helly}, it follows from the work of
Fukaya-Oguni \cite{FukayaOguni2019} that Helly groups satisfy the coarse Baum-Connes conjecture.

For CAT(0) groups the coarse Baum-Connes conjecture holds by \cite{HigRoe1995}. Groups with finite asymptotic dimension satisfy the coarse Baum-Connes conjecture. It is shown in \cite{BelFuj2008} that braid groups as well as spherical Artin groups $A_n,C_n$, and Artin groups of affine type $\widetilde{A}_n,\widetilde{C}_n$
have finite asymptotic dimension.

For proving the coarse Baum-Connes conjecture in \cite{FukayaOguni2019} a coarse version of a convex bicombing is used. Essential there is that two geodesics in the combing with common endpoints
`converge' at least linearly. This is not true for the usual combing in Garside groups. For example, such a combing for $\mathbb{Z}^2$ between points
$(n,n)$ and $(0,0)$ follows roughly the diagonal. Similarly the combing line between $(n+k,n)$ and
$(0,0)$ follows roughly a diagonal for $n\gg k$. Hence, for large $n$, the two lines stay at
roughly constant distance for a long time --- this behavior is different from what is required.   

\medskip

\noindent
(5) Higher order homological (resp.\ homotopical) Dehn functions measure the difficulty of filling higher dimensional cycles (resp.\ spheres) by chains (resp.\ balls), see \cite[Section 2]{abrams2013pushing,behrstock2017combinatorial} for backgrounds. The homological case of (5) is a consequence of (1) and \cite{wenger2005isoperimetric}, see the discussion in \cite[Section 2]{abrams2013pushing} for more details. The homotopical case of (5) follows from (1) and \cite{behrstock2017combinatorial}. The Garside case of (5) has been known before as it can be deduced from the biautomaticity and \cite{wenger2005isoperimetric,behrstock2017combinatorial}. The Artin case of (5) is new.

%

\medskip

\noindent
(6) Biautomaticity is an important algorithmic property of a group. It implies, among others, that the Dehn
function is at most quadratic, and that the Word Problem and the Conjugacy Problem are solvable; see e.g.\ \cite{Epsteinetal1992}. It is proved in \cite{Helly} that all Helly groups are biautomatic.

Biautomaticity of all FC-type Artin groups is new (and so is the solution of the Conjugacy Problem and bounding the Dehn function for these Artin groups). 
Altobelli \cite{Alto1998} showed that FC-type Artin groups are asynchronously automatic, and hence have solvable Word Problem. 
Biautomaticity for few classes of Artin groups was
shown before by: Pride together with Gersten-Short (triangle-free Artin groups) \cite{Pride1986,GesSho1990}, Charney \cite{Charney1992} (spherical), Peifer \cite{Peifer1996} (extra-large type), Brady-Mc\-Cam\-mond \cite{BraMcC2000} (three-generator large-type Artin groups and some generalizations), Huang-Osajda \cite{HuaOsa1} (almost large-type). 
Weak Garside groups of finite type are biautomatic by \cite{Charney1992,Deho2002}.

\subsection{Sketch of the proof of the main result}
Here we only discuss the special case of braid groups. Each braid group is the fundamental group of its Salvetti complex $X$. The universal cover $\widetilde X$ admits a natural cellulation by Coxeter cells. The key point is that the combinatorics of how these Coxeter cells in $\widetilde X$ intersect each other follows a special pattern which is a reminiscent of $CAT(0)$ cube complexes (but much more general). We formulate such pattern as the definition of cell Helly complex, see Definition~\ref{d:cellhelly} and Remark~\ref{rmk:no corner}. By the local-to-global characterization of Helly graphs established in \cite{weaklymodular}, the proof of the main theorem reduces to showing that $\widetilde X$ is cell Helly, and showing that the cell Hellyness can be further reduced to a calculation using Garside theory. 
\subsection{Organization of the paper}
In Section~\ref{s:preliminary}, we provide some background on Coxeter groups and Artin groups. In Section~\ref{s:Helly}, we discuss Helly graphs, introduce the notion of cell Helly complex, and present some natural examples. In Section~\ref{s:Garside}, we prove the Garside case of the Main Theorem and in Section~\ref{s:ArtinH} we deal with the Artin group case.

\subsection{Acknowledgments} 
J.H.\ thanks the Max-Planck Institute of Mathematics (Bonn), where part of this work was done, for its hospitality. J.H.\ also thanks R.\ Charney for helpful discussions on Garside groups. 
We would like to thank V.\ Chepoi, T.\ Fukaya, and U.\ Lang for comments on earlier
versions of the paper.
Part of the work on the paper was carried out while D.O.\ was visiting McGill University.
We would like to thank the Department of Mathematics and Statistics of McGill University
for its hospitality during that stay.
D.O.\ was partially supported by (Polish) Narodowe Centrum Nauki, UMO-2017/25/B/ST1/01335.
We thank the organizers of the conference ``Nonpositively Curved Groups in the Mediterranean", Nahsholim 2017,
for inviting us. A part of the work on the paper was carried out there.
This work was partially supported by the grant 346300 for IMPAN from the Simons Foundation and the matching 2015-2019 Polish MNiSW fund.

\section{Preliminaries}
\label{s:preliminary}
All graphs in this article are \emph{simplicial}, that is, they do not contain loops or multiple edges.
For a connected graph we equip the set of its vertices with a path metric: 
the distance of two vertices is the length (the number of edges) of the shortest edge-path between those vertices. A \emph{full subgraph} of a graph $\Gamma$ is a subgraph $\Gamma'$ such that two vertices are adjacent in $\Gamma'$ if and only if they are adjacent in $\Gamma$.

\subsection{Lattices}
A standard reference for the lattice theory is \cite{Birkhoff1967}.
Let $(P,\leq)$ be a partially ordered set (poset), and let $L$ be an arbitrary subset. An element $u\in P$ is an \emph{upper bound} of $L$ if $s\le u$ for each $s\in L$. An upper bound $u$ of $L$ is its \emph{least upper bound}, denoted by $\bigvee L$, if $u\le x$ for each upper bound $x$ of $L$. Similarly, $v\in P$ is a \emph{lower bound} of $L$ if $v\le s$ for each $s\in L$. A lower bound $v$ of $L$ is its \emph{greatest lower bound}, denoted by $\bigwedge L$, if $x \le v$ for each lower bound $x$ of $L$. Given $a,b\in P$, the \emph{interval} between $a$ and $b$, denoted $[a,b]$, is the collection of all elements $x\in P$ satisfying $a\le x\le b$.

A poset $(P,\leq)$ is a \emph{lattice} if each two-element subset $\{a,b\}\subset P$ has a least upper bound --- \emph{join}, denoted by $a\vee b$ --- and a greatest lower bound --- \emph{meet}, denoted by $a\wedge b$. 

\begin{lemma}[{\cite[Theorem IV.8.15]{Birkhoff1967}}]
	\label{lem:lattice}
	Suppose $(P,\leq)$ is a lattice. Then
	\begin{enumerate}
		\item if two intervals $[a,b]$ and $[c,d]$ of $P$ have nonempty intersection, then $[a,b]\cap [c,d]=[a\vee c,b\wedge d]$;
		\item if elements of a finite collection $\{[a_i,b_i]\}_{i=1}^n$ of intervals in $P$ pairwise intersect, then the intersection of all members in the collection is a non-empty interval $\bigcap_{i=1}^n [a_i,b_1]=[\bigvee \{a_i\}, \bigwedge \{b_i\}]$.
	\end{enumerate}
\end{lemma}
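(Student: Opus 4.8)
The plan is to deduce both parts from the universal properties of $\vee$ and $\wedge$ as least upper bound and greatest lower bound, after observing that the asserted set equalities hold purely formally; the genuine content is the nonemptiness of the right-hand sides, and that is where the pairwise-intersection hypothesis enters.

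For (1), I would first check $[a,b]\cap[c,d]\subseteq[a\vee c,b\wedge d]$: any $x$ in the intersection satisfies $a\le x$ and $c\le x$, hence $a\vee c\le x$ because $a\vee c$ is the least upper bound of $\{a,c\}$; dually $x\le b$ and $x\le d$ give $x\le b\wedge d$. For the reverse inclusion, if $a\vee c\le x\le b\wedge d$ then $a\le a\vee c\le x\le b\wedge d\le b$, so $x\in[a,b]$, and symmetrically $x\in[c,d]$. This yields the equality as sets regardless of any hypothesis; the assumption $[a,b]\cap[c,d]\ne\emptyset$ is used only to guarantee $a\vee c\le b\wedge d$, i.e.\ that $[a\vee c,b\wedge d]$ is a genuine (nonempty) interval.

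For (2), I would first recall that in a lattice every finite subset has a join and a meet (by associativity of $\vee$ and $\wedge$ and induction), so $\bigvee\{a_i\}$ and $\bigwedge\{b_i\}$ are well defined since the collection is finite. The identity $\bigcap_{i=1}^n[a_i,b_i]=[\bigvee\{a_i\},\bigwedge\{b_i\}]$ then follows exactly as in (1): one inclusion from the defining property of join and meet applied to all indices simultaneously, the other from $a_i\le\bigvee\{a_j\}\le x\le\bigwedge\{b_j\}\le b_i$. The crux is that this interval is nonempty, equivalently $\bigvee\{a_i\}\le\bigwedge\{b_i\}$, equivalently $a_i\le b_j$ for all $i,j$. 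I would extract this from pairwise intersection via part (1): since $[a_i,b_i]\cap[a_j,b_j]\ne\emptyset$, part (1) gives $a_i\vee a_j\le b_i\wedge b_j$, hence $a_i\le a_i\vee a_j\le b_i\wedge b_j\le b_j$. Feeding this back, $\bigwedge\{b_j\}$ is an upper bound of $\{a_i\}$, so $\bigvee\{a_i\}\le\bigwedge\{b_j\}$, as required. (Alternatively, one can induct on $n$, merging $[a_1,b_1],\dots,[a_{n-1},b_{n-1}]$ into a single interval by the inductive hypothesis and then intersecting with $[a_n,b_n]$ using part (1), after checking the merged interval meets $[a_n,b_n]$ by the same pairwise inequalities.)

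I do not expect a real obstacle here; the only point to handle with care is to keep separate the statement ``the equation holds as sets'', which is formal, from ``the right-hand side is nonempty'', which is the Helly-type conclusion and is exactly what the pairwise-intersection hypothesis provides through the two-interval case.
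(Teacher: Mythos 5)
Your proof is correct: both set equalities follow formally from the universal properties of join and meet, and the pairwise-intersection hypothesis is used exactly where it should be, namely to get $a_i\le b_j$ for all $i,j$ and hence $\bigvee\{a_i\}\le\bigwedge\{b_i\}$. The paper gives no proof of this lemma (it simply cites Birkhoff), and your argument is the standard one, so there is nothing to reconcile; note only that the subscript $b_1$ in the paper's displayed intersection is a typo for $b_i$, which you interpreted correctly.
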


\subsection{Artin groups and Coxeter groups}
\label{subsec:Artin def}
Let $\Gamma$ be a finite simple graph with each of its edges labeled by an integer $\ge 2$. Let $V\Gamma$ be the vertex set of $\Gamma$. The \emph{Artin group with defining graph $\Gamma$}, denoted $A_{\Gamma}$, is given by the following presentation:
\begin{center}
	$\langle s_i\in V\Gamma\ |\ \underbrace{s_is_js_i\cdots}_{m_{ij}}=\underbrace{s_js_is_j\cdots}_{m_{ij}}$ for each $s_i$ and $s_j$ spanning an edge labeled by $m_{ij}\rangle$.
\end{center}
The \emph{Artin monoid with defining graph $\Gamma$}, denoted $A^+_\Gamma$ is the monoid with the same presentation. The natural map $A^+_\Gamma\to A_\Gamma$ is injective \cite{paris2002artin}.

The \emph{Coxeter group with defining graph $\Gamma$}, denoted $W_\Gamma$, is given by the following presentation:
\begin{center}
	$\langle s_i\in V\Gamma\ |\ s^2_i=1$ for any $s_i\in V\Gamma$, $ (s_is_j)^{m_{ij}}=1$ for each $s_i$ and $s_j$ spanning an edge labeled by $m_{ij}\rangle$.
\end{center}
Note that there is a quotient map $A_\Gamma\to W_\Gamma$ whose kernel is normally generated by the squares of generators of $A_\Gamma$.

\begin{theorem}
	\label{thm:lek}
	Let $\Gamma_1$ and $\Gamma_2$ be full subgraphs of $\Gamma$ with the induced edge labelings. Then
	\begin{enumerate}
		\item the natural homomorphisms $A_{\Gamma_1}\to A_\Gamma$ and $W_{\Gamma_1}\to W_\Gamma$ are injective;
		\item $A_{\Gamma_1}\cap A_{\Gamma_2}=A_{\Gamma_1\cap\Gamma_2}$ and $W_{\Gamma_1}\cap W_{\Gamma_2}=W_{\Gamma_1\cap\Gamma_2}$.
	\end{enumerate} 
\end{theorem}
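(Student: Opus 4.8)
The plan is to reduce both statements to the corresponding facts about Artin \emph{monoids} together with the analogous facts for Coxeter groups, and then transport them along the injection $A^+_\Gamma \hookrightarrow A_\Gamma$ (and the defining presentation $W_\Gamma$). For the Coxeter side, statement (1) is the classical result of Bourbaki/Tits that a parabolic subgroup $W_{\Gamma_1}$ of $W_\Gamma$ is isomorphic to the Coxeter group on the induced labelling, and statement (2) for Coxeter groups is the standard fact $W_{\Gamma_1}\cap W_{\Gamma_2}=W_{\Gamma_1\cap\Gamma_2}$ for parabolics, provable via the deletion condition or via the geometric representation. (The excerpt writes $C_\Gamma$ where $W_\Gamma$ is presumably meant; I would state and use it as $W_\Gamma$.) These I would simply cite.

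For the Artin side, the key input is van der Lek's thesis (or equivalently Paris, Godelle), which is precisely the assertion that standard parabolic subgroups of Artin groups are themselves the expected Artin groups, giving (1), and that $A_{\Gamma_1}\cap A_{\Gamma_2}=A_{\Gamma_1\cap\Gamma_2}$, giving (2). So at the crudest level the theorem is a citation. If instead one wants a self-contained argument, I would proceed as follows. First, (1) for monoids: the map $A^+_{\Gamma_1}\to A^+_\Gamma$ is injective because the Artin monoid embeds in the Artin group (cited above as Paris) and one can detect the sub-monoid by the support of normal forms, or more directly because $A^+_{\Gamma_1}$ is a retract — there is a monoid homomorphism $A^+_\Gamma\to A^+_{\Gamma_1}$ killing the generators outside $V\Gamma_1$ and fixing the rest (well-defined since every defining relation either lies entirely in $\Gamma_1$ or is sent to a relation of the same shape, using that deleting a generator from a braid relation $s_is_j\cdots=s_js_i\cdots$ collapses both sides to a power of the surviving generator). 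A retract of an embedding is an embedding, but here we need the group statement; so instead I would invoke that $A^+_\Gamma\hookrightarrow A_\Gamma$ and that every element of $A_\Gamma$ is a fraction $\alpha\beta^{-1}$ of monoid elements — no, this fails in general FC-type — so the cleanest route really is to cite van der Lek for (1).

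Granting (1), statement (2) follows from a lattice-theoretic argument in the positive monoid when $\Gamma$ is spherical, using the Garside structure: in a Garside monoid the left-divisibility poset is a lattice, standard parabolic submonoids correspond to intervals (or to convex sub-lattices generated by subsets of atoms), and Lemma~\ref{lem:lattice} then gives that the intersection of two such is again of the expected form; one reads off $\Delta_{\Gamma_1}\wedge\Delta_{\Gamma_2}=\Delta_{\Gamma_1\cap\Gamma_2}$ from the defining property of the Garside element. The general (non-spherical) case is then bootstrapped from the spherical case by noting that any element of $A_{\Gamma_1}\cap A_{\Gamma_2}$ has finitely many letters, hence lies in $A_{\Gamma'}$ for some finite $\Gamma'\subseteq\Gamma$ — but that does not immediately reduce to spherical type, so again one falls back on van der Lek.

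The main obstacle is therefore not the Coxeter half (routine) but the fact that there is no elementary proof of the Artin half: injectivity of standard parabolics in arbitrary Artin groups is a genuinely hard theorem (van der Lek), and the intersection statement (2) is of comparable depth. In a paper of this kind I would present the theorem as cited from van der Lek's thesis and Bourbaki, devoting at most a sentence to the retraction observation as motivation for why (1) is plausible, and move on.
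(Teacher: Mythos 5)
Your proposal lands exactly where the paper does: the paper offers no argument for this theorem, simply citing standard references (Bourbaki, Davis) for the Coxeter case and van der Lek's thesis for the Artin case, which are precisely the citations you settle on (and yes, $C_\Gamma$ here is just the Coxeter group $W_\Gamma$). The partial self-contained routes you sketch and then correctly discard are not attempted in the paper either, so your treatment is essentially the same as the paper's.
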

The Coxeter case of this theorem is standard, see e.g. \cite{bourbaki2002lie} or \cite{dbook}, the Artin case of this theorem is proved in \cite{lek}.

A \emph{standard parabolic subgroup} of an Artin group or a Coxeter group is a subgroup generated by a subset of its generating set. 

\begin{theorem}\cite{MR3291260}
	\label{thm:convexity}
	Let $\Gamma'\subset \Gamma$ be a full subgraph. Let $X'\to X$ be the embedding from the Cayley graph of $A_{\Gamma'}$ (with respect to the standard presentation) to the Cayley graph of $A_\Gamma$ induced by the monomorphism $A_{\Gamma'}\to A_\Gamma$. Then $X'$ is convex in $X$, i.e.\ any geodesic of $X$ between two vertices of $X'$ is contained in $X'$.
\end{theorem}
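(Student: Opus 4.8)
I would organize the proof around the visual splitting of $A_\Gamma$ obtained by deleting a vertex, after a standard reduction.

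First I would reduce to removing a single vertex. Convexity of one full subgraph inside another is transitive, and $\Gamma'$ is obtained from $\Gamma$ by deleting vertices one at a time, so it suffices to treat the case $\Gamma'=\Gamma\setminus\{t\}$. After translating by an element of $A_\Gamma$, what must be shown is that a geodesic word $w$ over $V\Gamma^{\pm1}$ representing an element of $A_{\Gamma'}$ contains no letter $t^{\pm1}$. If it did, I would delete the prefix of $w$ preceding its first $t^{\pm1}$ and the suffix following its last $t^{\pm1}$ --- both are words over $V\Gamma'^{\pm1}$, and together with the element represented by $w$ they lie in $A_{\Gamma'}$ --- and since subwords of geodesics are geodesics, I would be left with a geodesic word
\[
v \;=\; t^{e_1}\,p_1\,t^{e_2}\,p_2\cdots p_{k-1}\,t^{e_k},\qquad k\ge 1,\ e_i\ne 0,
\]
with each $p_i$ a word over $V\Gamma'^{\pm1}$, representing an element of $A_{\Gamma'}$. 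The task is to derive a contradiction, and I would run the argument inside an induction arranged so that convexity of suitably smaller standard parabolics of $A_\Gamma$ is already available.

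The main tool is the splitting of $A_\Gamma$ along $t$. Every edge of $\Gamma$ lies in $\Gamma'$ or in $\mathrm{st}(t):=\{t\}\cup\mathrm{lk}(t)$, and $\Gamma'\cap\mathrm{st}(t)=\mathrm{lk}(t)$, so comparing presentations identifies $A_\Gamma$ with the amalgam $A_{\Gamma'}\ast_{A_{\mathrm{lk}(t)}}A_{\mathrm{st}(t)}$; the amalgamated inclusions are injective by Theorem~\ref{thm:lek}(1), so there is an honest Bass--Serre tree $T$ with a base vertex $x_0$ whose stabilizer is $A_{\Gamma'}$ and an adjacent vertex $y_0$ whose stabilizer is $A_{\mathrm{st}(t)}$. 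Reading $v$ from the left and recording the orbit point $g_ix_0$, where $g_i$ is the $i$-th partial product, gives a walk in $T$ that is stationary along every $V\Gamma'^{\pm1}$-letter and that, while reading a syllable $t^{e_i}$, oscillates among the $|e_i|+1$ neighbours $t^ax_0$ of one fixed vertex in the orbit of $y_0$; these neighbours are distinct, and the walk genuinely moves during each syllable, because $\langle t\rangle\cap A_{\Gamma'}=\{1\}$ by Theorem~\ref{thm:lek}(2) (in particular $t\notin A_{\Gamma'}$). Replacing each syllable's oscillation by the length-$2$ geodesic in $T$ between its endpoints and using that the element represented by $v$ fixes $x_0$, I obtain a closed walk $x_0=u_0,m_1,u_1,\ldots,m_k,u_k=x_0$ of length $2k$ with $u_{i-1}\ne u_i$ for every $i$ (since $e_i\ne 0$). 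A nontrivial closed walk in a tree must backtrack; it cannot backtrack at an $m_i$ (that would force $u_{i-1}=u_i$), so it backtracks at some $u_j$, i.e.\ $m_j=m_{j+1}$ with indices read cyclically. Unwinding this equality and using $A_{\Gamma'}\cap A_{\mathrm{st}(t)}=A_{\mathrm{lk}(t)}$ from Theorem~\ref{thm:lek}(2), one of the following must hold: (a) some interior syllable $p_j$ lies in $A_{\mathrm{lk}(t)}$; or (b) $v$ represents an element of $A_{\mathrm{st}(t)}$, hence of $A_{\mathrm{lk}(t)}$.

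The easy cases are $k=1$, where $v=t^{e_1}$ would represent $t^{e_1}\notin A_{\Gamma'}$, and case (b), where $v$ is a geodesic representing an element of the strictly smaller parabolic $A_{\mathrm{lk}(t)}$ and so, by the inductive hypothesis, cannot involve $t^{\pm1}$ --- both contradictions. Case (a) is the crux and the main obstacle. By the inductive convexity of $A_{\mathrm{lk}(t)}$, the $p_j$-subword of $v$ is already a word over $V\mathrm{lk}(t)^{\pm1}$, so the subword reading $t^{e_j}p_jt^{e_{j+1}}$ is a geodesic word over $V\mathrm{st}(t)^{\pm1}$ of the special shape ``a run of $t$'s, then a word over $V\mathrm{lk}(t)^{\pm1}$, then a run of $t$'s''; I would want to rewrite it, inside $A_{\mathrm{st}(t)}$ and without increasing its length, into a word with strictly fewer $t$-syllables, contradicting a minimal choice of $v$. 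Making this step work requires a real understanding of geodesics in the ``star'' subgroup $A_{\mathrm{st}(t)}$ and of how the cyclic subgroup $\langle t\rangle$ sits relative to the parabolic $A_{\mathrm{lk}(t)}$ there --- already nontrivial for dihedral Artin groups, where words such as $t^as^bt^c$ can be geodesic --- and it requires an induction scheme robust enough to also cover the degenerate configuration $\mathrm{st}(t)=\Gamma$, in which the splitting above is vacuous. This analysis and bookkeeping is where the real work lies; the reduction and the tree argument are routine by comparison.
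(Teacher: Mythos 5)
Note first that the paper does not prove this theorem: it is quoted from Charney--Paris \cite{MR3291260}, so there is no internal argument to compare with, and your attempt has to stand on its own. The parts you actually carry out are fine: the reduction to $\Gamma'=\Gamma\setminus\{t\}$ via transitivity, the identification $A_\Gamma\cong A_{\Gamma'}\ast_{A_{\mathrm{lk}(t)}}A_{\mathrm{st}(t)}$ (legitimate by comparing presentations and Theorem~\ref{thm:lek}), and the Bass--Serre/normal-form argument showing that for a geodesic $v=t^{e_1}p_1\cdots p_{k-1}t^{e_k}$ representing an element of $A_{\Gamma'}$ either $k=1$, or $v$ lies in $A_{\mathrm{st}(t)}$, or some syllable $p_j$ lies in $A_{\mathrm{lk}(t)}$.

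Everything after that, which you yourself label as ``where the real work lies,'' is missing, and it is precisely the content of the theorem, so this is a genuine gap rather than a routine omission. Concretely: (i) the induction scheme is never pinned down and does not deliver what you invoke --- to conclude that the subword $p_j$ is spelled in $\mathrm{lk}(t)$-letters you need convexity of $A_{\mathrm{lk}(t)}$ with respect to geodesics of the ambient $A_\Gamma$, which is neither the codimension-one case you reduced to nor available from induction on $|V\Gamma|$ (that would only give convexity for the intrinsic geodesics of $A_{\Gamma'}$ or $A_{\mathrm{st}(t)}$); (ii) the proposed rewriting of $t^{e_j}p_jt^{e_{j+1}}$ into an equal-length word with fewer $t$-syllables is not a valid principle in $A_{\mathrm{st}(t)}$: already in $\langle s,t\mid sts=tst\rangle$ the word $tsst$ is geodesic, has the shape you describe with $p_j=s^2\in A_{\mathrm{lk}(t)}$, and admits no equal-length spelling with fewer $t$-syllables (any length-$4$ representative must be positive by the length homomorphism, and no braid relation applies), so this step needs a genuinely different idea exploiting that $v$ globally represents an element of $A_{\Gamma'}$; (iii) the configuration $\mathrm{st}(t)=\Gamma$, where the splitting is vacuous, is not a degenerate nuisance but a core case --- it includes all spherical Artin groups, e.g.\ braid groups, for which convexity of standard parabolics is itself a substantial part of Charney--Paris's theorem --- and your outline offers no argument there at all. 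In short, you have a correct preliminary reduction and an accurate diagnosis of the difficulty, but not a proof; the cited proof of Charney and Paris is organized quite differently and, in particular, treats the spherical case uniformly rather than via a splitting.
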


Given $x,y\in A_\Gamma$, we define $x\preccurlyeq y$ (resp.\ $y\succcurlyeq x$) if $y=xz$ (resp.\ $y=zx$) for some $z$ in the Artin monoid $A^+_\Gamma$. By considering the length homomorphism $A_\Gamma\to \mathbb Z$ which sends each generator of $A_\Gamma$ to $1$, one deduces that $\preccurlyeq$ and $\succcurlyeq$ are both partial orders on $A_\Gamma$. They are called the \emph{prefix order} and the \emph{suffix order}, respectively. The prefix order is invariant under left action of $A_\Gamma$ on itself.

Now we recall the \emph{weak order} on a Coxeter group $W_\Gamma$. Each element $w\in W_\Gamma$ can be expressed as a word in the free monoid on the generating set of $W_\Gamma$. One shortest such expression (with respect to the word norm) is called a \emph{reduced expression} of $w$. Given $x,y\in W_\Gamma$, define $x\preccurlyeq y$ (resp.\ $y\succcurlyeq x$), if some reduced expression of $y$ has its prefix (resp.\ suffix) being a reduced expression of $x$. Note that $\preccurlyeq$ and $\succcurlyeq$ give two partial orders on $W_\Gamma$, called the \emph{right weak order} and the \emph{left weak order}, respectively.

The defining graph $\Gamma$ is \emph{spherical} if $W_\Gamma$ is a finite group, in this case, we also say the Artin group $A_\Gamma$ is \emph{spherical}. An Artin group is of \emph{type FC} if any complete subgraph $\Gamma'\subset\Gamma$ is spherical.
\begin{theorem} 
	\label{thm:lattice}
	Suppose $\Gamma$ is spherical. Then
	\begin{enumerate}
		\item $(W_\Gamma,\preccurlyeq)$ and $(W_\Gamma,\succcurlyeq)$ are both lattices;
		\item $(A_\Gamma,\preccurlyeq)$ and $(A_\Gamma,\succcurlyeq)$ are both lattices.
	\end{enumerate}
\end{theorem}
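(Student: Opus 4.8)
The plan is to deduce both statements from the classical fact that a finite Coxeter group, ordered by the weak order, is a lattice, together with the well-known Garside-theoretic correspondence between divisors of the fundamental element $\Delta$ in a spherical Artin monoid and elements of the associated Coxeter group. First I would recall that for $\Gamma$ spherical the Coxeter group $W_\Gamma$ is finite, so it has a longest element $w_0$; the right weak order $(W_\Gamma,\preccurlyeq)$ then has $1$ as its minimum and $w_0$ as its maximum. The key classical input is that $(W_\Gamma,\preccurlyeq)$ is a lattice --- this is standard, proved by Björner (see Björner--Brenti, \emph{Combinatorics of Coxeter groups}), the point being that $u\preccurlyeq w$ iff the inversion set (left descent set closure) of $u$ is contained in that of $w$, so joins and meets can be read off from inversion sets; finiteness guarantees the join exists since $w_0$ is an upper bound for everything. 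The left weak order is handled by applying the anti-automorphism $w\mapsto w^{-1}$, which exchanges the two weak orders. This gives (1).

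For (2), the strategy is to transport the lattice structure on $(A_\Gamma,\preccurlyeq)$ to the interval structure of $W_\Gamma$ via the standard Garside normal form. Recall that for $\Gamma$ spherical, $A^+_\Gamma$ carries a Garside structure with Garside element $\Delta$, whose set of left-divisors (the \emph{simple elements}) is in order-preserving bijection with $(W_\Gamma,\preccurlyeq)$ via the section $W_\Gamma\to A^+_\Gamma$ sending a reduced word to the corresponding positive word. The crucial lattice facts from Garside theory are: any two elements of $A^+_\Gamma$ admit a left-gcd and, if they have a common right-multiple, a left-lcm; and for simple elements $s,t$ the left-lcm $s\vee t$ again divides $\Delta$ (equivalently, $W_\Gamma$ is a lattice, which is exactly (1)). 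Now for arbitrary $x,y\in A_\Gamma$ with respect to the prefix order $\preccurlyeq$: since $\preccurlyeq$ is left-invariant, translating by $(xy^{-1}$-type elements or, more carefully, multiplying $x$ and $y$ on the left by a high power of $\Delta$ --- which is central in $A^+_\Gamma$ up to the standard automorphism and acts cofinally) reduces to the case $x,y\in A^+_\Gamma$; there the left-gcd always exists giving $x\wedge y$, and to get $x\vee y$ one observes $x,y\preccurlyeq x\Delta^N$ for $N$ large (every element of $A^+_\Gamma$ of bounded length divides a power of $\Delta$), so $x$ and $y$ have a common right-multiple, hence a left-lcm. Finiteness of the generating set and sphericity are what make "$N$ large enough" uniform. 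The suffix order $\succcurlyeq$ is dual and follows by applying the anti-automorphism of $A_\Gamma$ reversing words (which preserves $A^+_\Gamma$ and swaps $\preccurlyeq$ with $\succcurlyeq$).

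The main obstacle I anticipate is the reduction from arbitrary group elements $x,y\in A_\Gamma$ to positive elements: $\preccurlyeq$ is a partial order on all of $A_\Gamma$, not just on $A^+_\Gamma$, and one must check that meets and joins computed after clearing denominators do not depend on the choice of denominator, and that they genuinely satisfy the universal property back in $A_\Gamma$. This is where one needs the embedding $A^+_\Gamma\hookrightarrow A_\Gamma$ (cited in the excerpt from \cite{paris2002artin}), the centrality properties of $\Delta^2$, and the fact that left-multiplication by a fixed element is an order-isomorphism of $(A_\Gamma,\preccurlyeq)$ onto itself. Once these bookkeeping points are in place, the lattice axioms transfer cleanly. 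I would also remark that (2) for $\succcurlyeq$ can alternatively be obtained from (2) for $\preccurlyeq$ via the Garside anti-automorphism without redoing the argument, and that (1) is in fact a special case of the simple-element calculation inside (2), so logically one could present (2) first and derive (1); I would keep them in the stated order for readability.
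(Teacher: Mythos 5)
Your proposal is correct and follows essentially the same route as the paper, which does not spell out an argument but simply cites the standard sources: Bj\"orner's lattice theorem for the weak order on a finite Coxeter group for part (1), and the Brieskorn--Saito/Deligne Garside structure on spherical Artin monoids (gcds, lcms, and reduction from $A_\Gamma$ to $A^+_\Gamma$ by clearing denominators with powers of $\Delta$) for part (2). Your sketch is an accurate reconstruction of those classical proofs, so no genuinely different approach or gap is involved.
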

The first assertion is standard, see e.g.\ \cite{bjorner2006combinatorics}. The second assertion follows from \cite{brieskorn1972artin,Deligne}. 

\subsection{Davis complexes and oriented Coxeter cells}
\label{subsec:complex}
By a cell, we always mean a closed cell unless otherwise specified.

\begin{definition}[Davis complex]
	Given a Coxeter group $W_\Gamma$, let $\mathcal{P}$ be the poset of left cosets of spherical standard parabolic subgroups in $W_\Gamma$ (with respect to inclusion) and let $\Delta_\Gamma$ be the geometric realization of this poset (i.e.\ $\Delta_\Gamma$ is a simplicial complex whose simplices correspond to chains in $\mathcal{P}$). Now we modify the cell structure on $\Delta_\Gamma$ to define a new complex $D_\Gamma$, called the \emph{Davis complex}. The cells in $D_\Gamma$ are full subcomplexes of $\Delta_\Gamma$ spanned by a given vertex $v$ and all other vertices which are $\le v$ (note that vertices of $\Delta_\Gamma$ correspond to elements in $\mathcal{P}$, hence inherit the partial order).
\end{definition}

Suppose $W_\Gamma$ is finite with $n$ generators. Then there is a canonical faithful orthogonal action of $W_\Gamma$ on the Euclidean space $\mathbb E^n$. Take a point in $\mathbb E^n$ with trivial stabilizer, then the convex hull of the orbit of this point under the $W_\Gamma$ action (with its natural cell structure) is isomorphic to $D_\Gamma$. In such case, we call $D_\Gamma$ a \emph{Coxeter cell}. 

In general, the 1-skeleton of $D_\Gamma$ is the unoriented Cayley graph of $W_\Gamma$ (i.e.\ we start with the usual Cayley graph and identify the double edges arising from $s^2_i$ as single edges), and $D_\Gamma$ can be constructed from the unoriented Cayley graph by filling Coxeter cells in a natural way. Each edge of $D_\Gamma$ is labeled by a generator of $W_\Gamma$. We endow $D^{(1)}_\Gamma$ with the path metric.

We can identify the unoriented Cayley graph of $W_\Gamma$ with the Hasse diagram of $W_\Gamma$ with respect to right weak order. Thus we can orient each edge of $D_\Gamma$ from its smaller vertex to its larger vertex. When $W_\Gamma$ is finite, $D_\Gamma$ with such edge orientation is called an \emph{oriented Coxeter cell}. Note that there is a unique \emph{source vertex} (resp.\ \emph{sink vertex}) of $D_\Gamma$ corresponding to the identity element (resp.\ longest element) of $W_\Gamma$, where each edge containing this vertex is oriented away from (resp.\ oriented towards) this vertex. The source and sink vertices are called \emph{tips} of $D_\Gamma$.

The following fact is standard, see e.g.\ \cite{bourbaki2002lie}.

\begin{theorem}
	\label{thm:Coxeter convexity}
	Let $\mathcal R$ be a left coset of a standard parabolic subgroup, and let $v\in W_\Gamma$ be an arbitrary element. Then the following hold.
	\begin{enumerate}
		\item $\mathcal R$ is convex in the sense that for two vertices $v_1,v_2\in\mathcal R$, the vertex set of any geodesic in $D^{(1)}_\Gamma$ joining $v_1$ and $v_2$ is inside $\mathcal R$.
		\item There is a unique vertex $u\in\mathcal R$ such that $d(v,u)=d(v,\mathcal R)$, where $d$ denotes the path metric on the 1-skeleton of $D_\Gamma$.
		\item Pick $v_1\in\mathcal R$. Then $d(v,v_1)=d(v,u)+d(u,v_1)$. 
	\end{enumerate}
\end{theorem}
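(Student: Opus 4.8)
The plan is to recognize all three assertions as immediate consequences of the standard theory of minimal-length coset representatives for parabolic subgroups of a Coxeter group. First I would record two bookkeeping facts. The path metric $d$ on $D^{(1)}_\Gamma$ is the word metric of $W_\Gamma$ with respect to the standard generators, so $d(x,y)=\ell(x^{-1}y)$ where $\ell$ is word length, and $d$ is invariant under the left translation action of $W_\Gamma$ on itself. Consequently, writing $\mathcal R=gW_T$ with $T$ the set of standard generators defining the parabolic subgroup, I may translate by $g^{-1}$ (replacing $v$ by $g^{-1}v$ and each $v_i$ by $g^{-1}v_i$) and assume $\mathcal R=W_T$.

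The engine is the classical lemma (see e.g.\ \cite{bourbaki2002lie}): every left coset $xW_T$ contains a unique element $x_0$ of minimal length, characterized inside $xW_T$ by $\ell(x_0 s)\ge\ell(x_0)$ for all $s\in T$, and every element of $xW_T$ is uniquely of the form $x_0t$ with $t\in W_T$ and $\ell(x_0t)=\ell(x_0)+\ell(t)$. Given this, part (2) is immediate: minimizing $d(v,w)=\ell(v^{-1}w)$ over $w\in W_T$ is the same as minimizing $\ell(a)$ over $a$ in the left coset $v^{-1}W_T$, which has a unique minimizer $a_0$; then $u:=va_0$ lies in $W_T$ (as $a_0\in v^{-1}W_T$), is the unique closest point of $\mathcal R$ to $v$, and $d(v,u)=\ell(a_0)=d(v,\mathcal R)$. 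Part (3) then follows by writing, for $v_1\in W_T$, the element $v^{-1}v_1\in v^{-1}W_T$ as $a_0t$ with $\ell(v^{-1}v_1)=\ell(a_0)+\ell(t)$; since $t=a_0^{-1}v^{-1}v_1=u^{-1}v_1$, this reads $d(v,v_1)=\ell(a_0)+\ell(t)=d(v,u)+d(u,v_1)$.

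For part (1), a $d$-geodesic from $v_1$ to $v_2$ (both in $W_T$) passes through vertices $v_1=x_0,x_1,\dots,x_k=v_2$ where each $x_{j-1}^{-1}x_j$ is a standard generator and the telescoping product $(x_0^{-1}x_1)\cdots(x_{k-1}^{-1}x_k)=x_0^{-1}x_k$ is a reduced word for $v_1^{-1}v_2$. Since $v_1^{-1}v_2\in W_T$ and every reduced expression of an element of $W_T$ uses only generators in $T$ (a standard consequence of the deletion condition, consistent with Theorem~\ref{thm:lek} in the Coxeter case), every initial subproduct $x_0^{-1}x_j$ lies in $W_T$; hence $x_j=v_1(x_0^{-1}x_j)\in v_1W_T=W_T=\mathcal R$, proving convexity.

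I do not anticipate a serious difficulty: the whole argument is reindexing plus left-invariance of $d$. The one substantive input is the length-additivity $\ell(x_0t)=\ell(x_0)+\ell(t)$ in the coset factorization (and the companion fact that reduced words for elements of $W_T$ stay in $T$), which is precisely where the exchange/deletion condition of Coxeter groups is used; I would quote this from \cite{bourbaki2002lie} rather than reprove it, so the proof reduces essentially to citing that lemma and unwinding definitions.
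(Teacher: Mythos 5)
Your proof is correct. The paper does not actually prove this statement---it records it as a standard fact with a pointer to Bourbaki---and your argument (left-translating to reduce to $\mathcal R=W_T$, then invoking the unique minimal-length coset representative with the length-additive factorization $\ell(x_0t)=\ell(x_0)+\ell(t)$, together with the fact that reduced words for elements of $W_T$ use only generators in $T$) is precisely the standard derivation that the citation encapsulates.
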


Theorem~\ref{thm:Coxeter convexity} implies that the face of an oriented Coxeter cell $C$, with its edge labeling and orientation from $C$, can be identified with a lower dimensional Coxeter cell in a way which preserves edge labeling and orientation.

The following lemma is well-known. For example, it follows from \cite[Lemma 3.4.9 and Corollary 5.5.16]{buekenhout2013diagram}. From another perspective, Theorem~\ref{thm:Coxeter convexity} (2)\&(3) says that
left cosets are the so-called \emph{gated subgraphs} of $D^{(1)}_\Gamma$. It is a well-known fact 
(see e.g.\ \cite[Proposition I.5.12(2)]{vandeVel1993}) that any family of gated subgraphs has the finite Helly property. In fact, the proof provided below (for the convenience of the reader), is the proof of this feature
for families of gated subgraphs. 
\begin{lemma}
	\label{lem:Coxeter helly}
	Let $\{R_i\}_{i=1}^n$ be a finite collection of left cosets of standard parabolic subgroups of $W_\Gamma$. Then $\cap_{i=1}^n R_i\neq\emptyset$ given $\{R_i\}_{i=1}^n$ pairwise intersect.
\end{lemma}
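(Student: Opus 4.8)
The plan is to reduce the statement about left cosets of standard parabolic subgroups in $W_\Gamma$ to the lattice-theoretic Helly property for intervals (Lemma~\ref{lem:lattice}(2)), exploiting the geometry of the Davis complex / Cayley graph recorded in Theorem~\ref{thm:Coxeter convexity}. First I would recall that each left coset $R = wW_S$ of a standard parabolic $W_S$ is a convex subset of $D^{(1)}_\Gamma$ and, by part (2) of Theorem~\ref{thm:Coxeter convexity}, has a unique element of minimal length, say $r_i$ for $R_i$; moreover the set of elements of $R_i$ is exactly an ``interval-like'' gallery-convex set. The cleanest route is to pass to a fixed coset, or rather to work inside one coset once we know the intersection is nonempty in the smallest relevant parabolic, and to phrase each $R_i$ as $\{x : \text{a prescribed condition on the reduced expression}\}$ so that membership becomes an order condition with respect to the weak order.

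Concretely, here is the key step. Fix $i$ and write $R_i = w_i W_{S_i}$. I would first show that pairwise intersection of the $R_i$ forces the existence of a common element $u$ minimal with respect to the right weak order among all elements lying in $\bigcap R_i$ — this is where the structure is doing real work. After left-translating so that the minimal-length element of, say, $R_1$ is the identity (or working relative to a well-chosen base vertex), each $R_i$ restricted to the relevant sublattice is an interval $[\![a_i, b_i]\!]$ in $(W_\Gamma, \preccurlyeq)$, using Theorem~\ref{thm:lattice}(1) — but note $W_\Gamma$ need not be spherical, so I cannot apply Theorem~\ref{thm:lattice} globally. The fix is to observe that all the action takes place inside a single spherical standard parabolic: namely, pick any $x_{12} \in R_1 \cap R_2$ and any $x_{13} \in R_1 \cap R_3$, etc.; by convexity and the fact that cosets of parabolics meet in cosets of parabolics (the standard intersection property $wW_S \cap w'W_{S'}$ is either empty or a coset of a parabolic of type contained in a conjugate intersection), one shows the whole configuration lies inside a bounded region, hence inside $v W_T$ for a finite set $T$ — but that finiteness of $T$ requires $W_T$ spherical, which is automatic only under type FC... so instead I would avoid lattices entirely for the general Coxeter case and argue as follows.

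The robust argument: use the retraction / nearest-point projection. Given the collection $\{R_i\}$ pairwise intersecting, pick a base coset $R_1$ and let $\pi \colon W_\Gamma \to R_1$ be the nearest-point projection, which is well-defined and single-valued by Theorem~\ref{thm:Coxeter convexity}(2) and moreover ``gate''-like by part (3). For each $j \ge 2$, since $R_1 \cap R_j \ne \emptyset$, the gate property forces $\pi(R_j) \subseteq R_j$, in fact $\pi(R_j) = R_1 \cap R_j$, which is again a coset of a standard parabolic subgroup of the (spherical, since it is $W_{S_1}$) group $W_{S_1}$ realized inside $R_1 \cong D_{\Gamma_{S_1}}^{(1)}$. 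Thus $\{R_1 \cap R_j\}_{j \ge 2}$ is a family of cosets of standard parabolics in the \emph{finite} Coxeter group $W_{S_1}$, and they pairwise intersect: indeed $(R_1 \cap R_j) \cap (R_1 \cap R_k) = R_1 \cap (R_j \cap R_k) \supseteq R_1 \cap (\text{anything in } R_j \cap R_k) $ — here one needs that a point of $R_j \cap R_k$ projects into $R_1 \cap R_j \cap R_k$, which again follows from the gate property applied twice. Now inside the \emph{finite} group $W_{S_1}$ we are entitled to use Theorem~\ref{thm:lattice}(1): each coset of a standard parabolic is an interval for the weak order (the minimal-length coset representative times the whole parabolic, which is the interval from that representative to that representative times the longest element of the parabolic), pairwise-intersecting intervals in a lattice have nonempty total intersection by Lemma~\ref{lem:lattice}(2), and therefore $\bigcap_{j\ge 2}(R_1 \cap R_j) \ne \emptyset$, i.e.\ $\bigcap_{i=1}^n R_i \ne \emptyset$.

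I expect the main obstacle to be establishing the gate property of the nearest-point projection onto a coset of a standard parabolic and, relatedly, verifying the precise identity $\pi_{R_1}(R_j) = R_1 \cap R_j$ together with compatibility of these projections (so that projecting $R_j\cap R_k$ lands in $R_1 \cap R_j \cap R_k$). These facts are ``well-known'' but do require care: they hinge on Theorem~\ref{thm:Coxeter convexity}(3) (the additivity $d(v,v_1) = d(v,u) + d(u,v_1)$), on the fact that the intersection of two cosets of standard parabolics is again such a coset, and on the identification of cosets of standard parabolics with intervals in the weak order so that Lemma~\ref{lem:lattice} applies. A secondary subtlety is that $W_\Gamma$ itself may be infinite, so one must genuinely localize into the spherical parabolic $W_{S_1}$ before invoking any lattice statement; once that reduction is in place, the lattice Helly property (Lemma~\ref{lem:lattice}(2)) finishes the proof immediately. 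If one prefers the cited route, \cite[Lemma 3.4.9 and Corollary 5.5.16]{buekenhout2013diagram} package exactly the chamber-system/residue version of these projection facts, so the above is really a proof sketch of why that citation yields the claim.
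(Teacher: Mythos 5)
Your strategy---project onto $R_1$ via the gate property and then finish inside $R_1$---is close in spirit to the paper's argument, but as written it has two genuine problems. First, the finishing step is unjustified: you declare $W_{S_1}$ to be spherical and then treat each $R_1\cap R_j$ as an interval in the weak order of a \emph{finite} Coxeter group, invoking Theorem~\ref{thm:lattice}(1) and Lemma~\ref{lem:lattice}(2). But the lemma is stated for cosets of arbitrary standard parabolic subgroups of an arbitrary $W_\Gamma$, and $W_{S_1}$ may be infinite (no sphericity is assumed anywhere in the statement). For infinite $W_{S_1}$ the coset $R_1$ has no longest element, is not an interval, and the weak order on $W_{S_1}$ is not a lattice, so this step fails exactly in the generality claimed. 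Second, the statement you dispose of with ``follows from the gate property applied twice''---that a point of $R_j\cap R_k$ projects into $R_1\cap R_j\cap R_k$---is not a routine compatibility fact: it \emph{is} the lemma for $n=3$, and you never actually prove it (you yourself flag it, together with $\pi_{R_1}(R_j)=R_1\cap R_j$, as the main obstacle). The identity $\pi_{R_1}(R_j)=R_1\cap R_j$ is moreover not needed at all; only the triple point is.

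For comparison, the paper proves the $n=3$ case directly and needs nothing else: for $x\in R_2\cap R_3$, let $y$ be the gate of $x$ in $R_1$ and $y_2$ the gate of $x$ in $R_1\cap R_2$, which is again a coset of a standard parabolic by Theorem~\ref{thm:lek}; the additivity $d(x,y_2)=d(x,y)+d(y,y_2)$ from Theorem~\ref{thm:Coxeter convexity}(3) places $y$ on a geodesic from $x\in R_2$ to $y_2\in R_2$, so $y\in R_2$ by convexity (Theorem~\ref{thm:Coxeter convexity}(1)), and symmetrically $y\in R_3$, giving $y\in R_1\cap R_2\cap R_3$. The general case is then a plain induction on $n$: the sets $\tau_i=R_1\cap R_i$ are cosets of standard parabolics (Theorem~\ref{thm:lek}(2)) and pairwise intersect by the $n=3$ case. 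No lattice theory and no sphericity are required. If you replace your lattice finish by this induction and write out the $n=3$ projection argument in full, your proof becomes correct and essentially coincides with the paper's.
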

\begin{proof}
	We first look at the case $n=3$. Pick $x\in R_2\cap R_3$. Let $y$ be the unique point in $R_1$ closest to $x$ (Theorem~\ref{thm:Coxeter convexity} (2)). For $i=2,3$, let $y_i$ be the unique point in $R_1\cap R_i$ closest to $x$. By Theorem~\ref{thm:Coxeter convexity} (3), $d(x,y_2)=d(x,y)+d(y,y_2)$. Thus $y\in R_2$ by the convexity of $R_2$ (Theorem~\ref{thm:Coxeter convexity} (1)). Thus $y=y_2$. Similarly, $y=y_3$. Hence $y\in \cap_{i=1}^3 R_i$. For $n>3$, we induct on $n$. For $2\le i\le n$, let $\tau_i=R_1\cap R_i$. Then each $\tau_i$ is a left coset of a standard subgroup by Theorem~\ref{thm:lek}. Moreover, $\{\tau_i\}_{i=2}^n$ pairwise intersect by the $n=3$ case. Thus $\cap_{i=2}^n\tau_i\neq\emptyset$ by induction, and the lemma follows.
\end{proof}

\subsection{Salvetti complexes}
\begin{definition}[Salvetti complex]
	Given an Artin group $S_\Gamma$, we define a cell complex, called the \emph{Salvetti complex} and denoted by $S_\Gamma$, as follows. The 1-skeleton of $S_\Gamma$ is a wedge sum of circles, one circle for each generator. We label each circle by its associated generators and choose an orientation for each circle. Suppose the $(n-1)$-skeleton of $S_\Gamma$ has been defined. Now for each spherical subgraph $\Gamma'\subset\Gamma$ with $n$ vertices, we attach an oriented Coxeter cell $D_{\Gamma'}$ with the attaching map being a cellular map whose restriction to each face of the boundary of $D_{\Gamma'}$ coincides with the attaching map of a lower dimensional oriented Coxeter cell (when $n=2$, we require the attaching map preserves the orientation and the labeling of edges).
\end{definition}

The 2-skeleton of $S_\Gamma$ is the presentation complex of $A_\Gamma$. Hence $\pi_1(S_\Gamma)=A_\Gamma$. Let $X_\Gamma$ be the universal cover of $S_\Gamma$. The 1-skeleton of $X_\Gamma$ is the Cayley graph of $A_\Gamma$, and it is endowed with the path metric with each edge having length $=1$. We pull back labeling and orientation of edges in $S_\Gamma$ to $X_\Gamma$. A \emph{positive path} in $X^{(1)}_\Gamma$ from a vertex $x\in X_\Gamma$ to another vertex $y$ is an edge path such that one reads of a word in the Artin monoid by following this path. By considering the length homomorphism $A_\Gamma\to \mathbb Z$, we know each positive path is geodesic. 

We identify $X^{(0)}_\Gamma$ with elements in $A_\Gamma$. Hence $X^{(0)}_\Gamma$ inherits the prefix order $\preccurlyeq$ and the suffix order $\succcurlyeq$. Then $x\preccurlyeq y$ if and only if there is a positive path in $X^{(1)}_\Gamma$ from $x$ to $y$. 

The quotient homomorphism $A_\Gamma\to W_\Gamma$ induces a graph morphism $X^{(1)}_\Gamma\to D^{(1)}_\Gamma$ which preserves labeling of edges. Take a cell of $S_\Gamma$ represented by its attaching map $q:D_{\Gamma'}\to S_\Gamma$ with $D_{\Gamma'}$ being an oriented Coxeter cell. Let $\tilde q:D_{\Gamma'}\to X_\Gamma$ be a lift of $q$. Note that $\tilde q$ respects orientation and labeling of edges. It follows from definition that $D^{(1)}_{\Gamma'}\to X^{(1)}_\Gamma\to D^{(1)}_\Gamma$ is an embedding, hence $D^{(1)}_{\Gamma'}\to X^{(1)}_\Gamma$ is an embedding. By induction on dimension, we know $\tilde q$ is an embedding. Hence each cell in $X_\Gamma$ is embedded. 
Given a vertex $x\in X_\Gamma$, there is a one to one correspondence between cells of $X_\Gamma$ whose source vertex is $x$ and spherical subgraphs of $\Gamma$. Then $X^{(1)}_\Gamma\to D^{(1)}_\Gamma$ extends naturally to a cellular map $\pi:X_\Gamma\to D_\Gamma$ which is a homeomorphism on each closed cell.

\begin{lemma}
	\label{lem:isometric}
	Let $q:C\to X_\Gamma$ be the attaching map from an oriented Coxeter cell $C$ to $X_\Gamma$. Let $a$ and $b$ be the source vertex and the sink vertex of $C$. Let $\preccurlyeq_C$ be the right weak order on $C^{(0)}$. Then the following hold (we use $d$ to denote the path metric on the $1$-skeleton).
	\begin{enumerate}
		\item the map $(C^{(1)},d)\to (X^{(1)}_\Gamma,d)$ is an isometric embedding;
		\item for any two vertices $x,y\in C$, $x\preccurlyeq_C y$ if and only if $q(x)\preccurlyeq q(y)$, moreover, $[q(x),q(y)]_{\preccurlyeq}=q([a,b]_{\preccurlyeq_C})$ where $[q(x),q(y)]_\preccurlyeq$ denotes the interval between two elements with respect to the order $\preccurlyeq$;
		 \item for two cells $C_1$ and $C_2$ in $X_\Gamma$, if $C^{(0)}_1\subset C^{(0)}_2$, then there exists a cell $C_3\subset C_2$ such that $C^{(0)}_3=C^{(0)}_1$.
	\end{enumerate}
\end{lemma}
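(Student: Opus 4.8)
The plan is to prove the three assertions in order, using the structural facts already established: the quotient map $\pi \colon X_\Gamma \to D_\Gamma$ which is a homeomorphism on each closed cell, the convexity of parabolic cosets in the Davis complex (Theorem~\ref{thm:Coxeter convexity}), and the lattice structure on $(A_\Gamma, \preccurlyeq)$ when $\Gamma$ is spherical (Theorem~\ref{thm:lattice}). Throughout, $C$ maps to $X_\Gamma$ via $q$ with $C$ an oriented Coxeter cell $D_{\Gamma'}$ for some spherical $\Gamma'$, and I may identify $C^{(0)}$ with its image since $q$ is an embedding on cells.

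For (1): the composition $C^{(1)} \xrightarrow{q} X^{(1)}_\Gamma \xrightarrow{\pi} D^{(1)}_\Gamma$ realizes $C^{(1)}$ as a parabolic coset $\mathcal R \subset W_\Gamma$, since $q$ respects edge labels and the image of the source vertex together with the labels of $\Gamma'$ determine such a coset. The map $q$ on $1$-skeleta is distance non-increasing (it is a graph morphism), and $\pi \circ q$ is an isometric embedding of $C^{(1)}$ onto $\mathcal R$ equipped with the path metric of $D^{(1)}_\Gamma$, by the convexity statement Theorem~\ref{thm:Coxeter convexity}(1): a $D_\Gamma$-geodesic between two points of $\mathcal R$ stays in $\mathcal R$, and $\mathcal R$ with the ambient metric is isometric to $C^{(1)}$ with its intrinsic metric. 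Since $\pi \circ q$ factors through $q$ and is already an isometric embedding, $q$ itself must be an isometric embedding on $C^{(1)}$ — distances cannot drop under $q$ and then be preserved under $\pi$. This is where I expect the only real subtlety: one must be slightly careful that $\pi$ restricted to $q(C^{(1)})$ is injective (true because $q$ is an embedding and $\pi$ is a homeomorphism on the closed cell $q(C)$) so that the composite being isometric genuinely forces $q|_{C^{(1)}}$ to be isometric.

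For (2): a positive path in $C^{(1)}$ maps under $q$ to a positive path in $X^{(1)}_\Gamma$ (orientations are respected), so $x \preccurlyeq_C y$ implies $q(x) \preccurlyeq q(y)$. Conversely, suppose $q(x) \preccurlyeq q(y)$, witnessed by a positive path $p$ in $X^{(1)}_\Gamma$ from $q(x)$ to $q(y)$; since positive paths are geodesic (via the length homomorphism) and $q$ is an isometric embedding by part (1), any $X_\Gamma$-geodesic between $q(x)$ and $q(y)$ has the same length as the $C$-geodesic, and moreover the image $\mathcal R = \pi(q(C))$ is convex in $D_\Gamma$, so $\pi(p)$ stays in $\mathcal R$; lifting back, $p$ lies in $q(C^{(1)})$, hence reading off a positive word along $p$ exhibits $x \preccurlyeq_C y$. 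For the interval identity $[q(x),q(y)]_\preccurlyeq = q([a,b]_{\preccurlyeq_C})$ when $x=a, y=b$ are the tips: the inclusion $\supseteq$ is immediate from the first part of (2); for $\subseteq$, if $q(a) \preccurlyeq z \preccurlyeq q(b)$ then $z$ lies on a positive path from $q(a)$ to $q(b)$, which as above lies in $q(C^{(1)})$, so $z = q(w)$ with $a \preccurlyeq_C w \preccurlyeq_C b$. Here the lattice property of $(A_\Gamma,\preccurlyeq)$ guarantees the intervals behave well, but the essential input is again the convexity/isometric-embedding package from part (1).

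For (3): given cells $C_1, C_2$ in $X_\Gamma$ with $C_1^{(0)} \subset C_2^{(0)}$, push forward via $\pi$ to get parabolic cosets $\mathcal R_1 = \pi(C_1) \subseteq \mathcal R_2 = \pi(C_2)$ in $W_\Gamma$ (using that $\pi$ is a homeomorphism on closed cells, the vertex sets correspond). An inclusion of parabolic cosets $\mathcal R_1 \subseteq \mathcal R_2$ means, after translating, that the standard parabolic subgroup $W_{\Gamma_1}$ is contained in $W_{\Gamma_2}$, so $\Gamma_1 \subseteq \Gamma_2$ and $\mathcal R_1$ is a face of the Coxeter cell on $\mathcal R_2$ in a label- and orientation-preserving way (this is exactly the remark following Theorem~\ref{thm:Coxeter convexity}). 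By the construction of $X_\Gamma$ — cells attached so that each boundary face coincides with the attaching map of a lower-dimensional oriented Coxeter cell — the corresponding face of $C_2$ is a cell $C_3 \subset C_2$ in $X_\Gamma$, and since $\pi$ is injective on $C_2^{(0)}$ we get $C_3^{(0)} = C_1^{(0)}$. The main obstacle overall is part (1): once the isometric embedding and the convexity of $\pi(q(C))$ in $D_\Gamma$ are in hand, parts (2) and (3) follow by transporting statements across $\pi$ and invoking the corresponding facts for Coxeter cells.
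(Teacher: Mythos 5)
Parts (1) and (3) of your proposal are fine. For (1) you use exactly the paper's argument (compose with $\pi\colon X_\Gamma\to D_\Gamma$, note both maps are $1$-Lipschitz and the coset $\mathcal R$ is convex). For (3) you argue downstairs in $D_\Gamma$ via inclusion of parabolic cosets and injectivity of $\pi$ on the closed cell $C_2$; this is a legitimate alternative to the paper's route, which instead deduces (3) from the source-to-sink positive path used in the proof of (2).

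The genuine gap is in the converse direction of (2), at the step ``lifting back, $p$ lies in $q(C^{(1)})$.'' Knowing $\pi(p)\subset\mathcal R=\pi(q(C))$ does not let you conclude $p\subset q(C^{(1)})$: $\pi$ is injective only on closed cells, and $\pi^{-1}(\mathcal R)$ contains the entire standard subcomplex of type $\Gamma'$ through $q(C)$ (an infinite copy of the Cayley graph of $A_{\Gamma'}$), of which $q(C)$ is a single finite cell. Positivity and geodesicity of $p$ do not by themselves confine it to the cell: the positive edge issuing from the sink $q(b)$ with label $s\in\Gamma'$ is a positive geodesic whose projection is an edge of $\mathcal R$, yet it leaves $q(C)$ (its endpoint is not a cell vertex, so this is no counterexample to the lemma, but it shows your inference is invalid as stated). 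With both endpoints in the cell, proving that a positive path between them stays in the cell amounts to showing that the word read along $p$ increases the weak order at every step, which is essentially the conclusion $x\preccurlyeq_C y$ you are trying to establish, so the argument as written is circular at its crucial point. The paper avoids this by first extending $p$ to a positive path $\omega$ from $q(a)$ to $q(b)$ through $q(x)$ and $q(y)$: its projection is a geodesic from $\bar a$ to $\bar b$ inside the coset, hence spells a reduced word for the longest element of $W_{\Gamma'}$; every such reduced word labels a path in the cell starting at the source, and since a path in the Cayley graph of $A_\Gamma$ is determined by its initial vertex and label word, $\omega\subset q(C)$, whence the subpath from $q(x)$ to $q(y)$ gives $x\preccurlyeq_C y$. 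Your treatment of the ``moreover'' statement cites the same unjustified lifting (``as above''), so it inherits the gap; it likewise needs the longest-word argument (or an equivalent external input on divisibility of simple elements in the Artin monoid).
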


\begin{proof}
	Assertion (1) follows from that the map $X^{(1)}_\Gamma\to D^{(1)}_\Gamma$ is 1-Lipschitz. Now we prove (2). The only if direction follows from that $q$ preserves orientation of edges. Now assume $q(x)\preccurlyeq q(y)$. Since $q(a)\preccurlyeq q(x)\preccurlyeq q(y)\preccurlyeq q(b)$, there is a positive path $\omega$ from $q(a)$ to $q(b)$ passing through $q(x)$ and $q(y)$. Let $\bar C^{(1)},\bar \omega,\bar a$ and $\bar b$ be the images of $q(C^{(1)}),\omega,q(a)$ and $q(b)$ under $X^{(1)}_\Gamma\to D^{(1)}_\Gamma$. Then $d(a,b)=d(\bar a,\bar b)$, which also equals to the length of $\omega$ (recall each positive path is geodesic). Thus $length(\omega)=length(\bar \omega)$. By Theorem~\ref{thm:Coxeter convexity}, $\bar{\omega}\subset\bar C^{(1)}$. Then $\bar{\omega}$ (hence $\omega$) corresponds to the longest word in the finite Coxeter group associated $C$. Thus $\omega\subset q(C)$. It follows that $x\preccurlyeq y$. The proof of the moreover statement in (2) is similar.
	
	For (3), there is a positive path $\omega$ from the source of $C_1$ to the sink of $C_1$ corresponding to the longest word of some finite Coxeter group. The proof of (2) implies that $\omega\subset C_2$. Then $\omega$ determines $C_3\subset C_2$ as required.
\end{proof}

\begin{lemma}
	\label{lem:same source}
Let $C_1$ and $C_2$ be two cells of $X_\Gamma$. Then the following are equivalent:
\begin{enumerate}
	\item $C_1=C_2$;
	\item $C^{(0)}_1=C^{(0)}_2$;
	\item $C_1$ and $C_2$ have the same sink.
\end{enumerate}
\end{lemma}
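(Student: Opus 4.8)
The plan is to establish $(1)\Rightarrow(2)\Rightarrow(3)$, which is straightforward, together with the two implications $(2)\Rightarrow(1)$ and $(3)\Rightarrow(1)$, which carry the content; together these give all the stated equivalences. The implication $(1)\Rightarrow(2)$ is trivial, and for $(2)\Rightarrow(3)$ I would invoke Lemma~\ref{lem:isometric}(2): applying its ``moreover'' clause with $x=a$ the source and $y=b$ the sink of a cell $C$, and using that in a finite Coxeter group every element lies weakly between the identity and the longest element, one gets that $C^{(0)}$ is exactly the prefix-order interval $[\,\mathrm{source}(C),\mathrm{sink}(C)\,]_{\preccurlyeq}$. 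In particular $\mathrm{sink}(C)=\max_{\preccurlyeq}C^{(0)}$ and $\mathrm{source}(C)=\min_{\preccurlyeq}C^{(0)}$, so both tips of $C$ are determined by $C^{(0)}$.

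For $(2)\Rightarrow(1)$, assume $C_1^{(0)}=C_2^{(0)}$. By the previous paragraph $C_1$ and $C_2$ then share the same source $x$. By the one-to-one correspondence between cells of $X_\Gamma$ with source $x$ and spherical subgraphs of $\Gamma$, the cell $C_i$ corresponds to a spherical (full) subgraph $\Gamma_i'$, and since such a subgraph is determined by its vertex set it is enough to show $V\Gamma_1'=V\Gamma_2'$. I would read this off from the local picture of $C_i$ at $x$: in an oriented Coxeter cell the source vertex is incident to exactly one edge per generator, labelled by that generator, so the neighbours of $x$ inside $C_i^{(1)}$ are precisely $\{xs : s\in V\Gamma_i'\}$. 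On the other hand, since $C_i^{(1)}\hookrightarrow X_\Gamma^{(1)}$ is an isometric embedding (Lemma~\ref{lem:isometric}(1)), those neighbours coincide with the set of vertices of $C_i^{(0)}$ at distance $1$ from $x$ in $X_\Gamma^{(1)}$, a set depending only on $x$ and on $C_i^{(0)}$. As $C_1^{(0)}=C_2^{(0)}$ and $x$ is common, these sets agree, whence $\{xs:s\in V\Gamma_1'\}=\{xs:s\in V\Gamma_2'\}$, so $V\Gamma_1'=V\Gamma_2'$, so $\Gamma_1'=\Gamma_2'$ and $C_1=C_2$.

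For $(3)\Rightarrow(1)$ the natural plan is to run the same mechanism at the sink $z$ in place of the source: the neighbours of $z$ inside $C_i^{(1)}$ are the attaching-map images of the edges of the oriented Coxeter cell at its longest element, i.e.\ the vertices $\{zs^{-1}:s\in V\Gamma_i'\}$, each joined to $z$ by an edge oriented towards $z$ and labelled $s$; one then wants to recover $V\Gamma_i'$ from this, and hence the source (which is $z$ times the inverse of the positive lift of the longest element of $W_{\Gamma_i'}$), concluding via the source-correspondence. An alternative is the positive-path technique of the proof of Lemma~\ref{lem:isometric}(3): a maximal positive path through a cell spells out a reduced word for the longest element of $W_{\Gamma'}$, and that word already determines $\Gamma'$. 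I expect the main obstacle to be exactly this recovery step for $(3)$: in contrast to $(2)\Rightarrow(1)$, where the entire vertex set is in hand, here one has only the single vertex $z$ to work with, and extracting $\Gamma'$ --- hence the cell --- from it is the delicate point, genuinely using the isometric embedding of Lemma~\ref{lem:isometric}(1) to convert global distances around a tip into the local combinatorics of $C_i$, rather than manipulating vertex sets softly via Lemma~\ref{lem:isometric}(3).
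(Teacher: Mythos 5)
Your implications $(1)\Rightarrow(2)$, $(2)\Rightarrow(3)$ and $(2)\Rightarrow(1)$ are correct, and they follow a genuinely different route from the paper's: the paper first treats two edges with the same endpoints (the label-preserving map $X^{(1)}_\Gamma\to D^{(1)}_\Gamma$ forces equal labels, and $\pi_1$-injectivity of the generator circles, Theorem~\ref{thm:lek}(1), then forces equality), and reduces the general case to this one-dimensional case via Lemma~\ref{lem:isometric}(3) together with edge orientations; you instead identify $C^{(0)}$ with the prefix interval between the two tips via the ``moreover'' clause of Lemma~\ref{lem:isometric}(2), so that source and sink are the $\preccurlyeq$-extrema of $C^{(0)}$, and then recover the type from the cell-neighbours of the common source. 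That part is sound (distinct standard generators are distinct elements of $A_\Gamma$, so the neighbour set $\{xs\colon s\in V\Gamma_i'\}$ determines $V\Gamma_i'$, and the source-correspondence finishes), and it gives exactly the direction $(2)\Rightarrow(1)$ that Lemma~\ref{lem:full} later needs.

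The gap is $(3)\Rightarrow(1)$, which you leave as an acknowledged obstacle, and your unease is warranted: the recovery of $\Gamma'$ from the sink alone that you sketch is impossible, because a sink does not determine the type. Already for $\Gamma$ a single edge labelled $2$, so that $X_\Gamma$ is the square tiling of the plane, the square with vertices $1,s,t,st$ and the edge from $s$ to $st$ are distinct cells with common sink $st$. Thus $(3)\Rightarrow(1)$ only holds once the two cells are known to have the same type, and in that case it is immediate: they are lifts of the same cell of $S_\Gamma$, and a lift is determined by the image of a single vertex (equivalently, the source is the sink multiplied by the inverse of the positive lift of the longest element of $W_{\Gamma'}$, after which the source-correspondence applies). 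This same-type reading is what the paper's one-line ``$(3)\Rightarrow(1)$ follows from the definition of cells'' amounts to. So as a proof of the stated three-way equivalence your write-up is incomplete at exactly this step, and no amount of distance computations around the sink will close it; the fix is the same-type observation above (or restricting attention to the equivalence $(1)\Leftrightarrow(2)$, which is the only part used later and which you do establish).
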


\begin{proof}
$(1)\Rightarrow(2)$ is trivial. $(3)\Rightarrow(1)$	follows from the definition of cells. Now we prove $(2)\Rightarrow(3)$. First assume $C_1$ and $C_2$ are two edges. By considering the label preserving map $X^{(1)}_\Gamma\to D^{(1)}_\Gamma$, we know $C_1$ and $C_2$ have the same label. On the other hand, Theorem~\ref{thm:lek} (1) implies that map from the circle associated with a generator in $S_\Gamma$ to $S_\Gamma$ is $\pi_1$-injective. Thus $C_1=C_2$. Now we look at the general case. By Lemma~\ref{lem:isometric} (3), two vertices are adjacent in $C_1$ if and only if they are adjacent in $C_2$. Then $(2)\Rightarrow(3)$ follows from the 1-dimensional case by considering the orientation of edges on the cells.
\end{proof}

A \emph{full subcomplex} $K\subset X_\Gamma$ is a subcomplex such that if a cell of $X_\Gamma$ has vertex set contained in $K$, then the cell is contained in $K$. It is clear that intersection of two full subcomplexes is also a full subcomplex. The following is a consequence of Lemma~\ref{lem:isometric} (2) and Lemma~\ref{lem:same source}.

\begin{lemma}
	\label{lem:full}
	Any cell in $X_\Gamma$ is a full subcomplex. 
\end{lemma}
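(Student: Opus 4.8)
The plan is to show that if a cell $C$ of $X_\Gamma$ has its full vertex set contained in a subcomplex $K$ that is itself full, then $C\subset K$; equivalently, it suffices to show that whenever the vertices of a cell $C'$ all lie on another cell $C$ (i.e.\ $C'^{(0)}\subseteq C^{(0)}$), the cell $C'$ is actually a face of $C$. This is exactly the content I would extract from Lemma~\ref{lem:isometric}(3) together with Lemma~\ref{lem:same source}. Indeed, given $C'^{(0)}\subseteq C^{(0)}$, Lemma~\ref{lem:isometric}(3) produces a cell $C_3\subseteq C$ with $C_3^{(0)}=C'^{(0)}$, and then Lemma~\ref{lem:same source} (the implication $(2)\Rightarrow(1)$) forces $C'=C_3$, so $C'$ is a face of $C$. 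Thus any full subcomplex containing $C^{(0)}$ must contain all faces of $C$, hence $C$ itself.

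Concretely, here is the order of steps. First, unravel the definition: a full subcomplex $K$ is one such that any cell with vertex set in $K^{(0)}$ lies in $K$; so to prove a single cell $C$ is full, I must show that for every cell $C'$ of $X_\Gamma$ with $C'^{(0)}\subseteq C^{(0)}$, one has $C'\subseteq C$. Second, apply Lemma~\ref{lem:isometric}(3) to the pair $(C',C)$: since $C'^{(0)}\subseteq C^{(0)}$, there is a cell $C_3\subseteq C$ with $C_3^{(0)}=C'^{(0)}$. Third, apply Lemma~\ref{lem:same source}: $C_3$ and $C'$ have equal vertex sets, so $C_3=C'$; therefore $C'=C_3\subseteq C$, as desired. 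Finally, note this shows every face of $C$ (in particular $C$ regarded as sitting in $X_\Gamma$) is determined by its vertex set inside $C^{(0)}$, which is precisely the fullness assertion.

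I do not expect a serious obstacle here, since the two lemmas invoked do all the real work; the only thing requiring a moment of care is making sure the definition of "full subcomplex" is being applied correctly — namely that fullness of the single cell $C$ is the statement that no cell of $X_\Gamma$ can have vertices only among $C^{(0)}$ without being a face of $C$, which is what Steps two and three establish. One should also observe that the cells $C_3$ furnished by Lemma~\ref{lem:isometric}(3) are genuinely subcomplexes of $C$ (faces), not merely subsets, so that the conclusion $C'\subseteq C$ is at the level of subcomplexes; this is immediate from the construction in the proof of Lemma~\ref{lem:isometric}(3), where $C_3$ is cut out by a positive path inside $C$.
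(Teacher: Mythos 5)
Your argument is correct and is essentially the paper's: the paper states Lemma~\ref{lem:full} as an immediate consequence of Lemma~\ref{lem:isometric} and Lemma~\ref{lem:same source}, and your two steps (producing a face $C_3\subset C$ with the same vertex set via Lemma~\ref{lem:isometric}(3), then identifying $C'=C_3$ via Lemma~\ref{lem:same source}) are precisely that deduction. The only cosmetic difference is that you invoke part (3) of Lemma~\ref{lem:isometric} directly, whereas the paper cites part (2), from whose proof part (3) is derived.
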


%


Take a full subgraph $\Gamma'\subset\Gamma$, then there is an embedding $S_{\Gamma'}\to S_\Gamma$ which is $\pi_1$-injective (Theorem~\ref{thm:lek}~(1)). A \emph{standard subcomplex of type $\Gamma'$} of $X_\Gamma$ is a connected component of the preimage of $S_{\Gamma'}$ under the covering map $X_\Gamma\to S_\Gamma$. Each such standard subcomplex can be naturally identified with the universal cover $X_{\Gamma'}$ of $S_{\Gamma'}$. A standard subcomplex is \emph{spherical} if its type is spherical. We define the \emph{type} of a cell in $X_\Gamma$ to be the type of the smallest standard subcomplex that contains this cell. 

\begin{lemma}
	\label{lem:standard subcomplex is full}
Any standard subcomplex of $X_\Gamma$ is a full subcomplex.
\end{lemma}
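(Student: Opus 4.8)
The plan is to show that a standard subcomplex $Y$ of type $\Gamma'$ contains every cell of $X_\Gamma$ whose vertex set lies in $Y^{(0)}$. So let $C$ be a cell of $X_\Gamma$ with $C^{(0)}\subset Y^{(0)}$; I want to conclude $C\subset Y$. First I would reduce to the case of edges: by Lemma~\ref{lem:isometric}~(3) (or rather the fact that $C$ is determined by its $0$-skeleton together with the edge structure, via Lemma~\ref{lem:full} and Lemma~\ref{lem:same source}), it suffices to know that every edge of $C$ lies in $Y$, since then all of $C^{(0)}$ together with the spanning edges sit inside $Y$, and as $Y\cong X_{\Gamma'}$ is itself built from Coxeter cells of spherical subgraphs of $\Gamma'$, the cell $C$ — being the unique cell of $X_\Gamma$ on that vertex set — must coincide with the corresponding cell of $Y$. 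More precisely: pick the source vertex $a$ and sink vertex $b$ of $C$; the positive path $\omega$ from $a$ to $b$ realizing the longest element of the Coxeter group of $C$ has all its edges among the edges of $C$, hence (once the edge case is done) lies in $Y$, and by Lemma~\ref{lem:isometric}~(2) applied inside $Y=X_{\Gamma'}$ this path determines a cell $C'\subset Y$ with $C'^{(0)}=C^{(0)}$; then Lemma~\ref{lem:same source} forces $C=C'\subset Y$.

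So the crux is the edge case: if $e$ is an edge of $X_\Gamma$ with both endpoints in $Y^{(0)}$, then $e\subset Y$. Equivalently, identifying $Y$ with the Cayley graph of $A_{\Gamma'}$ sitting inside the Cayley graph $X^{(1)}_\Gamma$ of $A_\Gamma$, I must show that if two elements of a coset $gA_{\Gamma'}$ differ by a standard generator $s\in V\Gamma$, then in fact $s\in V\Gamma'$ (so the edge is an edge of the coset). Write the two endpoints as $gv$ and $gw$ with $v,w\in A_{\Gamma'}$ and $gw = gv\cdot s^{\pm1}$, i.e.\ $v^{-1}w = s^{\pm 1}\in A_{\Gamma'}$. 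Now apply Theorem~\ref{thm:lek}~(2): $A_{\Gamma'}\cap A_{\Gamma''} = A_{\Gamma'\cap\Gamma''}$ where $\Gamma''$ is the single-vertex full subgraph $\{s\}$; since $s^{\pm1}\in A_{\{s\}}$ and $s^{\pm1}\in A_{\Gamma'}$, we get $s^{\pm1}\in A_{\Gamma'\cap\{s\}}$, which is nontrivial only when $s\in V\Gamma'$. Hence $s\in V\Gamma'$ and the edge $e$ is an edge of $gA_{\Gamma'}$, i.e.\ $e\subset Y$.

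The main obstacle, and the step that needs the most care, is making the identification of a standard subcomplex with $X_{\Gamma'}$ precise enough that ``two endpoints of $e$ lie in $Y^{(0)}$'' really does translate into an equation $v^{-1}w = s^{\pm1}$ with $v,w$ in the \emph{same} coset of $A_{\Gamma'}$, and then invoking Theorem~\ref{thm:lek} for the one-vertex subgraph correctly — the potential subtlety is that an edge of $X^{(1)}_\Gamma$ corresponds to multiplication by $s$ on the \emph{right} (since the $1$-skeleton is the Cayley graph with left $A_\Gamma$-action), so one should check the coset/side bookkeeping, but this is exactly the setup in which $X^{(0)}_\Gamma$ is identified with $A_\Gamma$ and $x\preccurlyeq y$ iff there is a positive path, as recalled just before Lemma~\ref{lem:isometric}. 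Once the edge case is secured, the promotion to arbitrary cells is purely formal via Lemmas~\ref{lem:isometric},~\ref{lem:same source} and~\ref{lem:full} as outlined above.
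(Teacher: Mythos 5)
Your proposal is correct, and it reaches the same key observation as the paper -- that every edge of a cell $C$ with $C^{(0)}\subset Y^{(0)}$ must be labeled by a vertex of $\Gamma'$ -- but it gets there and finishes differently. For the edge-label step you invoke Theorem~\ref{thm:lek}~(2) inside the Artin group ($A_{\Gamma'}\cap A_{\{s\}}=A_{\Gamma'\cap\{s\}}$, so $s^{\pm1}\in A_{\Gamma'}$ forces $s\in V\Gamma'$), whereas the paper projects via $X^{(1)}_\Gamma\to D^{(1)}_\Gamma$ and reads off the same conclusion in the Coxeter group; both are legitimate, and yours has the small advantage of staying entirely in $A_\Gamma$. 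Where the two arguments genuinely diverge is the promotion from edges to the whole cell: the paper simply notes that $C$ is then of type $\Gamma''\subset\Gamma'$, hence is mapped into $S_{\Gamma'}$ by the covering map, and since $C\cup Y$ is connected and $Y$ is a connected component of the preimage of $S_{\Gamma'}$, one gets $C\subset Y$ in one line. You instead rebuild the cell inside $Y\cong X_{\Gamma'}$: since $\Gamma''$ is spherical and contained in $\Gamma'$ and the source $a$ of $C$ lies in $Y$, there is a cell $C'\subset Y$ of type $\Gamma''$ with source $a$, and $C=C'$ by Lemma~\ref{lem:same source} (same sink, or same vertex set). This works, but two small points deserve to be made explicit rather than glossed: (i) that an edge of $X_\Gamma$ with label in $\Gamma'$ and endpoints in $Y$ actually lies in $Y$ is most cleanly seen from the component argument (or from the uniqueness of edges on a given vertex set in Lemma~\ref{lem:same source}); and (ii) the identification $C'^{(0)}=C^{(0)}$ should be argued via the same-sink criterion or the source-vertex/spherical-subgraph correspondence, not via intervals in $(A_\Gamma,\preccurlyeq)$, since the interval statement (Lemma~\ref{lem:intersection of cell and subcomplex}~(3)) is proved later and uses the present lemma. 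With those clarifications your proof is complete; the paper's covering-space finish is just shorter.
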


\begin{proof}
Let $X\subset X_\Gamma$ be a standard subcomplex of type $\Gamma'$. Let $C\subset X_\Gamma$ be a cell such that $C^{(0)}\subset X$. By considering the map $X^{(1)}_\Gamma\to D^{(1)}_\Gamma$, we know each edge of $C$ is labeled by a vertex of $\Gamma'$. Thus $C$ is of type $\Gamma''$ for $\Gamma''\subset\Gamma'$. Hence $C$ is mapped to $S_{\Gamma''}$ under $X_\Gamma\to S_\Gamma$. Since $C\cup X$ is connected, $C\subset X$. Then the lemma follows.
\end{proof}

\begin{lemma}
	\label{lem:intersection of cell and subcomplex}
Let $X_1, X_2\subset X_\Gamma$ be standard subcomplexes of type $\Gamma_1,\Gamma_2$. 
	\begin{enumerate}
		\item Suppose $X_1\cap X_2\neq \emptyset$. Then $X_1\cap X_2$ is a standard subcomplex of type $\Gamma_1\cap \Gamma_2$.
		\item Let $C\subset X_\Gamma$ be a cell of type $\Gamma_1$ such that $C\cap X_2\neq\emptyset$. Then $C\cap X_2$ is a cell of type $\Gamma_1\cap\Gamma_2$.
		\item For elements $a,b\in X^{(0)}_1$, the interval between $a$ and $b$ with respect to $(X^{(0)}_\Gamma,\preccurlyeq)$ is contained in $X^{(0)}_1$.
		\item We identify $X^{(0)}_1$ with $A_{\Gamma_1}$, hence $X^{(0)}_1$ inherits a prefix order $\preccurlyeq_{\Gamma_1}$ from $A_{\Gamma_1}$. Then $\preccurlyeq_{\Gamma_1}$ coincides with the restriction of $(A_\Gamma,\preccurlyeq)$ to $X^{(0)}_1$.
	\end{enumerate}	
\end{lemma}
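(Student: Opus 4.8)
The plan is to reduce each of the four assertions to statements that are either already available (Theorem~\ref{thm:lek}, Theorem~\ref{thm:convexity}, Lemma~\ref{lem:isometric}) or transported from the universal cover $X_{\Gamma_1}$ via the natural identification of the standard subcomplex $X_1$ with $X_{\Gamma_1}$. The central observation, used repeatedly, is that each standard subcomplex is a full subcomplex (Lemma~\ref{lem:standard subcomplex is full}), so its structure as a combinatorial object is completely determined by its vertex set together with the ambient cell structure; and the covering map $X_\Gamma\to S_\Gamma$ restricted to $X_1$ is precisely the universal covering $X_{\Gamma_1}\to S_{\Gamma_1}$, compatibly with edge labels and orientations.

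For (1), I would first note that $X_1\cap X_2$ is a subcomplex (intersection of subcomplexes) which is full (intersection of full subcomplexes). To see it is a \emph{standard} subcomplex of type $\Gamma_1\cap\Gamma_2$, pick a vertex $x\in X_1\cap X_2$; after translating by $x^{-1}$ we may assume $x$ is the identity element, so $X_1$ is the copy of $X_{\Gamma_1}$ based at $1$ and likewise $X_2$ is based at $1$ (here one uses that each standard subcomplex containing a vertex $v$ is a left translate $vX_{\Gamma'}$, and the component of the preimage of $S_{\Gamma'}$ through $1$ is exactly the parabolic $A_{\Gamma'}$ together with all cells on it). Then $X^{(0)}_1\cap X^{(0)}_2 = A_{\Gamma_1}\cap A_{\Gamma_2} = A_{\Gamma_1\cap\Gamma_2}$ by Theorem~\ref{thm:lek}(2). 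Since both $X_1\cap X_2$ and the standard subcomplex $X_{\Gamma_1\cap\Gamma_2}$ based at $1$ are full subcomplexes of $X_\Gamma$ with the same vertex set, Lemma~\ref{lem:same source} (via the fullness machinery, Lemma~\ref{lem:full}) forces them to coincide; connectedness of the standard subcomplex is inherited from $X_{\Gamma_1\cap\Gamma_2}$ being connected.

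For (2), the cell $C$ of type $\Gamma_1$ lies in some (translate of a) standard subcomplex $X_1'$ of type $\Gamma_1$, and $C\cap X_2 \subset X_1'\cap X_2$, which by (1) is a standard subcomplex of type $\Gamma_1\cap\Gamma_2$. So $C\cap X_2$ is the intersection of a single cell with a standard subcomplex of type $\Gamma_1\cap\Gamma_2$ inside that subcomplex; transporting via the identification $X_1'\cong X_{\Gamma_1}$, this becomes the intersection of the cell $C$ (now a Coxeter cell in $X_{\Gamma_1}$ based at some vertex) with the base standard subcomplex of type $\Gamma_1\cap\Gamma_2$. One then checks, using Lemma~\ref{lem:isometric}(2) to describe $C^{(0)}$ as an interval $[c,c']_{\preccurlyeq}$ and Theorem~\ref{thm:lattice} to intersect with the parabolic $A_{\Gamma_1\cap\Gamma_2}$, that the resulting vertex set is again an interval hence the vertex set of a Coxeter cell (a face of $C$); fullness (Lemma~\ref{lem:full}) upgrades this to an equality of subcomplexes and Lemma~\ref{lem:isometric}(3) exhibits the required face. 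For (3), the interval $[a,b]_{\preccurlyeq}$ in $(A_\Gamma,\preccurlyeq)$ consists of all $z$ with a positive path from $a$ through $z$ to $b$; since $a,b\in X^{(0)}_1 = A_{\Gamma_1}$ (after translating, $= A_{\Gamma_1}$), Theorem~\ref{thm:convexity} says any geodesic of $X^{(1)}_\Gamma$ between vertices of $X_1$ stays in $X_1$, and positive paths are geodesic, so the whole interval lies in $X^{(0)}_1$. Finally (4) is essentially the compatibility of prefix orders under the embedding $A_{\Gamma_1}\hookrightarrow A_\Gamma$: if $y = xz$ with $z\in A^+_{\Gamma_1}$ then $z\in A^+_\Gamma$, giving $x\preccurlyeq y$; conversely if $x\preccurlyeq y$ in $A_\Gamma$ with $x,y\in A_{\Gamma_1}$, the positive path witnessing this lies in $X_1$ by (3) (equivalently Theorem~\ref{thm:convexity}), so $z = x^{-1}y$ is read off a positive word in the generators of $\Gamma_1$, i.e.\ $z\in A^+_{\Gamma_1}$ and $x\preccurlyeq_{\Gamma_1} y$.

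The main obstacle I anticipate is item (2): one must be careful that the intersection of a Coxeter cell with a parabolic subgroup's vertex set is again an \emph{interval} of the correct form inside the Coxeter cell — this is where the lattice structure of Theorem~\ref{thm:lattice} and the precise translation between the prefix order on $A_\Gamma$ and the weak order $\preccurlyeq_C$ on the cell (Lemma~\ref{lem:isometric}(2)) must be combined, and where one needs that the type of the face obtained is exactly $\Gamma_1\cap\Gamma_2$ rather than merely a subgraph of it. The other three items are comparatively formal consequences of fullness, Theorem~\ref{thm:lek}, and Theorem~\ref{thm:convexity}.
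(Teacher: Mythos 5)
Your treatments of (1), (3) and (4) are essentially the paper's: (1) is exactly the combination of Theorem~\ref{thm:lek} (matching of vertex sets) with fullness of standard subcomplexes, and (3)--(4) are spelled-out versions of the paper's one-line appeal to Theorem~\ref{thm:convexity} (positive paths are geodesics, geodesics between vertices of $X_1$ stay in $X_1$). One small correction: the fullness statement you need in (1) is Lemma~\ref{lem:standard subcomplex is full}, not Lemma~\ref{lem:same source}/Lemma~\ref{lem:full}, which concern cells; the argument itself (two full subcomplexes with the same vertex set coincide) is fine.

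Item (2), however, contains a genuine gap, and it is exactly the step you flag as "to be checked." Your plan is to intersect the interval $C^{(0)}=[c,c']_{\preccurlyeq}$ with the vertex set of the type-$(\Gamma_1\cap\Gamma_2)$ standard subcomplex and invoke Theorem~\ref{thm:lattice}; but that vertex set is a coset $gA_{\Gamma_1\cap\Gamma_2}$, which is not an interval in $(A_{\Gamma_1},\preccurlyeq)$, so the lattice property gives no control over this intersection. Moreover, even granting that the intersection is an interval, the inference "interval, hence the vertex set of a face of $C$" is false in general: inside an oriented Coxeter cell most intervals are not vertex sets of faces (only intervals of the form $[d,d\delta]$ with $\delta$ the lift of the longest element of the corresponding parabolic are), which is why the paper needs a separate argument both for the type being exactly $\Gamma_1\cap\Gamma_2$ and for connectedness. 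The paper's route is different: since $C\cap X_2\subset X_1\cap X_2$, which by (1) is a standard subcomplex of type $\Gamma_1\cap\Gamma_2$, fullness forces $C\cap X_2$ to be a disjoint union of faces of $C$ of that type; connectedness is then obtained by taking a shortest edge path in $C^{(1)}$ between two putative components, observing it is geodesic in $X^{(1)}_\Gamma$ by Lemma~\ref{lem:isometric}(1), and concluding from Theorem~\ref{thm:convexity} that it lies in $X_2$, a contradiction. Your sketch never uses the convexity of $X_2$ in (2), and without it (or an equivalent Garside-theoretic computation about cosets of parabolic submonoids meeting the set of divisors of $\Delta$) the claim does not follow from the cited tools.
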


\begin{proof}
By Theorem~\ref{thm:lek}, there exists a standard subcomplex $X_0\subset X_\Gamma$ such that $X^{(0)}_0=X^{(0)}_1\cap X^{(0)}_2$. Then (1) follows from Lemma~\ref{lem:standard subcomplex is full}. Now we prove (2). Let $X_1$ be the standard subcomplex of type $\Gamma_1$ containing $C$. Then $C\cap X_2$ is contained in $X_1\cap X_2$, which is a standard subcomplex of type $\Gamma_1\cap \Gamma_2$. This together with Lemma~\ref{lem:standard subcomplex is full} imply that $C\cap X_2$ is a disjoint union of faces of $C$ of type $\Gamma_1\cap \Gamma_2$. It remains to show $C\cap X_2$ is connected. Suppose the contrary holds. Pick a shortest geodesic edge path $\omega\subset C$ (with respect to the path metric on $C^{(1)}$) traveling from a vertex in one component of $C\cap X_2$ to another component. Then $\omega$ is also a geodesic path in $X^{(1)}_\Gamma$ by Lemma~\ref{lem:isometric}. Since two end points of $\omega$ are in $X_2$, we have $\omega\subset X_2$ by Theorem~\ref{thm:convexity}, which yields a contradiction. Assertion (3) and (4) follow from Theorem~\ref{thm:convexity}.
\end{proof}

\section{Helly graphs and cell-Helly complexes}
\label{s:Helly}
See the beginning of Section~\ref{s:preliminary} for our convention on graphs. A \emph{clique} of a graph is a complete subgraph. A \emph{maximal clique} is a clique not properly contained in another clique. Each graph is endowed with the path metric such that each edge has length 1. A \emph{ball} centered at $x$ with radius $r$ in a graph $\Gamma$ is the full subgraph spanned by all vertices at distance $\le r$ from $x$. As for Helly graphs we follow usually the notation from \cite{weaklymodular,Helly}. 

\begin{definition}
	\label{d:Helly}
	A family $\mathcal{F}$ of subsets of a set satisfies the (\emph{finite}) \emph{Helly property}
	if for any (finite) subfamily $\mathcal{F}'\subseteq \mathcal{F}$ of pairwise intersecting subsets the intersection $\bigcap \mathcal{F}'$ is nonempty. 
	
	A graph is (\emph{finitely}) \emph{clique Helly} if the family of all maximal cliques, as subsets of the vertex sets of the graph, satisfies the (finite) Helly property.
	
	A graph is (\emph{finitely}) \emph{Helly} if the family of all balls, as subsets of the vertex sets of the graph, satisfies the (finite) Helly property.
\end{definition}

The notion of clique Helly is local and the notion of Helly is global. Our main tool for proving Hellyness of some graphs is the following local-to-global characterization.

The \emph{clique complex} of a graph $\Gamma$ is the flag completion of $\Gamma$. The \emph{triangle complex} of $\Gamma$ is the $2$-skeleton of the clique complex, that is, the $2$-dimensional simplicial complex obtained by spanning $2$-simplices (that is, triangles) on all $3$-cycles (that is, $3$-cliques) of $\Gamma$.

\begin{theorem}
	\label{thm:Helly}
	(i) (\cite[Theorem 3.5]{weaklymodular}) Let $\Gamma$ be a simplicial graph. Suppose that $\Gamma$ is finitely clique Helly, and that the clique  complex of $\Gamma$ is simply-connected. Then $\Gamma$ is finitely Helly. 
	
	(ii) (\cite[Proposition 3.1.2]{polat2001convexity})	If in addition $\Gamma$ does not have infinite cliques, then $\Gamma$ is Helly.
\end{theorem}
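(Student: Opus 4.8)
The plan is to treat the two implications separately, since the real content lies in the first: that finite clique Helliness plus simple connectedness of the clique complex forces the \emph{finite} Helly property for balls. Granting this, the upgrade to the full Helly property under the extra hypothesis that $\Gamma$ has no infinite clique is a separate and more elementary limiting argument: one exhausts an arbitrary family of pairwise intersecting balls by finite subfamilies, applies the finite Helly property to each, and uses the absence of infinite cliques to see that the relevant finite intersections of balls are themselves finite sets, so that the ordinary finite intersection property already produces a common vertex (this is essentially \cite{polat2001convexity}). I therefore concentrate on the finite statement, which is a local-to-global argument in the spirit of the theory of systolic and weakly modular complexes.

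For the finite statement I would argue by contradiction using a minimal counterexample. Suppose $\Gamma$ is finitely clique Helly with simply connected clique complex but fails the finite Helly property, and choose a family $\mathcal{F}=\{B_{r_1}(x_1),\dots,B_{r_n}(x_n)\}$ of pairwise intersecting balls with $\bigcap\mathcal{F}=\emptyset$, minimizing first $n$ and then the total radius $r_1+\cdots+r_n$. If some $r_i=0$, then $B_{r_i}(x_i)=\{x_i\}$ meets every $B_{r_j}(x_j)$, which forces $x_i\in B_{r_j}(x_j)$ for all $j$ and hence $x_i\in\bigcap\mathcal{F}$, a contradiction; so every $r_i\ge 1$. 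Likewise, if replacing some $B_{r_i}(x_i)$ by $B_{r_i-1}(x_i)$ kept the family pairwise intersecting, then minimality of the total radius would hand us a common vertex of the smaller family, and that vertex would lie in $\bigcap\mathcal{F}$; so for each $i$ there is a $j$ with $d(x_i,x_j)=r_i+r_j$, i.e.\ $B_{r_i}(x_i)$ is ``tight'' against $B_{r_j}(x_j)$, meeting it only in vertices lying on geodesics between the two centers. This rigidity is what one then plays against the topology.

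Next I would convert the absence of a common point into a loop in $\Gamma$. Choosing intersection vertices $u_{ij}\in B_{r_i}(x_i)\cap B_{r_j}(x_j)$ together with geodesics between them and toward the centers $x_i$, one assembles a closed edge path $\gamma$ in $\Gamma$ that ``witnesses'' the emptiness of $\bigcap\mathcal{F}$. Since the clique complex of $\Gamma$ is simply connected (equivalently, so is its triangle complex), $\gamma$ bounds a simplicial disc diagram $D$ in the triangle complex of $\Gamma$; fix one of minimal area. Now the finite clique Helly hypothesis enters through its well-known local reformulation in terms of how triangles sit inside their neighborhoods: at an interior vertex $p$ of $D$ it supplies a vertex ``coning over'' the link of $p$ in $D$, and this cone vertex powers an elementary homotopy across the star of $p$ (a fold, or a push of the diagram through $p$) that fixes $\gamma$ but strictly decreases the area. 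Minimality then forces $D$ to be degenerate, and reading off what degeneracy means against the tight configuration of $\mathcal{F}$ produces a vertex lying simultaneously in all the balls $B_{r_i}(x_i)$, the desired contradiction.

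The step I expect to be the genuine obstacle is the last one, the disc-diagram surgery. One must choose the witnessing loop $\gamma$ carefully enough that the intermediate diagrams still record the ball configuration, isolate precisely which elementary move clique Helliness licenses at interior \emph{and} boundary vertices, and verify that the chosen complexity --- area, refined if necessary by a secondary quantity such as boundary length or the number of diagram vertices at maximal distance from some center --- strictly drops at each move. This is standard but delicate bookkeeping of the type carried out in \cite{weaklymodular}, and I would follow that template rather than reproduce it in detail.
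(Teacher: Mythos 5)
This statement is not proved in the paper at all: it is imported verbatim from \cite[Theorem 3.5]{weaklymodular} and \cite[Proposition 3.1.2]{polat2001convexity}, so the only question is whether your sketch is a correct self-contained argument, and it is not. The clearest problem is your treatment of the second assertion (finitely Helly plus no infinite cliques implies Helly). You justify it by claiming that the absence of infinite cliques makes the relevant finite intersections of balls finite, so that the ordinary finite intersection property yields a common vertex. That is false: a graph with no infinite clique can have infinite balls and infinite intersections of balls --- already in the star $K_{1,\infty}$ the ball of radius $1$ around the center is infinite, and the intersection of two balls around adjacent vertices of any graph of infinite degree without infinite cliques is typically infinite. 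So no compactness is available in this naive form; getting from the finite Helly property to the full one under the no-infinite-clique hypothesis is exactly the nontrivial content of Polat's proposition, and your one-line reduction does not prove it.

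For the main implication (finitely clique Helly plus simply connected clique complex implies finitely Helly), what you give is a plan rather than a proof. You never construct the ``witnessing'' loop $\gamma$, nor explain why emptiness of an intersection of pairwise intersecting balls is reflected by a homotopical (i.e.\ essentially $1$-dimensional) obstruction at all --- making it so is the heart of the matter, not a routine step. Likewise, the elementary move that finite clique Helliness is supposed to license at interior and boundary vertices of a minimal disc diagram is never specified, and the final step ``reading off what degeneracy means \ldots produces a vertex lying simultaneously in all the balls'' is asserted, not argued. You explicitly defer this bookkeeping to the template of \cite{weaklymodular}; but that template is precisely the cited proof of the theorem, so the core of the argument is missing from your proposal. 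As written, the proposal amounts to a restatement of the strategy of the references together with an incorrect shortcut for the Polat part, rather than a proof.
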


Parallel to this theorem, a local-to-global result for injective metric spaces is obtained in \cite{miesch2018cartan}.
\begin{lemma}
	\label{lem:helly criterion1}
	Let $X$ be a set and let $\{X_i\}_{i\in I}$ be a family of subsets of $X$. Define a simplicial graph $Y$ whose vertex set is $X$, and two points are adjacent if there exists $i$ such that they are contained in $X_i$. Suppose that
	\begin{enumerate}
		\item there are no infinite cliques in $Y$;
		\item $\{X_i\}_{i\in I}$ has finite Helly property, i.e.\ any finite family of  pairwise intersecting elements in $\{X_i\}_{i\in I}$ has nonempty intersection;
		\item for any triple $X_{i_1},X_{i_2},X_{i_3}$ of pairwise intersecting subsets in $\{X_i\}_{i\in I}$, there exists 
		$i_0 \in I$ such that 
		\begin{align*}
		(X_{i_1}\cap X_{i_2})\cup(X_{i_2}\cap X_{i_3})\cup (X_{i_3}\cap X_{i_1})\subseteq X_{i_0}.
		\end{align*}
	\end{enumerate}
	Then $Y$ is finitely clique Helly.
\end{lemma}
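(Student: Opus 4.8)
The goal is to verify that the maximal cliques of $Y$ satisfy the finite Helly property. So I would start with a finite family $\{K_j\}_{j=1}^n$ of maximal cliques of $Y$ that pairwise intersect, and aim to produce a common vertex. The natural first move is to translate the combinatorial data about cliques into data about the family $\{X_i\}_{i\in I}$, where hypothesis (2) (finite Helly for that family) and hypothesis (3) (the "triangle condition" producing a single $X_{i_0}$ swallowing the pairwise intersections of any three members) are the only tools available. The link between the two is that an edge of $Y$, by definition, lies in some $X_i$; more usefully, I expect one needs: \emph{every clique of $Y$ is contained in some single $X_i$}. This is the crux, and I would prove it first as the main internal step.

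\textbf{Step 1: every clique of $Y$ is contained in a single $X_i$.} Proceed by induction on the size of the clique $K=\{v_0,\dots,v_k\}$; note $K$ is finite by hypothesis (1). For $|K|\le 2$ this is the definition of adjacency in $Y$. For the inductive step, I would look at the three sub-cliques obtained by deleting one vertex at a time (or, more economically, pick three vertices $v_a,v_b,v_c$ and the sub-cliques $K\setminus\{v_a\}$, $K\setminus\{v_b\}$, $K\setminus\{v_c\}$), get enveloping sets $X_{i_1},X_{i_2},X_{i_3}$ from the inductive hypothesis, observe that these pairwise intersect (each pair shares, e.g., $K$ minus two vertices, which is nonempty once $k$ is large enough; the small cases $k=2,3$ should be handled directly using (3) with the three edges / three $2$-sub-cliques), and then apply hypothesis (3) to obtain a single $X_{i_0}$ containing $(X_{i_1}\cap X_{i_2})\cup(X_{i_2}\cap X_{i_3})\cup(X_{i_3}\cap X_{i_1})$. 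The point is that this union contains all of $K$: each vertex $v\in K$ lies in at least two of $K\setminus\{v_a\},K\setminus\{v_b\},K\setminus\{v_c\}$, hence in at least two of $X_{i_1},X_{i_2},X_{i_3}$, hence in one of the pairwise intersections. So $K\subset X_{i_0}$, completing the induction.

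\textbf{Step 2: deduce finite clique Helly.} Given pairwise intersecting maximal cliques $K_1,\dots,K_n$ of $Y$, apply Step 1 to each to get $X_{i_1},\dots,X_{i_n}$ with $K_j\subset X_{i_j}$. Since $K_j\cap K_{j'}\neq\emptyset$ for all $j,j'$, the sets $X_{i_1},\dots,X_{i_n}$ pairwise intersect, so by hypothesis (2) there is a vertex $x\in\bigcap_{j=1}^n X_{i_j}$. Now $x$ is adjacent to every vertex of each $K_j$ (it shares the set $X_{i_j}$ with each of them), so $K_j\cup\{x\}$ is a clique; by maximality of $K_j$, $x\in K_j$. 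Hence $x\in\bigcap_{j=1}^n K_j$, which is therefore nonempty. This gives finite clique Helly, and condition (1) guarantees there are no infinite cliques, so the statement is exactly as required.

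\textbf{Anticipated main obstacle.} The delicate part is Step 1, specifically handling the low-dimensional base cases of the induction (cliques on three and four vertices) cleanly, since hypothesis (3) is phrased in terms of \emph{three} sets whose pairwise intersections need not be single vertices; one must be careful that the three chosen sub-cliques genuinely pairwise intersect before invoking (3), and that their pairwise intersections collectively cover the whole clique. Once the bookkeeping in Step 1 is set up correctly, Steps 2 is routine. I do not expect simple-connectivity or any metric input to be needed here — this lemma is purely about the nerve-like combinatorics of the covering family $\{X_i\}$.
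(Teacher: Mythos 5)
Your proposal is correct and follows essentially the same route as the paper: an induction on clique size using hypothesis (3) to show every clique of $Y$ lies in a single $X_i$, followed by an application of hypothesis (2) to pairwise intersecting maximal cliques (the paper concludes by noting a maximal clique must equal some $X_i$, while you absorb the common point into each $K_j$ by maximality --- a negligible difference). Note also that your worry about the small cases is unfounded, since for a triangle the three $2$-element subcliques already pairwise intersect in single vertices, so the uniform inductive step applies from size $3$ onward, exactly as in the paper.
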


\begin{proof}
	Let $S\subset X$ be a subset. We claim if $S$ is the vertex set of a clique in $Y$, then there exists $i\in I$ such that $S\subset X_i$. We prove it by induction on $|S|$. Note that $|S|<\infty$ by assumption (1), and the case $|S|=2$ is trivial. Suppose $|S|=n\ge 3$. Let $\{x_1,x_2,x_3\}$ be three pairwise distinct vertices in $S$ and let $S_i=S\setminus\{x_i\}$. It follows from induction assumption that there exists $X_i$ such that $S_i\subset X_i$. Note that $\{X_{i_1},X_{i_2},X_{i_3}\}$ pairwise intersect. Thus by assumption (3), there exists $X_{i_0}\in \{X_i\}_{i\in I}$ such that $S\subset (X_{i_1}\cap X_{i_2})\cup(X_{i_2}\cap X_{i_3})\cup (X_{i_3}\cap X_{i_1})\subset X_{i_0}$. Hence the claim follows. The claim implies that if $S$ is the vertex set of a maximal clique in $Y$, then $S=X_i$ for some $i\in I$. Now the lemma follows from assumption (2).
\end{proof}


The lemma above justifies the following definition that will be our main tool for
 showing that some groups act nicely on Helly graphs. A subcomplex of $X$ is \emph{finite} it has only finitely many cells.

\begin{definition}
	\label{d:cellHelly}
	Let $X$ be a combinatorial complex (cf.\ \cite[Chapter I.8.A]{BridsonHaefliger1999}) and let $\{X_i\}_{i\in I}$ be a family of finite full subcomplexes covering $X$. We call $(X,\{X_i\})$ (or simply $X$, when the family $\{X_i\}$ is evident) a \emph{generalized cell Helly} complex if the following conditions are satisfied:
	\begin{enumerate}
		\item each intersection $X_{i_1}\cap X_{i_2}\cap \cdots \cap X_{i_k}$ is either empty or connected and simply connected (in particular, all members of $\{X_i\}$ are connected and simply connected);
		\item the family $\{X_i\}_{i\in I}$ has finite Helly property, i.e.\ any finite family of pairwise intersecting elements in $\{X_i\}_{i\in I}$ has nonempty intersection;
		\item for any triple $X_{i_1},X_{i_2},X_{i_3}$ of pairwise intersecting subcomplexes in $\{X_i\}_{i\in I}$, there exists 
		$i_0\in I$ such that 
		\begin{align*}
		(X_{i_1}\cap X_{i_2})\cup(X_{i_2}\cap X_{i_3})\cup (X_{i_3}\cap X_{i_1})\subseteq X_{i_0}.
		\end{align*}
	\end{enumerate}
	The subcomplexes $X_i$ are called \emph{generalized cells} of $X$.
\end{definition}

In this article we will only deal with the following special case of generalized cell Helly complex.

\begin{definition}
	\label{d:cellhelly}
	Suppose $X$ is a combinatorial complex such that each of its closed cell is embedded and each closed cell is a full subcomplex of $X$. Let $\{X_i\}_{i\in I}$ be the collection of all closed cells in $X$. We say $X$ is \emph{cell Helly} if $(X,\{X_i\})$ satisfies Definition~\ref{d:cellHelly}.
\end{definition}

A natural class of cell Helly complexes is described in the lemma below, whose proof is left to the reader. 
\begin{lemma}
Suppose $X$ is a simply-connected cube complex such that each cube in $X$ is embedded and any intersection of two cubes of $X$ is a sub-cube. Then $X$ is cell Helly if and only if $X$ is a $CAT(0)$ cube complex.
\end{lemma}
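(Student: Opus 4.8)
The plan is to verify the three conditions of Definition~\ref{d:cellhelly} for a simply-connected cube complex $X$ whose cubes embed and intersect in subcubes, and show this is equivalent to $X$ being $CAT(0)$. For the forward direction, assume $X$ is $CAT(0)$. Condition (1) is immediate: the intersection of any family of cubes sharing a common vertex is itself a subcube (one uses that cubes intersect in subcubes and a straightforward induction on the number of cubes), and subcubes are convex, hence connected and simply connected; if the intersection is empty there is nothing to check. Condition (3) follows from (2) together with the fact that a union of three pairwise-intersecting faces of a common larger cube is contained in that cube --- so the real content is condition (2), the finite Helly property for the family of cubes.

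For condition (2), I would argue by induction on the size of a pairwise-intersecting finite family $\{X_{i_1},\dots,X_{i_n}\}$ of cubes, the base case $n=2$ being trivial. The key is the case $n=3$. Here the standard tool is Gromov's link condition / the combinatorial convexity of cubes in a $CAT(0)$ cube complex: a cube $Q$ is a convex subset of $X$ in the $\ell^1$ (and $\ell^2$, and $\ell^\infty$) metric on the $0$-skeleton, and more precisely the vertex set of a cube is \emph{gated} in the $1$-skeleton. Given three pairwise-intersecting cubes $Q_1,Q_2,Q_3$, pick $x\in Q_2\cap Q_3$, let $y$ be the gate of $x$ in $Q_1$, and let $y_j$ ($j=2,3$) be the gate of $x$ in the subcube $Q_1\cap Q_j$. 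Exactly as in the proof of Lemma~\ref{lem:Coxeter helly}, one has $d(x,y_j)=d(x,y)+d(y,y_j)$, so the geodesic from $x$ to $y_j$ passes through $y$; since $x,y_j\in Q_j$ and $Q_j$ is convex, $y\in Q_j$, forcing $y=y_j$. Hence $y\in Q_1\cap Q_2\cap Q_3$. For $n>3$ one replaces $\{Q_2,\dots,Q_n\}$ by $\{Q_1\cap Q_i\}_{i=2}^n$, which are subcubes (again using that cubes meet in subcubes) that pairwise intersect by the $n=3$ case, and applies the inductive hypothesis.

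For the converse, suppose $X$ is cell Helly in the sense of Definition~\ref{d:cellhelly}. By Theorem~\ref{thm:Helly} (or directly via Lemma~\ref{lem:helly criterion1}, whose hypotheses are exactly the cell Helly conditions) the $1$-skeleton of the cube complex is a clique-Helly graph with simply-connected clique complex, hence Helly; but it is classical (Bandelt--Chepoi and others) that a cube complex whose $1$-skeleton is a Helly graph, equivalently a median graph, with the combinatorial structure of $X$, is $CAT(0)$. Concretely, condition (1) of Definition~\ref{d:cellHelly} applied to pairs of cubes already forces the complex to contain no "missing corners", i.e.\ it satisfies Gromov's link condition, and simple-connectedness is assumed; by Gromov's theorem $X$ is $CAT(0)$. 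The main obstacle is organizing the $n=3$ step cleanly using gatedness of cubes in a $CAT(0)$ cube complex (the analogue of Theorem~\ref{thm:Coxeter convexity} parts (1)--(3)), and, on the converse side, pinning down that the cell Helly conditions are precisely equivalent to the link condition rather than merely implied by it --- but since the lemma is stated as an exercise left to the reader, a pointer to the median-graph characterization of $CAT(0)$ cube complexes together with the Lemma~\ref{lem:helly criterion1} mechanism should suffice.
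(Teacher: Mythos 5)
The paper leaves this lemma as an exercise, so there is no official proof to compare against; judged on its own terms, your argument has a genuine gap in each direction, both traceable to the same misreading of where Gromov's flagness condition lives in Definition~\ref{d:cellHelly}. In the forward direction, your treatment of condition (3) is circular: you say it "follows from (2) together with the fact that a union of three pairwise-intersecting faces of a common larger cube is contained in that cube," but producing that common larger cube is exactly what (3) asserts, and it does not follow from the Helly property (2). The correct argument, available since you are assuming $CAT(0)$, is via flag links: at a vertex of the (nonempty, by (2)) triple intersection, the edges of $Q_1\cap Q_2$, $Q_2\cap Q_3$, $Q_3\cap Q_1$ pairwise span squares inside the $Q_i$, so by flagness they span a cube, which then contains the three pairwise intersections. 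You never invoke the link condition, and without it the step is vacuous. (Your gatedness argument for condition (2), modelled on Lemma~\ref{lem:Coxeter helly}, is fine, as is condition (1).)

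The converse is where the error is most serious. The claim that "condition (1) applied to pairs of cubes already forces the complex to satisfy Gromov's link condition" is false: under the lemma's hypotheses the intersection of two cubes is already a subcube, so (1) for pairs is automatic, and it does not detect missing corners. Concretely, the $2$-skeleton of the $3$-cube has embedded cubes meeting in subcubes, is simply connected, and satisfies conditions (1) and (2) (any pairwise-intersecting family of its faces shares a vertex), yet its vertex links are empty $3$-cycles and it is not $CAT(0)$; what fails there is precisely condition (3). As Remark~\ref{rmk:no corner} indicates, (3) is the substitute for flagness, and the converse should run through it: given edges $e_1,\dots,e_k$ at a vertex pairwise spanning squares, apply (3) inductively (e.g.\ to the cubes spanned by $\{e_1,\dots,e_{k-1}\}$, $\{e_2,\dots,e_k\}$ and the square on $e_1,e_k$) to place all $e_i$ in a common cube, hence in a common face, giving flag links; with simple connectivity, Gromov's criterion yields $CAT(0)$. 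Your alternative route via Helly graphs also misfires: the graph produced by Lemma~\ref{lem:helly criterion1} is the $1$-skeleton of the thickening (cubes become cliques), not the $1$-skeleton of $X$, and "Helly graph" is not equivalent to "median graph," so the appeal to the median-graph characterization of $CAT(0)$ cube complexes does not go through as stated.
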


\begin{remark}
	\label{rmk:no corner}
	It can be seen here that the condition (3) from Definition~\ref{d:cellHelly} plays a role of Gromov's flagness condition. Intuitively, 
	it tells that all the corners (of possible positive curvature) are ``filled" (and hence curvature around them is nonpositive).
\end{remark}

Pride \cite{Pride1986} showed that $2$-dimensional Artin groups of type FC are C(4)-T(4) small cancellation groups. The following lemma together with Theorem~\ref{t:cHFC} can be seen as an extension of Pride's result to
higher dimensions. We refer to \cite{Pride1986,Hoda-q} for the definition of C(4)-T(4) small cancellation conditions. Here, we assume that cells are determined by their attaching maps (this corresponds to the usual small cancellation theory, for groups without torsion). In particular, by \cite[Proposition 3.5]{Hoda-q} it means that the intersection of two cells is either empty or a vertex or an
interval.

\begin{lemma}
	\label{lem:C4T4}
	Suppose $X$ is a $2$-dimensional simply-connected combinatorial complex such that each closed cell is embedded. Then $X$ is cell Helly if and only if $X$ satisfies the C(4)-T(4) small cancellation conditions.
\end{lemma}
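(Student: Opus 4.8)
The statement is an equivalence between two local conditions for a $2$-dimensional simply-connected combinatorial complex $X$ in which every closed cell is embedded: cell Helly, and C(4)-T(4) small cancellation. Since $X$ is simply connected, the simple connectivity requirement in Definition~\ref{d:cellHelly}(1) is automatic for $X$ itself, and for a $2$-complex the only interesting generalized cells are vertices, edges, and $2$-cells; intersections of distinct cells are (by the remark on C(4)-T(4) and \cite[Proposition 4.19]{Hoda-q}) empty, a vertex, or an interval, so condition (1) of Definition~\ref{d:cellHelly} (connected and simply connected intersections) is equivalent to the assertion that no two distinct closed cells meet in a disconnected set or in a circle. My plan is to prove each implication separately, translating both conditions into statements about how pairs and triples of $2$-cells overlap along their boundary paths.

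\textbf{From C(4)-T(4) to cell Helly.} Assume $X$ is C(4)-T(4). I would verify the three conditions of Definition~\ref{d:cellHelly} for the family of all closed cells. Condition (1): T(4) forbids essential triangles of $2$-cells meeting pairwise along edges, and C(4) controls the length of pieces; together with the hypothesis that closed cells are embedded, a standard small-cancellation argument (a disc diagram / innermost-region argument over $X$) shows that two distinct $2$-cells cannot meet in two disjoint arcs or in a loop, so pairwise intersections are connected, and triple intersections of pairwise-intersecting cells reduce to a single vertex or edge; hence all intersections are connected and simply connected. Condition (2), the finite Helly property: given pairwise intersecting cells, if all are $0$- or $1$-cells this is elementary (two edges meet in a vertex, and a collection of edges pairwise sharing a vertex in a simplicial-type complex shares a common vertex unless they form a triangle, in which case T(3)$\supset$T(4)... careful here: actually three edges pairwise meeting could bound a $2$-cell, and then the common "cell" is that $2$-cell — so I should phrase Helly for the family of \emph{all} cells, where the three edges are contained in the $2$-cell they bound); if some member is a $2$-cell, its boundary has length $\ge 4$ and the pieces where the others attach must, by C(4), leave a common vertex. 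Condition (3): for a pairwise-intersecting triple $X_{i_1},X_{i_2},X_{i_3}$, the union of the three pairwise intersections is a subcomplex of small diameter living inside the boundary of whichever cell has largest dimension; T(4) is exactly what guarantees this union is not a full "triangle" of three disjoint edges but collapses into the boundary of a single cell $X_{i_0}$ (possibly one of the three, or the $2$-cell they jointly bound). This is the step I expect to be the main obstacle, since it requires carefully enumerating the possible dimension-patterns of the triple and invoking T(4) in the form "no three $2$-cells pairwise sharing nondegenerate arcs form an essential triangle."

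\textbf{From cell Helly to C(4)-T(4).} Conversely, assume $X$ is cell Helly. To get T(4): suppose for contradiction that three $2$-cells $c_1,c_2,c_3$ form an essential triangle, i.e.\ pairwise meet along nondegenerate pieces but have no common point. The three pairwise intersections are nonempty, so the triple $\{c_1,c_2,c_3\}$ is pairwise intersecting; the finite Helly property (2) then forces $c_1\cap c_2\cap c_3\ne\emptyset$, contradicting essentiality — giving T(4) directly (and T(3) is vacuous for $2$-complexes with embedded cells). To get C(4): suppose a $2$-cell $c$ has an interior relator-word that is a product of at most three pieces, i.e.\ $\partial c$ is covered by the attaching regions of at most three other $2$-cells $c_1,c_2,c_3$. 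After arranging these to pairwise intersect (consecutive pieces along $\partial c$ overlap at vertices, and one uses condition (1) to see intersections are arcs/vertices), condition (3) supplies a cell $X_{i_0}$ containing $(c_1\cap c_2)\cup(c_2\cap c_3)\cup(c_3\cap c_1)$; analyzing which cell $X_{i_0}$ can be — it must contain three vertices of $\partial c$ that are "spread out," forcing $X_{i_0}$ to be $c$ itself or contradicting that $c_i$'s are pieces (proper sub-arcs) — yields that $\partial c$ cannot be a union of fewer than four pieces, which is C(4). I would phrase this contrapositively and let the precise bookkeeping of piece-lengths mirror the argument in \cite{Pride1986,Hoda-q}. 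Throughout, the hypothesis that each closed cell is embedded and a full subcomplex (inherited here just from "embedded" plus $2$-dimensionality) is what lets me pass freely between "combinatorial intersection of cells" and "overlap of attaching maps," which is the bridge between the two formalisms.
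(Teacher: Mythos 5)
Your reverse direction contains a step that fails: the T(4) argument. The T(4) condition (in the sense of \cite{Pride1986,Hoda-q}, which is what this lemma asserts) forbids a cyclic configuration of three $2$-cells around a common vertex $v$ in which consecutive cells meet along nontrivial arcs through $v$; in any such configuration the triple intersection \emph{contains} $v$ and is therefore nonempty. Your ``essential triangle'' is defined to have empty triple intersection, so your appeal to the finite Helly property (condition (2) of Definition~\ref{d:cellHelly}) never touches the genuinely forbidden configuration: you would declare T(4) verified while three cells pairwise sharing edges at a common vertex remain perfectly consistent with condition (2). What rules this out is condition (3), not (2): if $C_0\cap C_1$, $C_1\cap C_2$, $C_2\cap C_0$ are nontrivial intervals all containing $v$, their union is a graph with a vertex of degree at least $3$, and no closed cell of a $2$-complex can contain it, since any proper subcomplex of a polygon is a subgraph of its boundary circle. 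This is exactly how the paper proves T(4), and without an argument of this type your proof of the ``cell Helly $\Rightarrow$ C(4)-T(4)'' implication is incorrect, not merely incomplete.

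The remaining parts are plans rather than proofs, and in one place the plan as stated would not go through. For ``C(4)-T(4) $\Rightarrow$ cell Helly'' the paper has a one-line argument: by \cite[Proposition 4.22]{Hoda-q}, in a C(4)-T(4) complex any pairwise intersecting triple of cells satisfies $C_i\cap C_j\subseteq C_k$ for $\{i,j,k\}=\{0,1,2\}$, from which conditions (2) and (3) are immediate (and (1) comes from the standing convention via \cite[Proposition 4.19]{Hoda-q}). You propose to redo this with disc diagrams but defer precisely the two substantive points --- common points for pairwise intersecting families and condition (3), which you yourself flag as ``the main obstacle'' --- so this direction is not established as written; it could likely be completed, but citing or reproving the $C_i\cap C_j\subseteq C_k$ statement is the efficient route. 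Finally, in your C(4) argument the claim that a cell containing the three ``junction'' vertices must be $c$ itself (or contradict properness of the pieces) does not follow: a cell meeting $c$ in one long boundary arc can contain three spread-out vertices of $\partial c$. The paper avoids this by treating the two-piece case directly --- there the union $(C\cap C_0)\cup(C_0\cap C_1)\cup(C_1\cap C)$ is three intervals glued along endpoints, a graph that cannot lie in a single cell --- and then reducing the three-piece case to the two-piece case by using the Helly property to produce a common vertex of all four cells, through which all three covering arcs of $\partial C$ pass.
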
	

\begin{proof}
If $X$ is a C(4)-T(4) complex then, by \cite[Proposition 3.8]{Hoda-q}, for any pairwise intersecting cells
$C_0,C_1,C_2$ we have $C_i\cap C_j\subseteq C_k$, for $\{i,j,k\} = \{0,1,2\}$. It follows that $X$ is cell Helly.

Now suppose $X$ is cell Helly. 
Since nonempty intersections of cells are connected and simply connected, such intersections must be vertices or intervals.

We check the C(4) condition. 
First suppose there is a cell $C$ with the boundary covered by two intervals $C\cap C_0$ and $C\cap C_1$ being
intersections with cells $C_0,C_1$. Since, by Definition~\ref{d:cellHelly}(1), the intersection $C_0\cap C_1$
has to be a nontrivial interval. But then the union $(C\cap C_0)\cup (C_0\cap C_1)\cup (C_1\cap C)$ is a graph being a union of three intervals along their endpoints, contradicting Definition~\ref{d:cellHelly}(3).  
Now, suppose there are cells $C,C_0,C_1,C_2$ such that $C\cap C_i\neq \emptyset$ and $C_i\cap C_{i+1 (\mr{mod}\, 3)}\neq \emptyset$. It follows that there is a common intersection $C\cap C_0\cap C_1\cap C_2$. This brings us to the previous case. 

For the T(4) condition let suppose there are cells $C_0,C_1,C_2$ with $C_i\cap C_{i+1 (\mr{mod}\, 3)}$ being a nontrivial interval containing a vertex $v$, for all $i$. Then $(C_0\cap C_1)\cup (C_1\cap C_2)\cup (C_2\cap C_0)$ cannot be contained in a single cell, contradicting Definition~\ref{d:cellHelly}(3).
\end{proof}

Definition~\ref{d:cellHelly} generalizes Definition~\ref{d:cellhelly} in the same way as the graphical small cancellation theory generalizes the classical small cancellation theory, another example to consider is $X$ being a C(4)-T(4) graphical small cancellation complex. Defining $\{X_i\}$ as the family of simplicial cones over relators, we obtain a generalized cell Helly complex (proved in \cite{Helly}).

Later we will see more examples of cell Helly complexes whose cells are zonotopes (Corollary~\ref{cor:zonotopes}) or Coxeter cells (Theorem~\ref{t:cHFC}).

%
%
%

For a pair $(X,\{X_i\}_{i\in I})$ as above, we define its \emph{thickening} $Y$ to be the following simplicial complex. The $0$-skeleton of $Y$ is $X^{(0)}$, and two vertices of $Y^{(0)}$ are adjacent if they are contained in the same generalized cell of $X$. Then $Y$ is defined to be the flag complex of $Y^{(1)}$.
We call $(X,\{X_i\}_{i\in I})$ \emph{locally bounded} if there does not exist an infinite strictly increasing 
sequence of vertex sets $A_1\subsetneq A_2 \subsetneq A_3 \subsetneq \cdots$ such that for every 
$j$ there exists $i_j$ with $A_j\subseteq X_{i_j}$.

\begin{theorem}
	\label{t:cH>H}
	The thickening of a locally bounded generalized cell Helly complex is clique Helly. 
	The thickening of a simply connected locally bounded generalized cell Helly complex is Helly.
\end{theorem}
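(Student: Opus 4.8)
The plan is to deduce both assertions from the combinatorial criteria already assembled, namely Lemma~\ref{lem:helly criterion1} (local clique Hellyness from the finite Helly property and the triple-intersection condition) and Theorem~\ref{thm:Helly} (the local-to-global passage from clique Helly plus simple connectivity of the clique complex to Helly). Observe first that the thickening $Y$ of $(X,\{X_i\}_{i\in I})$ is, by construction, precisely the graph produced in Lemma~\ref{lem:helly criterion1} applied to the family $\{X_i^{(0)}\}_{i\in I}$ of vertex sets of the generalized cells: two vertices are adjacent in $Y$ iff they lie in a common $X_i$, and $Y$ is the flag complex of that graph. Conditions (2) and (3) of Definition~\ref{d:cellHelly} are literally hypotheses (2) and (3) of Lemma~\ref{lem:helly criterion1} once one notes that $(X_{i_1}\cap X_{i_2})\cup(X_{i_2}\cap X_{i_3})\cup(X_{i_3}\cap X_{i_1})$ being contained in $X_{i_0}$ as subcomplexes forces the same containment on vertex sets. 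For hypothesis (1) of Lemma~\ref{lem:helly criterion1} (no infinite cliques in $Y$), one uses that each $X_i$ is a \emph{finite} full subcomplex: a clique in $Y$ is, by the just-proved claim inside the proof of Lemma~\ref{lem:helly criterion1}, contained in some $X_i$, hence finite. Thus Lemma~\ref{lem:helly criterion1} gives that $Y$ is finitely clique Helly, which is the first assertion (clique Helly being the same as finitely clique Helly when there are no infinite cliques).

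For the second assertion I would feed this into Theorem~\ref{thm:Helly}. We already have $Y$ finitely clique Helly and without infinite cliques, so it remains to show that the clique complex of $Y$ — which is $Y$ itself, since $Y$ is defined as a flag complex — is simply connected. This is where the hypothesis that $X$ is simply connected, together with Definition~\ref{d:cellHelly}(1) (all nonempty intersections of generalized cells are connected and simply connected, in particular each $X_i$ is connected and simply connected), must be used. The natural strategy is to build a map relating $X$ and $Y$: there is an obvious inclusion $X^{(0)} = Y^{(0)}$, and since each cell of $X$ lies inside some $X_i$ while each $X_i$ spans a simplex-worth of vertices that is filled in $Y$, one gets a map $X \to Y$ (cellular up to subdivision) that is a bijection on $0$-skeleta and kills nothing on $\pi_0$. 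One then argues it induces a surjection on $\pi_1$ — any edge loop in $Y$ is homotopic to one in $X^{(1)} \subset X$ after replacing each $Y$-edge by a path inside the corresponding $X_i$ (using that $X_i$ is connected) — and that it is also injective on $\pi_1$, i.e. any null-homotopy in $Y$ can be pushed into $X$. Since $\pi_1(X) = 1$, this yields $\pi_1(Y) = 1$, and then Theorem~\ref{thm:Helly} finishes: $Y$ is Helly.

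The main obstacle is the $\pi_1$-comparison between $X$ and $Y$, and specifically the injectivity direction: showing that a disc in $Y$ bounding an edge loop $\gamma \subset X^{(1)}$ can be replaced by a disc in $X$. The standard approach is to induct on the number of $2$-simplices of $Y$ in the disc and, for each such triangle with vertices lying in a common generalized cell $X_i$, to use the simple connectivity of $X_i$ (Definition~\ref{d:cellHelly}(1)) to homotope that triangle across $X_i$ inside $X$; the simple connectivity of the \emph{intersections} $X_{i_1}\cap\cdots\cap X_{i_k}$ is what guarantees these local replacements are compatible along shared faces, so that the homotopies glue. One must be a little careful that the triangles of $Y$ need not be $2$-cells of $X$, so the homotopy lives in $X$ rather than in its $2$-skeleton; but since we only need simple connectivity of $Y$ and not of $X^{(2)}$, working in all of $X$ is harmless. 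I expect the bookkeeping to be routine once the induction scheme is set up, with Definition~\ref{d:cellHelly}(1) doing all the real work. (Alternatively, one can invoke a nerve-type theorem: $Y$ is a nerve-like complex of the cover $\{X_i\}$ with all finite intersections empty or contractible, so $Y$ is homotopy equivalent to $X$; but giving a self-contained $\pi_1$ argument as above keeps the paper independent of such machinery.)
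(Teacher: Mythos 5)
Your treatment of the first statement coincides with the paper's: both reduce it to Lemma~\ref{lem:helly criterion1}. For the second statement you take a genuinely different route. The paper settles it in two lines by applying Bj\"orner's Nerve Theorem \cite{bjorner2003nerves} simultaneously to the covering of $X$ by the generalized cells and to the covering of $Y$ by the simplices they span: the two covers have the same nerve, and Definition~\ref{d:cellHelly}(1) (nonempty intersections connected and simply connected) is exactly the hypothesis needed for the comparison of $\pi_0$ and $\pi_1$, so $\pi_1(Y)\cong\pi_1(X)=1$ and Theorem~\ref{thm:Helly} finishes. You instead build a map $X\to Y$ (identity on vertices, each cell sent into the simplex spanned by its vertex set) and compare fundamental groups by hand; this is legitimate and keeps the proof self-contained, and it is in fact easier than you make it: since $\pi_1(X)=1$, surjectivity of $\pi_1(X)\to\pi_1(Y)$ alone gives $\pi_1(Y)=1$ --- replace each edge of a $Y$-loop by a path inside a common generalized cell (connectedness from Definition~\ref{d:cellHelly}(1)), observe that each replacement is a homotopy inside a single simplex of $Y$, and transport a nullhomotopy in $X$ through the map $X\to Y$. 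The injectivity step you single out as the main obstacle is therefore not needed at all; avoiding exactly this bookkeeping is what the nerve-theorem route buys, at the cost of quoting an external result. Two caveats. First, your parenthetical nerve argument asks for contractible intersections, but Definition~\ref{d:cellHelly}(1) only provides simply connected ones, so one must invoke the homotopy-group version of the nerve theorem (as the paper does) rather than the Borsuk homotopy-equivalence version; the contractibility hypothesis appears only later, in Proposition~\ref{p:contract}. Second, your verification of hypothesis (1) of Lemma~\ref{lem:helly criterion1} is circular as written: the claim inside that lemma's proof is an induction that presupposes cliques are finite, so it shows every \emph{finite} clique lies in a single generalized cell, but ruling out infinite cliques requires an additional assumption (e.g.\ local finiteness or bounded cell size) --- a point the paper itself leaves implicit and which holds in its applications.
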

\begin{proof}
	Let $(X,\{X_i\}_{i\in I})$ be a locally bounded generalized cell Helly complex and let $Y$ be its thickening. It follows from the proof of Lemma~\ref{lem:helly criterion1} that the vertex set of each finite clique of $Y$ is contained in some $X_{i_j}$ (this does not use assumption (1) of Lemma~\ref{lem:helly criterion1}). This together with local boundedness imply that there are no infinite cliques in $Y$.
	The first statement follows from Lemma~\ref{lem:helly criterion1}.
	By the Nerve Theorem \cite[Theorem 6]{bjorner2003nerves} applied to the covering 
	of $X$ by generalized cells $\{X_i\}$, and the covering of $Y$ by simplices corresponding to generalized cells
	we get that $X$ and $Y$ are have the same connectivity and isomorphic fundamental groups.
	If $X$ is simply connected then so is $Y$ and hence, by Theorem~\ref{thm:Helly}, the thickening is Helly. 
\end{proof}

%
%

We say that a group $G$ \emph{acts} on a generalized cell Helly complex $(X,\{X_i\}_{i\in I})$ if it acts by automorphisms
on $X$ and preserves the family $\{X_i\}_{i\in I}$. For such an action clearly $G$ acts by automorphisms
on the thickening of $(X,\{X_i\}_{i\in I})$. Moreover, we have the following.

\begin{prop}
	\label{p:Helly_action}
	If a group $G$ acts geometrically on a simply connected locally bounded generalized cell Helly complex $(X,\{X_i\}_{i\in I})$ then $G$ acts geometrically on the Helly graph being the $1$-skeleton $Y^{(1)}$ of the thickening $Y$. In particular, $G$ is Helly.
\end{prop}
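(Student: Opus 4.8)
The plan is to deduce Proposition~\ref{p:Helly_action} directly from Theorem~\ref{t:cH>H} together with standard facts about geometric actions. First I would record that if $G$ acts geometrically on $(X,\{X_i\}_{i\in I})$ in the sense defined above --- i.e.\ by automorphisms of $X$ preserving the family $\{X_i\}$, properly and cocompactly --- then $G$ acts by simplicial automorphisms on the thickening $Y$, since the edges of $Y$ are defined purely in terms of which pairs of vertices lie in a common generalized cell, a condition visibly preserved by $G$. Because $X$ is simply connected, Theorem~\ref{t:cH>H} tells us $Y$ is Helly; in particular its $1$-skeleton $Y^{(1)}$ is a Helly graph (this is just the definition, as $Y$ is the flag complex of $Y^{(1)}$). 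So it remains to check that the induced action of $G$ on $Y^{(1)}$ is geometric.

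Next I would verify properness and cocompactness of the action on $Y^{(1)}$. The vertex set of $Y$ equals $X^{(0)}$, so $G$ acts on the same set of vertices as before; cocompactness of the $G$-action on $X$ gives finitely many $G$-orbits of cells, hence finitely many $G$-orbits of vertices of $X$, hence finitely many $G$-orbits of vertices of $Y$, and since $Y$ is a flag complex determined by its vertices and edges this yields cocompactness on $Y$. For properness, the key point is that the identity map $X^{(0)}\to Y^{(0)}$ is a quasi-isometry: two vertices joined by an edge of $Y$ lie in a common generalized cell $X_i$, and since each $X_i$ is a \emph{finite} full subcomplex and there are finitely many $G$-orbits of them, the $X$-diameters of the generalized cells are uniformly bounded, say by $D$; conversely an edge of $X$ either already is an edge of $Y$ or its endpoints are still within distance $1$ in $X$ hence within some bounded distance in $Y$ once one notes every edge of $X$ lies in some generalized cell (or, if not, one works with the coarser of the two metrics --- the relevant direction is that $d_Y \le d_X \le D\cdot d_Y$ up to additive constants). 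Since $G$ acts properly on $X^{(0)}$ with the $X$-metric and this metric is $G$-equivariantly quasi-isometric to the $Y$-metric, $G$ acts properly on $Y^{(1)}$. The final sentence ``$G$ is Helly'' is then immediate from the definition of a Helly group recalled in the introduction.

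The main obstacle, such as it is, is bookkeeping rather than mathematics: one must be slightly careful that the family $\{X_i\}$ consists of \emph{finite} subcomplexes (guaranteed in Definition~\ref{d:cellHelly}) so that the quasi-isometry constant $D$ exists, and that cocompactness of the $G$-action on $X$ really does pass to the thickening --- here it is cleanest to invoke that a geometric action on $X$ gives finitely many orbits of generalized cells (each finite), so the union of a set of orbit representatives is a finite subcomplex whose $G$-translates cover $X^{(0)} = Y^{(0)}$, and a finite subcomplex of $Y$ together with its translates then covers $Y$. No genuinely hard step arises; the content of the proposition is entirely carried by Theorem~\ref{t:cH>H}.

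\begin{proof}
Let $Y$ denote the $1$-skeleton of the thickening of $(X,\{X_i\}_{i\in I})$; by abuse of notation we also write $Y$ for the thickening itself, as the latter is the flag complex of the former. Since $G$ acts on $(X,\{X_i\}_{i\in I})$, it preserves the relation ``lie in a common generalized cell'' on $X^{(0)}=Y^{(0)}$, hence acts by graph automorphisms on $Y^{(1)}$ and, passing to flag complexes, by simplicial automorphisms on $Y$. As $X$ is simply connected, Theorem~\ref{t:cH>H} shows $Y$ is Helly; thus $Y^{(1)}$ is a Helly graph.

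It remains to see that the action of $G$ on $Y^{(1)}$ is geometric. By Definition~\ref{d:cellHelly} each $X_i$ is a finite full subcomplex of $X$, and since the $G$-action on $X$ is cocompact there are finitely many $G$-orbits of generalized cells; hence there is a uniform bound $D$ on the $X^{(1)}$-diameters of the members of $\{X_i\}$. Therefore, if $x,y\in X^{(0)}$ are joined by an edge of $Y^{(1)}$, they lie in a common $X_i$ and so $d_{X^{(1)}}(x,y)\le D$. Conversely, if $x,y\in X^{(0)}$ are joined by an edge of $X$, then either that edge is itself contained in some generalized cell (so $x,y$ are adjacent in $Y^{(1)}$) or else $d_{Y^{(1)}}(x,y)$ is still finite; in either case, combining with the first estimate, the identity map on $X^{(0)}$ is a $G$-equivariant quasi-isometry between $(X^{(0)},d_{X^{(1)}})$ and $(Y^{(0)},d_{Y^{(1)}})$. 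Since $G$ acts properly and cocompactly on $(X^{(0)},d_{X^{(1)}})$, the same holds on $(Y^{(0)},d_{Y^{(1)}})$, i.e.\ $G$ acts geometrically on the Helly graph $Y^{(1)}$. In particular $G$ is Helly.
\end{proof}
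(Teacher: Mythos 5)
The paper gives no proof of Proposition~\ref{p:Helly_action} at all: it is stated as an immediate consequence of the preceding discussion and of Theorem~\ref{t:cH>H}, so there is no ``paper route'' to compare against. Your outline is certainly the intended argument: $G$ acts simplicially on the thickening because adjacency is defined by co-membership in a generalized cell, Theorem~\ref{t:cH>H} gives Hellyness, and one transfers properness and cocompactness from $X$ to the $1$-skeleton $Y^{(1)}$ of the thickening via a $G$-equivariant quasi-isometry of vertex sets.

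There is, however, one step that does not follow as you state it: ``since the $G$-action on $X$ is cocompact there are finitely many $G$-orbits of generalized cells.'' Cocompactness on $X$ controls orbits of cells of $X$, not of the auxiliary family $\{X_i\}_{i\in I}$, which in Definition~\ref{d:cellHelly} is an arbitrary $G$-invariant family of finite full subcomplexes. For instance, take $X$ the standard triangulation of $\mathbb{R}$, $G=\mathbb{Z}$ acting by translations, and $\{X_i\}$ the family of all segments $[n,n+k]$, $k\ge 1$: this satisfies Definition~\ref{d:cellHelly}, $G$ acts geometrically on $X$ preserving the family, yet there are infinitely many orbits of generalized cells, no uniform diameter bound $D$, the thickening is a complete graph on $X^{(0)}$, and the action on it is neither proper nor cocompact. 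So your intermediate claim is false in this generality, and the quasi-isometry constant $D$ on which your whole transfer argument rests need not exist; the finiteness of the number of orbits of generalized cells (equivalently, a uniform diameter bound) must be taken as part of the hypothesis, or one must restrict to the cell Helly case of Definition~\ref{d:cellhelly}, where $\{X_i\}$ is the set of closed cells and cocompactness on $X$ does give finitely many orbits --- which is the only case used later (Corollary~\ref{cor:zonotopes}, Theorem~\ref{t:cHFC}). A smaller point: in the reverse inequality your phrase ``or else $d_{Y^{(1)}}(x,y)$ is still finite'' is not a quasi-isometry estimate; what you need is that every edge (indeed every cell) of $X$ lies in some generalized cell, which is implicit in the covering of $X$ by $\{X_i\}$ used in the proof of Theorem~\ref{t:cH>H}, and again automatic in the cell Helly case. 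With these hypotheses made explicit, your argument goes through.
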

\begin{proof}
	We first show that each vertex $v$ in $Y^{(1)}$ is adjacent to finitely many other vertices.  Suppose $v$ is adjacent to infinitely many different vertices $\{v_i\}_{i=1}^\infty$. Take $X_i$ which contains both $v$ and $v_i$. As $X$ is uniformly locally finite, we can assume $\lim_{i\to\infty}d_{X^{(1)}}(v,v_i)=\infty$ where $d_{X^{(1)}}$ denotes the path metric on $X^{(1)}$. Let $P_i$ be a shortest edge path in $X_i$ connecting $v$ and $v_i$. Then $P_i$ subconverges to an infinite path as $i\to\infty$, which contradicts the local boundedness assumption.
	
	Since the $G$-action on $X$ is cocompact, the quotient $Y^{(0)}/G$ of the vertex set is finite.
	Since $(X,\{X_i\})$ is locally bounded there are no infinite cliques (see the proof of Theorem~\ref{t:cH>H}) and hence the quotient 
	$Y^{(1)}/G$ is finite.
	
	To prove properness it remains to show that, for the $G$-action on $Y^{(1)}$, stabilizers of cliques
	are finite. Such stabilizers correspond to stabilizers of finite vertex sets for the $G$-action on $X$, which are finite by the properness of the $G$-action on $X$.
\end{proof}

We close the section with a simple but useful observation, that will allow us to reprove contractibility of the universal cover of the Salvetti complex of an FC-type Artin group in Section~\ref{s:ArtinH}.

\begin{prop}
	\label{p:contract}
	Let $(X,\{X_i\}_{i\in I})$ be a simply connected locally bounded generalized cell Helly complex such that the intersection
	of any collection of generalized cells is either empty or contractible. Then $X$ is contractible.
\end{prop}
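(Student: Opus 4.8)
The plan is to use the Nerve Theorem again, exactly in the spirit of the proof of Theorem~\ref{t:cH>H}, but now exploiting the stronger hypothesis that all nonempty intersections of generalized cells are contractible rather than merely connected and simply connected. Recall that Theorem~\ref{t:cH>H} already gives us that the thickening $Y$ of $(X,\{X_i\})$ is simply connected, hence by Theorem~\ref{thm:Helly} it is Helly. The point of the present proposition is to upgrade ``simply connected'' to ``contractible''.

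First I would observe that the family $\{X_i\}_{i\in I}$ is a cover of $X$ (every cell, in particular every vertex-cell or generating cell, is among the $X_i$, so the union is all of $X$; more carefully, one should check that $\{X_i\}$ indeed covers $X$, which is part of the setup of a generalized cell Helly complex since the generalized cells are full subcomplexes whose union is $X$). By hypothesis, every nonempty finite intersection $X_{i_1}\cap\cdots\cap X_{i_k}$ is contractible, so in particular this is a \emph{good cover} in the sense required by the Nerve Theorem. The Nerve Theorem (\cite[Theorem 6]{bjorner2003nerves}, or the classical version) then says $X$ is homotopy equivalent to the nerve $N(\{X_i\})$ of the cover. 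On the other hand, the nerve of $\{X_i\}$ is precisely the simplicial complex whose simplices are the finite subsets $\{i_1,\dots,i_k\}$ with $X_{i_1}\cap\cdots\cap X_{i_k}\neq\emptyset$; by the finite Helly property (Definition~\ref{d:cellHelly}(2)) this is exactly the clique complex of the graph on $I$ in which $i\sim j$ iff $X_i\cap X_j\neq\emptyset$ — i.e., the nerve is a flag complex.

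Next I would identify this nerve, up to homotopy, with the thickening $Y$ (or rather with the flag complex $Y$ as defined before Theorem~\ref{t:cH>H}). This is the same bookkeeping already invoked in the proof of Theorem~\ref{t:cH>H}: one applies the Nerve Theorem simultaneously to the cover of $X$ by the $X_i$ and to the cover of $Y$ by the closed simplices spanned by the vertex sets of the $X_i$, noting that these two covers have isomorphic nerves (matching $X_i\leftrightarrow$ the simplex on $X_i^{(0)}$, and matching intersection patterns via Definition~\ref{d:cellHelly}(2) together with the fact that $X_i$ is a full subcomplex). Hence $X\simeq N(\{X_i\})\simeq Y$. Since $X$ is assumed simply connected, so is $Y$; and $Y$ is Helly by Theorem~\ref{t:cH>H}. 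A Helly graph has a contractible clique complex — this is standard (Helly graphs are ``dismantlable''/clique-contractible; see e.g.\ the discussion around \cite{weaklymodular}), so the clique complex of $Y$, which is $Y$ itself, is contractible. Therefore $X$ is contractible.

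The main obstacle I anticipate is purely a matter of being careful about \emph{which} version of the Nerve Theorem one cites and checking its hypotheses: the cleanest statement requires the cover to be closed and the complex to be paracompact (automatic for CW/combinatorial complexes), and requires all nonempty finite intersections to be contractible — which is exactly what is assumed, so this goes through. A secondary subtlety is justifying that the clique complex of a (possibly infinite-dimensional, but clique-finite) Helly graph is contractible; if one prefers to avoid quoting that, an alternative is to argue directly that $Y \simeq X$ and instead of invoking Hellyness of $Y$ run the nerve/intersection-pattern argument so that the contractibility of all intersections feeds directly into showing $N(\{X_i\})$ is contractible — e.g.\ by the local-to-global/collapsibility arguments underlying Theorem~\ref{thm:Helly}. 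Either way the heart of the proof is the Nerve Theorem applied to the good cover $\{X_i\}$, and everything else is the same identification of nerves used already in Theorem~\ref{t:cH>H}.
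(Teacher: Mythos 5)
Your proof is correct and follows essentially the same route as the paper: the paper likewise applies the (Borsuk) Nerve Theorem to the cover of $X$ by generalized cells and to the cover of the thickening $Y$ by the corresponding simplices to get $X\simeq Y$, and then concludes since $Y$ is Helly by Theorem~\ref{t:cH>H} and hence contractible. Your extra remarks (verifying that $\{X_i\}$ covers $X$ and citing dismantlability/contractibility of Helly clique complexes) only make explicit what the paper leaves implicit.
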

\begin{proof}
	By Borsuk's Nerve Theorem \cite{Borsuk1948} applied to the covering 
	of $X$ by generalized cells $\{X_i\}$, and the covering of $Y$ by simplices corresponding to generalized cells, we have that $X$ is homotopically equivalent to its thickening. The latter is contractible as a Helly complex by Theorem~\ref{t:cH>H}.	
\end{proof}

\section{Weak Garside groups act on Helly graphs}
\label{s:Garside}
\subsection{Basic properties of Garside categories}
\label{subsec:Garside basic}
We follow the treatment of Garside categories in \cite{bessis2006garside,Krammer}. 

Let $\C$ be a small category. One may think of $\C$ as of an oriented graph, whose vertices are objects in $\C$ and oriented edges are morphisms of $\C$. Arrows in $\C$ compose like paths: $x\stackrel{f}{\to} y\stackrel{g}{\to} z$ is composed into $x\stackrel{fg}{\to} z$. For objects $x,y\in\C$, let $\C_{x\to}$ denote the collection of morphisms whose source object is $x$. Similarly we define $\C_{\to y}$ and $\C_{x\to y}$. 

For two morphisms $f$ and $g$, we define $f\preccurlyeq g$ if there exists a morphism $h$ such that $g=fh$. Define $g\succcurlyeq f$ if there exists a morphism $h$ such that $g=hf$. Then $(\C_{x\to},\preccurlyeq)$ and $(\C_{\to y},\succcurlyeq)$ are posets. A nontrivial morphism $f$ which cannot be factorized into two nontrivial factors is an \emph{atom}.
 
The category $\C$ is \emph{cancellative} if, whenever a relation $afb=agb$ holds between composed morphisms, it implies $f=g$. $\C$ is \emph{homogeneous} if there exists a length function $l$ from the set
 of $\C$-morphisms to $\mathbb Z_{\ge 0}$ such that $l(fg) = l(f) + l(g)$ and $(l(f) = 0)\Leftrightarrow$ ($f$ is a unit).

We consider the triple $(\C,\C\stackrel{\phi}{\to}\C,1_\C \stackrel{\Delta}{\Rightarrow}\phi)$ where $\phi$ is an automorphism of $\C$ and $\Delta$ is a natural transformation from the identity function to $\phi$. For an object $x\in \C$, $\Delta$ gives morphisms $x\stackrel{\Delta(x)}{\longrightarrow} \phi(x)$ and $\phi^{-1}(x)\stackrel{\Delta(\phi^{-1}(x))}{\longrightarrow} x$. We denote the first morphism by $\Delta_x$ and the second morphism by $\Delta^x$. A morphism $x\stackrel{f}{\to} y$ is \emph{simple} if there exists a morphism $y\stackrel{f^\ast}{\to} \phi(x)$ such that $f f^\ast=\Delta_x$. When $\C$ is cancellative, such $f^\ast$ is unique.

\begin{definition}
	\label{def:Garside}
A \emph{homogeneous categorical Garside structure} is a triple $(\C,\C\stackrel{\phi}{\to}\C,1_\C \stackrel{\Delta}{\Rightarrow}\phi)$ such that:
\begin{enumerate}
	\item $\phi$ is an automorphism of $\C$ and $\Delta$ is a natural transformation from the identity function to $\phi$;
	\item $\C$ is homogeneous and cancellative;
	\item all atoms of $\C$ are simple;
	\item for any object $x$, $\C_{x\to}$ and $\C_{\to x}$ are lattices.
\end{enumerate}
It has \emph{finite type} if the collection of simple morphisms of $\C$ is finite. 
\end{definition}

\begin{definition}
A \emph{Garside category} is a category	$\C$ that can be equipped with $\phi$ and $\Delta$ to obtain a homogeneous categorical Garside structure. A \emph{Garside groupoid} is the enveloping groupoid of a Garside category. See \cite[Section 3.1]{dehornoy2015foundations} for a detailed definition of enveloping groupoid. Informally speaking, it is a groupoid obtained by adding formal inverses to all morphisms in a Garside category.

Let $x$ be an object in a groupoid $\G$. The \emph{isotropy} group $\G_x$ at $x$ is the group of morphisms from $x$ to itself. A \emph{weak Garside group} is a group isomorphic to the isotropy group of an object in a Garside groupoid. A weak Garside group has \emph{finite type} if its associated Garside category has finite type.

A \emph{Garside monoid} is a Garside category with a single object and a \emph{Garside group} is a Garside groupoid with a single object.
\end{definition}

It follows from \cite[Proposition~3.11]{dehornoy2015foundations} that if $\G$ is a Garside groupoid, then the natural functor $\C\to \G$ is injective. Moreover, by \cite[lemma~3.13]{dehornoy2015foundations}, the triple $(\C,\C\stackrel{\phi}{\to}\C,1_\C \stackrel{\Delta}{\Rightarrow}\phi)$ naturally extends to $(\G,\G\stackrel{\phi}{\to}\G,1_\G \stackrel{\Delta}{\Rightarrow}\phi)$ (we denote the extensions of $\phi$ and $\Delta$ by $\phi$ and $\Delta$ as well).

Define $\G_{x\to}$ and $\G_{\to x}$ in a similar way to $\C_{x\to}$ and $\C_{\to x}$. For two morphisms $x$ and $y$ of $\G$, define $x\preccurlyeq y$ if there exists a morphism $z$ of $\C$ such that $y=xz$. Define $y\succcurlyeq x$ analogously. The following Lemma~\ref{lem:lattice hom} is well-known for inclusion from a Garside monoid to its associated Garside group. The more general case concerning Garside category can be proved similarly.

\begin{lemma}
	\label{lem:lattice hom}
For each object $x$ of $\G$, $\G_{x\to}$ and $\G_{\to x}$ are lattices. Moreover, the natural inclusions $\C_{x\to}\to \G_{x\to}$ and $\C_{\to x}\to \G_{\to x}$ are homomorphisms between lattices.
\end{lemma}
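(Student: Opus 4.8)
The plan is to prove Lemma~\ref{lem:lattice hom} by a direct ``common denominator'' argument, mimicking the classical embedding of a Garside monoid into its group of fractions. Fix an object $x$ of $\G$. Every morphism in $\G_{x\to}$ can be written as $fg^{-1}$ with $f\in\C_{x\to}$ and $g\in\C_{x'\to}$ for a suitable object $x'$ (apply powers of $\Delta$ and use that simples generate, together with the extension of $(\phi,\Delta)$ to $\G$ from \cite[Lemma~3.13]{dehornoy2015foundations}); the key normalization is that, after multiplying by a high enough power of $\Delta$, any two given morphisms can be brought to the form $\alpha\Delta^{-N}_{\bullet}$ and $\beta\Delta^{-N}_{\bullet}$ with $\alpha,\beta\in\C$ ending at the same object. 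I would first record this normal form, then note that $fg^{-1}\preccurlyeq f'g'^{-1}$ is detected purely inside $\C$ once both sides are put over a common right multiple of $\Delta$-powers, because $\preccurlyeq$ on $\G$ is defined using only $\C$-morphisms as multipliers and $\C\to\G$ is injective by \cite[Proposition~3.11]{dehornoy2015foundations}.

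Next I would establish that $(\G_{x\to},\preccurlyeq)$ is a lattice. Given $u=a\Delta^{-N}$ and $v=b\Delta^{-N}$ with $a,b\in\C_{x\to}$ sharing a common target, Definition~\ref{def:Garside}(4) says $\C_{x\to}$ is a lattice, so $a\wedge b$ and $a\vee b$ exist in $\C_{x\to}$; I claim $u\wedge v=(a\wedge b)\Delta^{-N}$ and $u\vee v=(a\vee b)\Delta^{-N}$. For the meet, $(a\wedge b)\Delta^{-N}\preccurlyeq u$ and $\preccurlyeq v$ is immediate, and if $w\preccurlyeq u,v$ then, putting $w$ over the same $\Delta^{-N}$ (enlarging $N$ if necessary, which only rescales everything consistently), $w=c\Delta^{-N}$ with $c\in\C$ and $c\preccurlyeq a$, $c\preccurlyeq b$ in $\C$, hence $c\preccurlyeq a\wedge b$ by the lattice property of $\C_{x\to}$; cancellativity (Definition~\ref{def:Garside}(2)) guarantees the $\Delta$-power manipulations are reversible. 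The join is symmetric, using that $a\vee b$ is a common right multiple in $\C$ and that multiplying by $\Delta^{-N}$ is an order isomorphism of the relevant sub-posets. The same argument applies verbatim to $(\G_{\to x},\succcurlyeq)$ using the left versions.

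Finally, to see that $\C_{x\to}\hookrightarrow\G_{x\to}$ is a lattice homomorphism, observe that for $a,b\in\C_{x\to}$ the computation above with $N=0$ gives $a\wedge_{\G}b=a\wedge_{\C}b$ and $a\vee_{\G}b=a\vee_{\C}b$ directly --- the meet and join computed in $\G$ land back in $\C$ and agree with the meet and join computed in $\C$. This is exactly the assertion that the inclusion preserves $\wedge$ and $\vee$. The main obstacle, and the only place needing genuine care, is the normalization step: showing that any finite set of $\G$-morphisms out of (resp.\ into) $x$ can be simultaneously written over a single power $\Delta^{-N}$ with numerators in $\C$ ending (resp.\ starting) at a common object, so that all lattice operations can be transported to the honest lattices $\C_{x\to}$, $\C_{\to x}$ of the Garside category. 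This is where finiteness/homogeneity of the length function and the naturality of $\Delta$ (the transformation $1_\C\Rightarrow\phi$, extended to $\G$) are used, and it is precisely the categorical analogue of the classical fact for Garside monoids that the author invokes; I would cite the monoid case and indicate that homogeneity plus cancellativity make the same proof go through for categories.
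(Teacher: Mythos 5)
The paper offers no written proof here: it simply remarks that the statement is classical for the inclusion of a Garside monoid into its Garside group and that the categorical case ``can be proved similarly,'' which is exactly the common-denominator route you take, so your overall strategy is the intended one. However, the normalization step --- the very step you single out as the only place needing genuine care --- is misstated in the categorical setting, and as written it fails: if $u,v\in\G_{x\to}$ have different target objects, you cannot write them as $\alpha\delta^{-1}$ and $\beta\delta^{-1}$ with one and the same $\Delta$-power $\delta$ (equivalently, with numerators $\alpha,\beta$ ending at a common object), because the target of the numerator is forced to be $\phi^N$ of the target of the original morphism, hence is determined by that morphism. What is true, and all the argument needs, is weaker: fix a common exponent $N$ and multiply each $w\in\G_{x\to}$ on the right by the $\Delta$-power $\delta^{(N)}_{t(w)}$ based at its \emph{own} target. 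Naturality of $\Delta$ extended to $\G$ gives $\bigl(u\,\delta^{(N)}_{t(u)}\bigr)^{-1}\bigl(v\,\delta^{(N)}_{t(v)}\bigr)=\phi^{N}(u^{-1}v)$, so this translation is a poset automorphism of $(\G_{x\to},\preccurlyeq)$, and for $N$ large it carries any finite subset into $\C_{x\to}$ (every $\C$-morphism, being a product of atoms that are simple, left-divides a $\Delta$-power). With this correction the common target is never needed, since $\C_{x\to}$ is a lattice irrespective of targets.

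A second point you pass over too quickly is the meet, and with it the claim $a\wedge_{\G}b=a\wedge_{\C}b$ for $a,b\in\C_{x\to}$. For joins the reduction is immediate, because any $\preccurlyeq$-upper bound in $\G$ of a $\C$-morphism is automatically in $\C$ (if $a\preccurlyeq w$ then $w=as$ with $s\in\C$); but a $\G$-lower bound need not lie in $\C$, so your ``enlarge $N$'' step silently requires that right translation by $\Delta$-powers commutes with meets in $\C_{x\to}$, i.e.\ $\bigl(a\,\delta^{(k)}_{t(a)}\bigr)\wedge\bigl(b\,\delta^{(k)}_{t(b)}\bigr)=(a\wedge b)\,\delta^{(k)}_{t(a\wedge b)}$. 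This is true, but it is not mere ``reversibility by cancellativity'': one proves it using naturality of $\Delta$ together with cancellativity and homogeneity, or one sidesteps it by handling meets through left fractions (cancelling on the left a $\Delta$-power with target $x$), equivalently via the order-reversing bijection $h\mapsto h^{-1}$ onto $(\G_{\to x},\succcurlyeq)$, whose poset-level proof is independent of the present lemma. Once these two points are repaired, your proof is exactly the standard argument the paper alludes to.
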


For morphisms $f_1,f_2\in \G_{x\to}$ (resp.\ $\G_{\to x}$), we use $f_1\wedge_p f_2$ (resp.\ $f_1\wedge_s f_2$) to denote the greatest lower bound of $f_1$ and $f_2$ with respect to $\preccurlyeq$ (resp.\ $\succcurlyeq$), where $p$ (resp.\ $s$) stands for prefix (resp.\ suffix) order. Similarly, we define $f_1\vee_p f_2$ and $f_1\vee_s f_2$.

\begin{lemma}
	\label{lem:basic properties}
Let $\G$ be a Garside groupoid and let $f:x\to y$ be a morphism of $\G$. Then
\begin{enumerate}
	\item the map $(\G_{y\to},\preccurlyeq)\to (\G_{x\to},\preccurlyeq)$ mapping $h\in \G_y$ to $fh$ is a lattice monomorphism;
	\item the bijection $(\G_{x\to},\preccurlyeq)\to (\G_{\to x},\succcurlyeq)$ mapping $h\in \G_x$ to $h^{-1}$ is order-revering, i.e.\ $h_1\preccurlyeq h_2$ if and only if $h^{-1}_1\succcurlyeq h^{-1}_2$;
	\item choose elements $\{a_i\}_{i=1}^n\subset \G_{\to x}$, then $a^{-1}_1\vee_p a^{-1}_2\vee_p\cdots\vee_p a^{-1}_n=(a_1\wedge_s a_2\wedge_s\cdots\wedge_s a_n)^{-1}$ and $a^{-1}_1\wedge_p a^{-1}_2\wedge_p\cdots\wedge_p a^{-1}_n=(a_1\vee_s a_2\vee_s\cdots\vee_s a_n)^{-1}$.
\end{enumerate}
\end{lemma}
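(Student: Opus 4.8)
The plan is to verify the three assertions by direct computation inside the groupoid $\G$, deducing (3) from (2). For (1), note that since $\G$ is a groupoid the morphism $f\colon x\to y$ is invertible, so left-composition $h\mapsto fh$ is a bijection $\G_{y\to}\to\G_{x\to}$ with inverse $h\mapsto f^{-1}h$. I would check it is an order isomorphism for $\preccurlyeq$: if $h_2=h_1z$ with $z$ a morphism of $\C$, then $fh_2=(fh_1)z$; conversely $fh_2=(fh_1)z$ forces $h_2=h_1z$ upon cancelling the invertible morphism $f$, so the same $z$ witnesses $h_1\preccurlyeq h_2$. Since $\G_{y\to}$ and $\G_{x\to}$ are lattices by Lemma~\ref{lem:lattice hom} and a poset isomorphism automatically preserves all meets and joins, the map is a lattice isomorphism, in particular a monomorphism.

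For (2), one has $h\in\G_{x\to}\Rightarrow h^{-1}\in\G_{\to x}$, and $h\mapsto h^{-1}$ is an involutive bijection between these sets; the content is the chain of equivalences
\[
h_1\preccurlyeq h_2\ \Longleftrightarrow\ h_2=h_1z\ \Longleftrightarrow\ h_1=h_2z^{-1}\ \Longleftrightarrow\ h_1^{-1}=z\,h_2^{-1}\ \Longleftrightarrow\ h_1^{-1}\succcurlyeq h_2^{-1},
\]
in which the \emph{same} morphism $z$ of $\C$ witnesses divisibility at both ends. Hence $h\mapsto h^{-1}$ reverses order.

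Part (3) is then a formal consequence of (2): an order-reversing bijection between posets carries least upper bounds to greatest lower bounds and vice versa. By (2) inversion identifies $(\G_{x\to},\preccurlyeq)$ with $(\G_{\to x},\succcurlyeq)$ reversing order, so it sends the $\succcurlyeq$-meet $a_1\wedge_s\cdots\wedge_s a_n$ to the $\preccurlyeq$-join $a_1^{-1}\vee_p\cdots\vee_p a_n^{-1}$ and likewise interchanges $\vee_s$ with $\wedge_p$; a short induction on $n$ reduces both identities to the two-element case. Existence of the indicated joins and meets is guaranteed by Lemma~\ref{lem:lattice hom}, applied where needed to the opposite Garside structure so that the ambient posets are lattices for both orders.

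I do not expect a substantive obstacle here: all three statements follow from the groupoid axioms together with Lemma~\ref{lem:lattice hom}, and this lemma is used only as a bookkeeping device for the arguments in Section~\ref{s:Garside}. The single point deserving care is in (2) --- confirming that the \emph{positive} morphism $z$ witnessing a divisibility relation is transported under inversion to another positive morphism (a morphism of $\C$), rather than to a mere morphism of $\G$ --- together with the routine matching of sources and targets of morphisms throughout.
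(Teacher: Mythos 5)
Your proposal is correct and follows essentially the same route as the paper: the paper disposes of (1) by definition (your observation that left multiplication by the invertible $f$ is an order isomorphism, hence a lattice isomorphism, just makes this explicit), proves (2) by exactly the computation $h_2=h_1z\Leftrightarrow h_1^{-1}=zh_2^{-1}$ with the same positive witness $z$, and deduces (3) formally from (2). No gaps beyond the level of detail the paper itself leaves implicit.
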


\begin{proof}
Assertion (1) follows from the definition. To see (2), note that if $h_1\preccurlyeq h_2$, then there exists a morphism $k$ of $\C$ such that $h_2=h_1 k$. Thus $h^{-1}_1=k h^{-1}_2$. Hence $h^{-1}_1\succcurlyeq h^{-1}_2$. The other direction is similar. (3) follows from (2).
\end{proof}

\begin{lemma}
	\label{lem:order}
	Let $\G$ be a Garside groupoid. Let $\{a_i\}_{i=1}^n$ and $c$ be morphisms in $\G$. Let $\{b_i\}_{i=1}^n$ be morphisms in $\C$.
	\begin{enumerate}
		\item Suppose $a_ib_i=c$ for $1\le i\le n$. Let $a=a_1\vee_p a_2\vee_p\cdots\vee_p a_n$ and $b=b_1\wedge_s b_2\wedge_s\cdots\wedge_s b_n$. Then $ab=c$.
		\item Suppose $a_ib_i=c$ for $1\le i\le n$. Let $a=a_1\wedge_p a_2\wedge_p\cdots\wedge_p a_n$ and $b=b_1\vee_s b_2\vee_s\cdots\vee_s b_n$. Then $ab=c$.
		\item $(cb_1)\wedge_p(cb_2)\wedge_p\cdots\wedge_p(cb_n)=c(b_1\wedge_p\cdots \wedge_p b_n)$.
		\item $(cb_1)\vee_p(cb_2)\vee_p\cdots\vee_p(cb_n)=c(b_1\vee_p\cdots \vee_p b_n)$.
	\end{enumerate}
\end{lemma}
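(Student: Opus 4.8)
The plan is to prove all four assertions by reducing them to the lattice structure on $\G_{x\to}$ and $\G_{\to x}$ established in Lemma~\ref{lem:lattice hom}, together with the order-reversing bijection $h\mapsto h^{-1}$ from Lemma~\ref{lem:basic properties}(2). The key observation for (1) and (2) is that the conditions $a_ib_i = c$ say exactly that $b_i = a_i^{-1}c$, so that as $i$ varies we are looking at the suffixes obtained by cancelling a common tail $c$ from the right of the $a_i^{-1}$'s; dually, the $a_i$ are prefixes of $c$ recording how much of $c$ is ``used up.'' So I would first verify the common-object bookkeeping (all $a_i$ share source $x$ and target, say, $y$; all $b_i\colon y\to z$; all $c\colon x\to z$), then carry out the cancellation argument.

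\smallskip\noindent\textbf{Proof.}
We prove (1); the proof of (2) is symmetric, exchanging the roles of $\preccurlyeq$ and $\succcurlyeq$ (equivalently, $\vee_p\leftrightarrow\wedge_p$, $\wedge_s\leftrightarrow\vee_s$). Write $x\stackrel{a_i}{\to} y\stackrel{b_i}{\to} z$, so $c\colon x\to z$. Each $a_i^{-1}\colon y\to x$ lies in $\G_{y\to}$, and from $a_ib_i=c$ we get $b_i = a_i^{-1}c$, i.e.\ $c^{-1}a_i = b_i^{-1}$, hence $a_i^{-1}\cdot c = b_i$; in particular $a_i^{-1}\succcurlyeq$-dominates nothing directly, so instead we work with the element $c\in\G_{x\to}$ viewed via right-multiplication. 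Concretely, by Lemma~\ref{lem:basic properties}(1) applied to $a_i\colon x\to y$, the map $\rho_i\colon(\G_{y\to},\preccurlyeq)\to(\G_{x\to},\preccurlyeq)$, $h\mapsto a_ih$, is a lattice monomorphism, and $\rho_i(b_i)=c$ for every $i$. Now let $a=a_1\vee_p\cdots\vee_p a_n$ and $b=b_1\wedge_s\cdots\wedge_s b_n$. Since each $a_i\preccurlyeq c$ (as $a_ib_i=c$ with $b_i\in\C$), and $c$ is an upper bound, we have $a\preccurlyeq c$, so $a b' = c$ for a unique $b'\in\C$ (uniqueness by cancellativity). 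It remains to identify $b'$ with $b$. From $a_i\preccurlyeq a$ we get $a = a_i k_i$ for $k_i\in\C$, and then $a_i k_i b' = c = a_i b_i$, so by left-cancellation $k_i b' = b_i$, i.e.\ $b'$ is a suffix of every $b_i$: $b'\succcurlyeq\cdots$, precisely $b_i\succcurlyeq b'$ for all $i$, hence $b\succcurlyeq b'$, say $b = b' k$ with... wait, $b=b_1\wedge_s\cdots\wedge_s b_n$ is the \emph{greatest} lower bound in $\succcurlyeq$, so $b\succcurlyeq b'$ gives $b = k' b'$ for some $k'\in\C$. Conversely I claim $b'\succcurlyeq b$: since $b\succcurlyeq b'$... no — I instead show $ab = c$ directly, then conclude $b=b'$ by cancellativity. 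For the reverse: each $b_i\succcurlyeq b$, so $b_i = l_i b$ with $l_i\in\C$; then $c = a_i b_i = a_i l_i b$, so $a_i l_i\preccurlyeq c$ via the factor $b$. Moreover $a_i\preccurlyeq a_i l_i$, and the $a_i l_i$ all lie in $\G_{\to(\text{target of }b)}$ with a common tail; applying Lemma~\ref{lem:order}(4)-type manipulation, $\bigvee_p(a_i l_i)$ exists and equals the unique $e$ with $e b = c$. Since $a_i\preccurlyeq a_i l_i\preccurlyeq e$, we get $a\preccurlyeq e$, so $a \preccurlyeq e$ and $eb=c$. Combined with $ab'=c$ and $a\preccurlyeq e$: write $e = a m$, then $a m b = c = a b'$, so $m b = b'$, whence $b'\succcurlyeq b$. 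Together with $b\succcurlyeq b'$ this gives $b = b'$ (using that $\succcurlyeq$ is a partial order), so $ab = ab' = c$, proving (1).

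\smallskip
For (2), apply (1) inside the opposite category, or equivalently use the order-reversing bijection of Lemma~\ref{lem:basic properties}(2),(3): from $a_ib_i=c$ one has $b_i^{-1}a_i^{-1}=c^{-1}$, and $a_1^{-1}\wedge_p\cdots\wedge_p a_n^{-1}=(a_1\vee_s\cdots)^{-1}$ etc. I would spell out the translation and then invoke (1).

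\smallskip
Assertions (3) and (4) are the statement that left-multiplication by a fixed morphism $c\colon x\to z$, restricted to $\C_{z\to}$ (or $\G_{z\to}$), is a lattice \emph{embedding} into $\G_{x\to}$ — that is, it commutes with $\wedge_p$ and $\vee_p$. But this is exactly Lemma~\ref{lem:basic properties}(1) with $f = c$: the map $h\mapsto ch$ is a lattice monomorphism $(\G_{z\to},\preccurlyeq)\to(\G_{x\to},\preccurlyeq)$, so it preserves finite meets and joins, which is precisely (3) and (4) written out for $n$ elements $b_1,\dots,b_n\in\C_{z\to}$ (an induction on $n$ reduces the $n$-fold identity to the binary case that a lattice homomorphism satisfies by definition). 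So for (3) and (4) I would simply cite Lemma~\ref{lem:basic properties}(1) and note the straightforward induction.

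\smallskip
The main obstacle is the first equality $ab=c$ in part (1): the subtlety is that $a=\bigvee_p a_i$ could a priori be a proper prefix of $c$ that fails to have the ``right'' complementary suffix, and one must show the complement is exactly $\bigwedge_s b_i$ rather than something smaller. The clean way around this is the two-sided squeeze above — show both $b\succcurlyeq b'$ and $b'\succcurlyeq b$ where $b'$ is the unique complement of $a$ in $c$ — using cancellativity at each step; this is where homogeneity/cancellativity (Definition~\ref{def:Garside}(2)) is genuinely used. Everything else is formal consequence of the lattice structure.
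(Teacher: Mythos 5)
Your proposal is correct in substance, but for parts (1)--(2) it takes a genuinely different route from the paper's. The paper disposes of (1) in two lines: left translation by $c^{-1}$ is a lattice monomorphism (Lemma~\ref{lem:basic properties}(1)), so $c^{-1}a=(c^{-1}a_1)\vee_p\cdots\vee_p(c^{-1}a_n)=b_1^{-1}\vee_p\cdots\vee_p b_n^{-1}$, and by the inversion anti-isomorphism (Lemma~\ref{lem:basic properties}(3)) this equals $(b_1\wedge_s\cdots\wedge_s b_n)^{-1}=b^{-1}$, whence $ab=c$; part (2) is the same computation with meets, and (3)--(4) are quoted directly from Lemma~\ref{lem:basic properties}(1), exactly as you do. Your proof of (1) instead runs a two-sided divisibility squeeze: $a\preccurlyeq c$ yields the complement $b'$ with $ab'=c$, cancellation gives $b_i\succcurlyeq b'$ and hence $b\succcurlyeq b'$, and a second cancellation gives $b'\succcurlyeq b$, so $b=b'$ by antisymmetry. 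This is valid and slightly more elementary (it never invokes the inversion anti-isomorphism), at the cost of length and some avoidable clutter: the passage about $\bigvee_p(a_il_i)$ and a ``Lemma~\ref{lem:order}(4)-type manipulation'' is unnecessary and sounds circular --- from $a_il_ib=c$ you get $a_il_i=cb^{-1}=e$ outright for every $i$, so $e$ is an upper bound of the $a_i$ and $a\preccurlyeq e$ follows; likewise the aside ``$a_il_i\preccurlyeq c$ via the factor $b$'' tacitly assumes $b\in\C$ (true, since the inclusion $\C_{\to t(c)}\to\G_{\to t(c)}$ is a lattice homomorphism by Lemma~\ref{lem:lattice hom}), but it is not needed anywhere.

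One caveat on (2): the shortcut ``apply (1) in the opposite category, or invert and invoke (1)'' does not type-check as stated. Inverting $a_ib_i=c$ gives $b_i^{-1}a_i^{-1}=c^{-1}$, where the factors known to lie in $\C$ now sit in the first position, while (1) requires the second factors to be in $\C$; similarly, the op-dual of (1) is the mirror statement about products $b_ia_i=c$ with first factors in $\C$, involving $\wedge_p$ and $\vee_s$ on the respective sides --- not statement (2). So either carry out your squeeze symmetrically (from $a\preccurlyeq a_i$ one gets $k_ib_i=a^{-1}c$ for all $i$, hence $a^{-1}c\succcurlyeq b$; conversely $b\succcurlyeq b_i$ gives $cb^{-1}\preccurlyeq a_i$ for all $i$, hence $cb^{-1}\preccurlyeq a$ and $b\succcurlyeq a^{-1}c$), or simply run the paper's computation $c^{-1}a=\bigwedge_p(c^{-1}a_i)=\bigwedge_p b_i^{-1}=(\bigvee_s b_i)^{-1}=b^{-1}$ using Lemma~\ref{lem:basic properties}(3). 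With that repair, and with (3)--(4) handled exactly as in the paper, your argument is complete.
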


\begin{proof}
For (1), note that all the $a_i$ have the same source object and all the $b_i$ have the same target object. Thus $a$ and $b$ are well-defined. It follows from Lemma~\ref{lem:basic properties} (1) that $c^{-1}a=(c^{-1}a_1)\vee_p (c^{-1}a_2)\vee_p\cdots\vee_p (c^{-1}a_n)=b^{-1}_1\vee_p b^{-1}_2\vee_p\cdots\vee_p b^{-1}_n$. By Lemma~\ref{lem:basic properties}~(3), $b^{-1}_1\vee_p\cdots\vee_p b^{-1}_n=(b_1\wedge_s b_2\wedge_s\cdots\wedge_s b_n)^{-1}=b^{-1}$. Thus (1) follows. Assertion (2) can be proved similarly. (3) and (4) follow from Lemma~\ref{lem:basic properties}~(1).
\end{proof}

For a morphism $f$ of $\G$, denote the source object and the target object of $f$ by $s(f)$ and $t(f)$.

\begin{lemma}
	\label{lem:simple element1}
Suppose $f$ is a morphism of $\G$. Then $f\preccurlyeq \Delta_{s(f)}$ if and only if $\Delta^{t(f)}\succcurlyeq f$. 
\end{lemma}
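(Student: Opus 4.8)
The plan is to unwind both sides of the equivalence into explicit factorizations and then pass between them using naturality of $\Delta$ together with cancellativity. Write $x:=s(f)$ and $y:=t(f)$, so $f\colon x\to y$. By definition, $f\preccurlyeq\Delta_{s(f)}=\Delta_x$ means that there is a morphism $f^\ast$ of $\C$ with $f f^\ast=\Delta_x$ (necessarily $f^\ast\colon y\to\phi(x)$); that is, $f$ is simple. Likewise $\Delta^{t(f)}=\Delta^y=\Delta_{\phi^{-1}(y)}\succcurlyeq f$ means that there is a morphism $w$ of $\C$ with $w f=\Delta_{\phi^{-1}(y)}$ (necessarily $w\colon\phi^{-1}(y)\to x$). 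So the task is to produce one such factorization from the other.

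First I would record the auxiliary fact that $\Delta$ is $\phi$-equivariant: $\phi(\Delta_x)=\Delta_{\phi(x)}$ for every object $x$, equivalently $\phi^{-1}(\Delta_y)=\Delta_{\phi^{-1}(y)}=\Delta^y$. This follows by applying the naturality square of $\Delta\colon 1_\G\Rightarrow\phi$ to the morphism $\Delta_x$ itself, which gives $\Delta_x\cdot\Delta_{\phi(x)}=\Delta_x\cdot\phi(\Delta_x)$; left-cancelling $\Delta_x$ (legitimate since $\G$ is a groupoid, hence cancellative) yields the claim.

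For the forward implication, assume $f f^\ast=\Delta_x$ with $f^\ast$ in $\C$. Naturality of $\Delta$ applied to $f\colon x\to y$ reads $f\,\Delta_y=\Delta_x\,\phi(f)$. Substituting $\Delta_x=f f^\ast$ gives $f\,\Delta_y=f\,f^\ast\,\phi(f)$, and left-cancelling $f$ yields $\Delta_y=f^\ast\,\phi(f)$. Applying $\phi^{-1}$ and using the equivariance above, $\Delta^y=\Delta_{\phi^{-1}(y)}=\phi^{-1}(f^\ast)\,f$; since $\phi^{-1}$ is an automorphism of $\C$, the factor $\phi^{-1}(f^\ast)$ lies in $\C$, so $\Delta^{t(f)}\succcurlyeq f$. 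The converse is symmetric: from $w f=\Delta_{\phi^{-1}(y)}$ with $w$ in $\C$, apply $\phi$ and equivariance to get $\phi(w)\,\phi(f)=\Delta_y$, plug this into the naturality square $\Delta_x\,\phi(f)=f\,\Delta_y=f\,\phi(w)\,\phi(f)$, and right-cancel $\phi(f)$ to obtain $\Delta_x=f\,\phi(w)$ with $\phi(w)$ in $\C$, i.e.\ $f\preccurlyeq\Delta_{s(f)}$.

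There is no substantial obstacle here; the only points requiring care are (i) the bookkeeping of source and target objects, since we work in a groupoid rather than a monoid, so one must check at each step that the relevant composites are defined and land where claimed, and (ii) verifying that each cancellation used is a genuine left- or right-cancellation, which is fine because $\G$ is cancellative (this is also inherited from the cancellativity of $\C$). If one prefers not to invoke naturality in $\G$, the identical computation can be carried out inside $\C$, since $f$, $f^\ast$, $w$ and all their $\phi^{\pm 1}$-images are morphisms of $\C$.
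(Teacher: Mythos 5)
Your proof is correct and follows essentially the same route as the paper: both arguments combine the naturality of $\Delta$ (as extended to $\G$) with cancellativity, the only cosmetic difference being that you apply the naturality square to $f$ itself and separately derive $\phi(\Delta_x)=\Delta_{\phi(x)}$, whereas the paper applies the square to $\phi^{-1}(f^\ast)$ and cancels $f^\ast$ on the right. (Your optional closing aside that the computation could be done entirely in $\C$ quietly assumes $f$ itself lies in $\C$, which is not part of the hypothesis, but nothing in the main argument uses this.)
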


\begin{proof}
If $f\preccurlyeq \Delta_{s(f)}$, then $\Delta_{s(f)}=fg$ for $g$ in $\C$. Let $h$ be the unique morphism such that $g=\phi(h)$. By the definition of $\phi$, we know $h$ is in $\C$. Moreover, $h$ fits into the following commuting diagram:
\begin{center}
$\begin{CD}
	\phi^{-1}(t(f))                        @>h>>         s(f)\\
	@VV\Delta^{t(f)}V                         @VV\Delta_{s(f)}V\\
	t(f)=s(g)                     @>g>>      t(g)=t(\Delta_{s(f)})
\end{CD}$
\end{center}
Thus $h\Delta_{s(f)}=\Delta^{t(f)}g$, which implies that $h(fg)=\Delta^{t(f)}g$. Hence $hf=\Delta^{t(f)}$ and $\Delta^{t(f)}\succcurlyeq f$. The other direction is similar.
\end{proof}

\begin{lemma}
	\label{lem:simple element2}
Let $x$ be an object of $\G$. Let $\Phi_1$ be the collection of simple morphisms of $\G$ with target object $x$ and let $\Phi_2$ be the collection of simple morphisms with source object $x$. For $a\in \Phi_1$, we define $a^\ast$ to be the morphism such that $aa^\ast=\Delta_{s(a)}$. Then 
	\begin{enumerate}
		\item the map $a\to a^\ast$ gives a bijection $\Phi_1\to \Phi_2$;
		\item for $a,b\in \Phi_1$, we have $b\succcurlyeq a$ if and only if $b^\ast\preccurlyeq a^\ast$;
		\item for $a,b\in \Phi_1$, we have $(a\wedge_s b)^\ast=a^\ast\vee_p b^\ast$ and $(a \vee_s b)^\ast=a^\ast \wedge_p b^\ast$.
	\end{enumerate}
\end{lemma}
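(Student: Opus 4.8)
The plan is to prove the three assertions in order: (1) and (2) by direct manipulations with the naturality square of $\Delta$ together with cancellativity (the same engine as Lemma~\ref{lem:simple element1}), and (3) as a purely formal consequence of (2). For (1), I first check that $a\mapsto a^\ast$ really maps $\Phi_1$ into $\Phi_2$. Writing $w=s(a)$, so that $a\colon w\to x$, $a^\ast\colon x\to\phi(w)$ and $aa^\ast=\Delta_w$, naturality of $\Delta$ on the morphism $a$ gives $\Delta_w\,\phi(a)=a\,\Delta_x$; substituting $\Delta_w=aa^\ast$ and left-cancelling $a$ yields $a^\ast\phi(a)=\Delta_x=\Delta_{s(a^\ast)}$, exhibiting $a^\ast$ as simple with $(a^\ast)^\ast=\phi(a)$, hence $a^\ast\in\Phi_2$. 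The same computation applied to $b\in\Phi_2$, with $y=t(b)$, shows $b^\ast$ is simple with $(b^\ast)^\ast=\phi(b)$, and applying $\phi^{-1}$ to $b^\ast\phi(b)=\Delta_y$ shows $\phi^{-1}(b^\ast)\colon\phi^{-1}(y)\to x$ is simple, hence lies in $\Phi_1$, with $\bigl(\phi^{-1}(b^\ast)\bigr)^\ast=b$. Then $a\mapsto a^\ast$ and $b\mapsto\phi^{-1}(b^\ast)$ are mutually inverse, since $\phi^{-1}\bigl((a^\ast)^\ast\bigr)=\phi^{-1}(\phi(a))=a$ and $\bigl(\phi^{-1}(b^\ast)\bigr)^\ast=b$; this gives (1).

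For (2), let $a,b\in\Phi_1$ with sources $w_a,w_b$. If $b\succcurlyeq a$, write $b=za$ with $z\colon w_b\to w_a$ a morphism of $\C$. Naturality of $\Delta$ on $z$ gives $\Delta_{w_b}\phi(z)=z\,\Delta_{w_a}=z\,a\,a^\ast$, while $bb^\ast=\Delta_{w_b}$ reads $z\,a\,b^\ast=\Delta_{w_b}$; multiplying the latter on the right by $\phi(z)$ and comparing with the former, cancellativity gives $b^\ast\phi(z)=a^\ast$, so $a^\ast=b^\ast\phi(z)$ with $\phi(z)$ in $\C$, i.e.\ $b^\ast\preccurlyeq a^\ast$. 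Conversely, if $b^\ast\preccurlyeq a^\ast$, write $a^\ast=b^\ast w$ with $w$ in $\C$; since $\phi$ is an automorphism of $\C$ we have $w=\phi(z)$ for a unique $\C$-morphism $z\colon w_b\to w_a$. Then $\Delta_{w_a}=aa^\ast=a\,b^\ast\phi(z)$, and naturality of $\Delta$ on $z$ gives $\Delta_{w_b}\phi(z)=z\,\Delta_{w_a}=z\,a\,b^\ast\phi(z)$; cancelling $\phi(z)$ on the right, then $b^\ast$ on the right of $\Delta_{w_b}=z\,a\,b^\ast=b\,b^\ast$, gives $b=za$, i.e.\ $b\succcurlyeq a$.

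For (3), I first observe that $\Phi_1$ is closed under $\wedge_s$ and $\vee_s$ computed in the lattice $\G_{\to x}$, and $\Phi_2$ under $\wedge_p$ and $\vee_p$ computed in $\G_{x\to}$. Indeed, by Lemma~\ref{lem:simple element1} a morphism with target $x$ is simple exactly when $\Delta^x$ dominates it in the order $\succcurlyeq$; since $a,b\in\Phi_1$ we have $\Delta^x\succcurlyeq a$ and $\Delta^x\succcurlyeq b$, so $\Delta^x$ is a $\succcurlyeq$-upper bound of $\{a,b\}$, whence $\Delta^x\succcurlyeq a\vee_s b$, and also $\Delta^x\succcurlyeq a\succcurlyeq a\wedge_s b$; therefore $a\vee_s b,\,a\wedge_s b\in\Phi_1$, and the argument for $\Phi_2$ using $\Delta_x$ is identical. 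Thus $(\Phi_1,\succcurlyeq)$ and $(\Phi_2,\preccurlyeq)$ are sublattices of $\G_{\to x}$ and $\G_{x\to}$, and by (2) the bijection of (1) satisfies $a\succcurlyeq b\iff a^\ast\preccurlyeq b^\ast$, i.e.\ it is an order-reversing bijection, hence a lattice anti-isomorphism. Applying it to meets and joins yields $(a\wedge_s b)^\ast=a^\ast\vee_p b^\ast$ and $(a\vee_s b)^\ast=a^\ast\wedge_p b^\ast$.

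The content is elementary, so the real difficulty is bookkeeping: tracking source and target objects and the $\phi$-shifts (the operation $\ast$ does not square to the identity but to $\phi$, which is exactly why the inverse map in (1) is $b\mapsto\phi^{-1}(b^\ast)$ rather than $b\mapsto b^\ast$), and being careful with the closedness statements in (3) so that the anti-isomorphism argument is legitimate. The only non-formal ingredient throughout is the naturality of $\Delta$ together with cancellativity.
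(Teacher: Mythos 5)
Your proof is correct, and it runs on the same engine as the paper's (cancellativity plus the defining data $\phi$, $\Delta$), but it is organized differently: the paper pipes both (1) and (2) through the divisibility duality of Lemma~\ref{lem:simple element1} ($f\preccurlyeq\Delta_{s(f)}\Leftrightarrow\Delta^{t(f)}\succcurlyeq f$), disposing of (1) with ``injectivity is clear, surjectivity follows from Lemma~\ref{lem:simple element1}'' and of (2) with a one-line cancellation, whereas you unwind the naturality square of $\Delta$ directly. What your route buys is sharper, explicit information the paper never records: the identity $(a^\ast)^\ast=\phi(a)$, the explicit two-sided inverse $b\mapsto\phi^{-1}(b^\ast)$ in (1), and the explicit complement $a^\ast=b^\ast\phi(z)$ in (2); what the paper's route buys is brevity, since Lemma~\ref{lem:simple element1} packages the naturality computation once and for all. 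Two small remarks: you silently use $\phi^{-1}(\Delta_y)=\Delta_{\phi^{-1}(y)}$, which is fine but deserves a line (apply naturality of $\Delta$ to the morphism $\Delta_z$ and cancel); and in (3), if one insists that simple morphisms lie in $\C$ (as the finiteness assumption in the definition of finite type forces), the membership of $a\wedge_s b$ and $a\vee_s b$ in $\C$ is supplied by Lemma~\ref{lem:lattice hom} --- with that addendum your closedness argument, which is the same as (and slightly more detailed than) the paper's, is complete.
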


\begin{proof}
For (1), since $\Delta^{t(a^\ast)}=\Delta_{s(a)}\succcurlyeq a^\ast$, we have $a^\ast\preccurlyeq \Delta_{s(a^\ast)}$ by Lemma~\ref{lem:simple element1}. Thus $a^\ast\in\Phi_2$. The map is clearly injective and surjectivity follows from Lemma~\ref{lem:simple element1}. For (2), $b\succcurlyeq a$ implies $b=ka$ for a morphism $k$ of $\C$. From $\Delta_{s(b)}=bb^\ast=kab^\ast$, we know $\Delta^{t(b^\ast)}=\Delta_{s(b)}\succcurlyeq ab^\ast$, hence $ab^\ast\preccurlyeq \Delta_{s(a)}$ by Lemma~\ref{lem:simple element1}. Then there is a morphism $h$ of $\C$ such that $ab^\ast h=\Delta_{s(a)}=aa^\ast$, hence $b^\ast h=a^\ast$ and $b^\ast\preccurlyeq a^\ast$. The other direction is similar. For (3), note that elements in $\Phi_1$ (resp.\ $\Phi_2$) are closed under $\wedge_s$ and $\vee_s$ (resp.\ $\wedge_p$ and $\vee_p$), so $\phi_1$ and $\Phi_2$ are lattices. Now assertion (3) follows from (2) and (1).
\end{proof}

\subsection{Garside groups are Helly} 
\label{ss:Garside groups are Helly}
Let $\G$ be a Garside groupoid and let $x$ be an object of $\G$. A \emph{cell} of $\G_{x\to}$ is an interval of the form $[f,f\Delta_{t(f)}]$ with respect to the order $\preccurlyeq$.
\begin{lemma}
	\label{lem:spherical triple intersection}
	Let $C_1,C_2$ and $C_3$ be pairwise intersecting cells of $\G_{x\to}$. Then there is a cell $C$ of $\G_{x\to}$ such that $D=(C_1\cap C_2)\cup(C_2\cap C_3)\cup (C_3\cap C_1)$ is contained in $C$.
\end{lemma}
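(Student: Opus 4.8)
The plan is to reduce everything to the lattice structure of $\G_{x\to}$ and $\G_{\to x}$, exactly as Lemma~\ref{lem:lattice} does for intervals in an ordinary lattice, but keeping track of the extra rigidity coming from the fact that a cell has the special form $[f,f\Delta_{t(f)}]$. First I would write $C_k=[f_k,f_k\Delta_{t(f_k)}]$ for $k=1,2,3$, where each $f_k$ is a morphism of $\G$ with source $x$ (so $f_k\in\G_{x\to}$, a lattice by Lemma~\ref{lem:lattice hom}). Since $\G_{x\to}$ is a lattice and the three cells pairwise intersect, Lemma~\ref{lem:lattice}(1) (applied inside this lattice, whose intervals behave as in Birkhoff's theorem) gives $C_i\cap C_j=[f_i\vee_p f_j,\ (f_i\Delta_{t(f_i)})\wedge_p(f_j\Delta_{t(f_j)})]$ for each pair. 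I expect the key computational point to be identifying the top of the pairwise intersection: using Lemma~\ref{lem:order}(3) one rewrites $f_i\Delta_{t(f_i)}=f_i\cdot\Delta_{t(f_i)}$ and $f_j\Delta_{t(f_j)}=f_j\cdot\Delta_{t(f_j)}$ in a way that the meet can be pulled out — or, more precisely, one uses that a simple morphism is a prefix of $\Delta$ (Lemma~\ref{lem:simple element1}) to see that the right endpoint of $C_i\cap C_j$ is of the form $g_{ij}\Delta^{\bullet}$ for a suitable morphism, i.e.\ the intersection is governed by simple morphisms between the two relevant objects.

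Next I would construct the candidate cell $C$. The natural choice is $C=[f,\,f\Delta_{t(f)}]$ where $f=f_1\vee_p f_2\vee_p f_3$, the join in $\G_{x\to}$ of the three bottom vertices. One checks $f$ exists because all $f_k$ share the source object $x$; its target object is the common target forced by the lattice structure. I then need to verify $D=(C_1\cap C_2)\cup(C_2\cap C_3)\cup(C_3\cap C_1)\subseteq C$. Containment of bottoms is immediate: $f_i\vee_p f_j\succcurlyeq f_i$ in the prefix sense, hence $f\preccurlyeq$-dominates... rather, $f_i\vee_p f_j$ has $f$ as a prefix? — this is the delicate orientation issue, so I would be careful: each element of $C_i\cap C_j$ is $\succcurlyeq_p$ both $f_i$ and $f_j$, hence $\succcurlyeq_p f_i\vee_p f_j\succcurlyeq_p f$, so it lies above the bottom of $C$. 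For the top, each element of $C_i\cap C_j$ is $\preccurlyeq_p$ both $f_i\Delta_{t(f_i)}$ and $f_j\Delta_{t(f_j)}$; I must show it is also $\preccurlyeq_p f\Delta_{t(f)}$. Here is where the Garside-specific input enters: since the element lies below $f_i\Delta_{t(f_i)}$ and above $f$, writing it as $f g$ for $g$ a morphism of $\C$, one needs $g\preccurlyeq\Delta_{t(f)}$, i.e.\ $g$ is simple; this should follow from the fact that the "complement" $f\backslash(f_i\Delta_{t(f_i)})$ is simple (a left-divisor of a shift of $\Delta$), combined with $f_i\preccurlyeq f$, via the suffix/simple calculus in Lemma~\ref{lem:simple element2} and the naturality square in Lemma~\ref{lem:simple element1}.

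The main obstacle I anticipate is precisely this last verification — showing the top of each pairwise intersection is a prefix of $f\Delta_{t(f)}$ — because it is not a formal consequence of lattice theory alone; it genuinely uses that cells are $\Delta$-intervals, so that "above $f$ and below $f_i\Delta_{t(f_i)}$" forces the quotient to be simple. The clean way to handle it is to dualize: apply Lemma~\ref{lem:basic properties}(3) and Lemma~\ref{lem:simple element2}(3) to transport the statement about prefixes of $\Delta_{t(f)}$ into a statement about suffixes of $\Delta^{\bullet}$ among simple morphisms with a fixed target, where the lattice $\Phi_1$ of simples is available, and then the join $f=f_1\vee_p f_2\vee_p f_3$ corresponds under $\ast$ to a meet of simples, for which the required domination is immediate. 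I would also note the edge cases (a cell degenerating to a single object, or $\Delta_{t(f)}$ being trivial), which are handled trivially since then the corresponding $C_k$ is a point. Assembling these, $D\subseteq C$ with $C$ a cell, which is the claim.
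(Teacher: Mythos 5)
Your candidate cell is built on the wrong bottom vertex, and this is not a repairable slip of orientation but the point where the construction breaks. You set $f=f_1\vee_p f_2\vee_p f_3$ and claim that every element of $C_i\cap C_j$ is $\succcurlyeq_p f_i\vee_p f_j\succcurlyeq_p f$; the second inequality is backwards, since the triple join dominates each pairwise join, $f_i\vee_p f_j\preccurlyeq f_1\vee_p f_2\vee_p f_3$, rather than the other way around. Concretely, in the braid group $B_3$ with its classical Garside structure ($\Delta=aba$), take $C_1=[1,\Delta]$, $C_2=[a,a\Delta]$, $C_3=[b,b\Delta]$: these pairwise intersect, your $f$ equals $1\vee_p a\vee_p b=\Delta$, but $a\in C_1\cap C_2$ is not $\succcurlyeq_p\Delta$, so $D\not\subseteq[f,f\Delta_{t(f)}]$. (Your verification of the top containment with this $f$ is actually fine --- $f_i\preccurlyeq f$ does give $f_i\Delta_{t(f_i)}\preccurlyeq f\Delta_{t(f)}$ by naturality of $\Delta$ --- but that does not save the bottom.)

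The correct bottom is the greatest lower bound of $D$, namely $f=f_{12}\wedge_p f_{23}\wedge_p f_{31}$ where $f_{ij}=f_i\vee_p f_j$, and with that choice the difficulty migrates entirely to the top: one must show that $f^{-1}g$ is bounded above by a $\Delta$, where $g=g_{12}\vee_p g_{23}\vee_p g_{31}$, and this is not formal lattice theory. The paper's proof supplies exactly the missing mechanism: pick $h\in C_1\cap C_2\cap C_3$ (nonempty by Lemma~\ref{lem:lattice}(2)), write $h=f_iw_i$ and $g_i=hw_i^\ast$ with $w_iw_i^\ast=\Delta$, so that $f\cdot a=h$ and $h\cdot b=g$ with $a=(w_1\wedge_s w_2)\vee_s(w_2\wedge_s w_3)\vee_s(w_3\wedge_s w_1)$ and $b=(w_1^\ast\wedge_p w_2^\ast)\vee_p(w_2^\ast\wedge_p w_3^\ast)\vee_p(w_3^\ast\wedge_p w_1^\ast)$ (Lemma~\ref{lem:order}); then $a$ is simple by Lemma~\ref{lem:simple element1}, and $b\preccurlyeq a^\ast=(w_1^\ast\vee_p w_2^\ast)\wedge_p(w_2^\ast\vee_p w_3^\ast)\wedge_p(w_3^\ast\vee_p w_1^\ast)$ by Lemma~\ref{lem:simple element2}(3), because any two of the three index pairs share an element. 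Your final paragraph gestures at this dualization via $\ast$, but anchored to the wrong $f$ and without the pivot element $h$ and the decompositions $w_i,w_i^\ast$, the argument as proposed does not go through.
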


\begin{proof}
	Since each $C_i$ is an interval in $(\G_{x\to},\preccurlyeq)$, we write $C_i=[f_i,g_i]$. Let $C_{ij}=C_i\cap C_j$. Then each $C_{ij}$ is also an interval (Lemma~\ref{lem:lattice} (1)). We write $C_{ij}=[f_{ij},g_{ij}]$. Then $f_{ij}=f_i\vee_p f_j$ and $g_{ij}=g_i\wedge_p g_j$.
	Choose a morphism $h$ in $\cap_{i=1}^3 C_i$ (Lemma~\ref{lem:lattice} (2)). Since $f_i\preccurlyeq h\preccurlyeq g_i$, there are morphisms $w_i$ and $w^\ast_i$ in $\C$ such that $h=f_iw_i$ and $g_i=hw^\ast_i$. Note that $w_iw^{\ast}_i=\Delta_{s(f_i)}$. Moreover, by Lemma~\ref{lem:order} (1) and (3), we have
	\begin{equation}
	\label{eq:1}
	f_{ij}\cdot (w_i \wedge_s w_j)=h\ \mathrm{and}\ h\cdot(w^{\ast}_i \wedge_p w^{\ast}_j)=g_{ij}.
	\end{equation}

	Let $f$ (resp.\ $g$) be the greatest lower bound (resp.\ least upper bound) over all elements in $D$ with respect to the prefix order $\preccurlyeq$. Since $D$ is a union of three intervals, we have $f=f_{12}\wedge_p f_{23}\wedge_p f_{31}$ and $g=g_{12}\vee_p g_{23}\vee_p g_{31}$. Define $a= (w_1 \wedge_s w_2) \vee_s (w_2 \wedge_s w_3) \vee_s (w_3 \wedge_s w_1)$
	and $b = (w^{\ast}_1 \wedge_p w^{\ast}_2) \vee_p (w^{\ast}_2 \wedge_p w^{\ast}_3) \vee_p (w^{\ast}_3 \wedge_p w^{\ast}_1)$. Then by the formula (\ref{eq:1}) above and Lemma~\ref{lem:order} (2) and (4), we have $fa=h$ and $hb=g$.
	
	We claim $ab\preccurlyeq \Delta_{s(a)}$. This claim implies that the cell $[f,f\Delta_{s(a)}]$ contains the interval $[f,g]$, hence contains $D$. Thus the lemma follows from the claim.
	
	Now we prove the claim. Since $w_i\preccurlyeq \Delta_{s(w_i)}$ for $1\le i\le 3$, by Lemma~\ref{lem:simple element1}, $\Delta_{t(w_i)}\succcurlyeq w_i$. Then $\Delta_{t(w_i)}\succcurlyeq a$. By Lemma~\ref{lem:simple element1} again, $a\preccurlyeq \Delta_{s(a)}$. Let $a^\ast$ be a morphism in $\C$ such that $aa^\ast=\Delta_{s(a)}$. It suffices to show $b\preccurlyeq a^{\ast}$. By repeatedly applying Lemma~\ref{lem:simple element2} (3), we have $a^{\ast}= (w_1 \wedge_s w_2)^\ast \wedge_p (w_2 \wedge_s w_3)^\ast \wedge_p (w_3 \wedge_s w_1)^\ast=(w^{\ast}_1 \vee_p w^{\ast}_2) \wedge_p (w^{\ast}_2 \vee_p w^{\ast}_3) \wedge_p (w^{\ast}_3 \vee_p w^{\ast}_1)$. Note that $b$ is the least upper bound of three terms, however, each of these terms is $\preccurlyeq$ every term of $a^{\ast}$, so each of these terms is $\preccurlyeq a^{\ast}$, hence $b\preccurlyeq a^{\ast}$.
\end{proof}

\begin{theorem}
	\label{t:Garside Helly}
	Let $G$ be a weak Garside group of finite type associated with a Garside groupoid $\G$. Then $G$ acts geometrically on a Helly graph.
\end{theorem}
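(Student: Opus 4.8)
The plan is to build the relevant complex and apply Proposition~\ref{p:Helly_action}. Fix the Garside groupoid $\G$ and an object $x$, so that $G=\G_x$. Consider the set $\G_{x\to}$ of morphisms of $\G$ with source $x$; since $G$ acts on $\G_{x\to}$ by left translation (which is free and transitive on morphisms with a fixed target, and permutes targets transitively because $\G$ is connected... more precisely one restricts attention to the connected component of $x$), this set carries a natural $G$-action. On $\G_{x\to}$ I would put the structure of a combinatorial complex $X$ whose $0$-cells are the morphisms, and whose closed cells are exactly the intervals $[f,f\Delta_{t(f)}]$ for $f\in\G_{x\to}$ --- the ``cells in $\G_x$'' of the preceding definition. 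Each such interval, by the lattice structure (Lemma~\ref{lem:lattice}) and finite type, is a finite lattice, and one endows it with the poset structure making it isomorphic (as a cell) to a Garside-simple interval; these are the $\{X_i\}$. The $G$-action clearly permutes these cells and is cocompact (finitely many $G$-orbits of cells because there are finitely many simple morphisms) and proper (cell stabilizers are trivial since left translation is free on morphisms).

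The main work is verifying that $(X,\{X_i\})$ is a \emph{generalized cell Helly} complex in the sense of Definition~\ref{d:cellHelly}, and that it is simply connected. For condition (2), pairwise-intersecting cells $C_1,\dots,C_n$ are pairwise-intersecting intervals in the lattice $(\G_x,\preccurlyeq)$ (equivalently in $(\G_{x\to},\preccurlyeq)$, which is a lattice by Lemma~\ref{lem:lattice hom}), so Lemma~\ref{lem:lattice}(2) gives a nonempty common intersection $[\bigvee f_i,\bigwedge g_i]$. For condition (1), each pairwise intersection of cells is an interval by Lemma~\ref{lem:lattice}(1), and an arbitrary intersection of cells is again an interval (or empty); intervals in this lattice are connected and simply connected --- here one invokes that the interval $[f,g]$ is itself a finite lattice, hence its order complex, equivalently the cell it spans, is contractible, so in particular connected and simply connected. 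Condition (3) is exactly the content of Lemma~\ref{lem:spherical triple intersection}, which produces a single cell $C$ containing $(C_1\cap C_2)\cup(C_2\cap C_3)\cup(C_3\cap C_1)$. Simple connectivity of $X$ should follow from the standard fact that the Cayley-graph-type complex built from a Garside structure is contractible (this is essentially Charney--Meier--Whittlesey / the contractibility of the ``Garside complex''); alternatively, since all the generalized-cell intersections are intervals, hence contractible, one could try to invoke Proposition~\ref{p:contract} in reverse, but that presupposes simple connectivity, so the cleanest route is to cite contractibility of the Garside complex directly, or prove it by collapsing using the $\Delta$-normal form.

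With those four facts in hand, Proposition~\ref{p:Helly_action} immediately gives that $G$ acts geometrically on the Helly graph which is the $1$-skeleton of the thickening of $(X,\{X_i\})$, finishing the proof.

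I expect the genuine obstacle to be twofold. First, bookkeeping around groupoids rather than monoids: one must be careful that ``cells in $\G_x$'' are indexed by all of $\G_{x\to}$ (morphisms out of $x$), that the left $G$-action is well-defined and cocompact, and that the various lattice operations $\wedge_p,\vee_p,\wedge_s,\vee_s$ used in Lemma~\ref{lem:spherical triple intersection} are consistent across the different source/target objects appearing; the excerpt has set up Lemmas~\ref{lem:basic properties}--\ref{lem:simple element2} precisely to handle this, so the main theorem's proof is largely an assembly of these. Second, pinning down simple connectivity (indeed contractibility) of $X$ rigorously: the honest statement needed is that the complex whose cells are the simple intervals is simply connected, which is the geometric heart and is where one genuinely uses finite type together with the existence of greatest common divisors --- I would cite the known contractibility of the Garside/Deligne-type complex (e.g.\ via the classifying-space results for Garside groupoids) rather than reprove it, and otherwise the argument is a direct consequence of the lattice-theoretic lemmas already established.
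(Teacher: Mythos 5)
Your proposal is essentially the paper's argument: the same vertex set $\G_{x\to}$, the same cells $[f,f\Delta_{t(f)}]$, the finite Helly property of cells coming from the lattice structure via Lemma~\ref{lem:lattice}, the triple condition coming from Lemma~\ref{lem:spherical triple intersection}, and a local-to-global passage to Hellyness. The differences are in packaging and in one deferred step. The paper works directly with the graph $Y$ on $\G_{x\to}$ (two morphisms adjacent iff they lie in a common cell), verifies the hypotheses of Lemma~\ref{lem:helly criterion1}, and then applies Theorem~\ref{thm:Helly}; you instead route through Definition~\ref{d:cellHelly}, Theorem~\ref{t:cH>H} and Proposition~\ref{p:Helly_action}, which works too but leaves the underlying combinatorial complex $X$ underspecified (the graph-level formulation sidesteps having to say what the higher cells of $X$ are, and only simple connectivity of the flag complex $F(Y)$ is ever needed). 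The step you defer to a citation is exactly where the paper does its real work: it proves simple connectivity of $F(Y)$ by pushing any loop into the Hasse diagram $Z$ of $(\G_{x\to},\preccurlyeq)$ and reducing the complexity $\max_i l(f_i)$, using $\wedge_p$, $\vee_s$ and Lemma~\ref{lem:simple element1} to see that each local modification takes place inside a simplex of $F(Y)$. Your alternative of quoting contractibility of the Charney--Meier--Whittlesey/Bessis complex is legitimate and is precisely what the paper's remark after the proof indicates; note, however, that that complex is built on the graph in which two morphisms are adjacent iff they differ by a single simple morphism, a proper subgraph of $Y$, so one still needs the (easy) bridging step homotoping each edge $\{f,fa^{-1}b\}$ of $Y$ across the triangle through $fa^{-1}$. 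Finally, a small slip: the left action of $\G_x$ on $\G_{x\to}$ does not permute targets at all --- it fixes them; cocompactness instead follows from finite type, which forces finitely many simple morphisms and hence finitely many objects, finitely many orbits of cells, and uniform local finiteness of $Y$.
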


\begin{proof}
Let $x$ be an object of $\G$. Suppose $G=\G_x$. Let $Y$ be a simplicial graph whose vertex set is $\G_{x\to}$ and two vertices are adjacent if they are in the same cell of $\G_{x\to}$. The left action $\G_x\curvearrowright\G_{x\to}$ preserves the collection of cells, hence induces an action $\G_x\curvearrowright Y$.

Note that $Y$ is uniformly locally finite. Indeed, given $f\in \G_{x\to}$, $g$ is adjacent to $f$ if and only if $g=fa^{-1}b$ for some simple morphisms $a$ and $b$ of $\C$. Since the collection of simple morphisms is finite, there are finitely many possibilities for such $g$. Thus $Y$ satisfies assumption (1) of Lemma~\ref{lem:helly criterion1} with $\{X_i\}_{i\in I}$ being cells of $\G_{x\to}$. Assumption (2) of Lemma~\ref{lem:helly criterion1} follows from Lemma~\ref{lem:lattice} (2), as each cell is an interval in the lattice $\G_{x\to}$. Assumption (3) of Lemma~\ref{lem:helly criterion1} follows from Lemma~\ref{lem:spherical triple intersection}. Thus $Y$ is finitely clique Helly by Lemma~\ref{lem:helly criterion1}. By Theorem~\ref{thm:Helly}, it remains to show that the flag complex $F(Y)$ of $Y$ is simply connected.
	
Let $Z$ be the cover graph of the poset $(\G_{x\to},\preccurlyeq)$, i.e.\ two vertices $f$ and $h$ are adjacent if $f=hk$ or $h=fk$ for some atom $k$. Since all atoms are simple, $Z$ is a subgraph of $Y$. As every morphism in $\C$ can be decomposed into a product of finitely many atoms by Definition~\ref{def:Garside} (2), each edge of $Y$ is homotopic rel its ends points in $F(Y)$ to an edge path in $Z$. Let $\omega\subset Y$ be an edge loop. We may assume $\omega\subset Z$ up to homotopy in $F(Y)$. Let $f\in\G_{x\to}$ be a common lower bound for the collection of vertices of $\omega$. We denote consecutive vertices of $\omega$ by $\{ff_i\}_{i=1}^n$ where each $f_i$ is a morphism of $\C$. We define the complexity $\tau(\omega)$ of $\omega$ to be $\max_{1\le i\le n}\{l(f_i)\}$, where $l$ is the length function from Section~\ref{subsec:Garside basic}. Suppose $l(f_{i_0})=\tau(\omega)$. Then, by definition of $l$, we have $l(f_{i_0-1})<l(f_{i_0})$ and $l(f_{i_0+1})<l(f_{i_0})$. Thus $f_{i_0}=f_{i_0-1}a_1$ and $f_{i_0}=f_{i_0+1}a_2$ for atoms $a_1$ and $a_2$ of $\C$. Let $f'_{i_0}=f_{i_0-1}\wedge_p f_{i_0+1}$. Let $\omega_1$ (resp.\ $\omega_2$) be the edge path in $Z$ from $f'_{i_0}$ to $f_{i_0-1}$ (resp.\ $f_{i_0+1}$) corresponding to a decomposition of $(f'_{i_0})^{-1}f_{i_0-1}$ (resp.\ $(f'_{i_0})^{-1}f_{i_0+1}$) into atoms ($\omega_i$ might be a point). We replace the sub-path $\overline{f_{i_0-1}f_{i_0}}\cup \overline{f_{i_0+1}f_{i_0}}$ of $\omega$ by $\omega_1\cup \omega_2$ to obtain a new edge loop $\omega'\subset Z$. Clearly $\tau(\omega')<\tau(\omega)$. Moreover, since $f'_{i_0}(a_1\vee_s a_2)=f_{i_0}$ (Lemma~\ref{lem:order}) and $a_1\vee_s a_2$ is simple, by Lemma~\ref{lem:simple element1} $\overline{f_{i_0-1}f_{i_0}}\cup \overline{f_{i_0+1}f_{i_0}}\cup \omega_1\cup \omega_2$ is contained in a simplex of $F(Y)$. Thus $\omega$ and $\omega'$ are homotopic in $F(Y)$. By repeating this process, we can homotop $\omega$ to a constant path, which finishes the proof.
\end{proof}

\begin{remark}
The simple connectedness of $F(Y)$ above be can alternatively deduced from \cite[Corollary~7.6]{bessis2006garside}, where Bessis considers a graph $\Gamma$ on $\G_{x\to}$ such that two vertices are adjacent if they differ by a simple morphism in $\C$. It is proved that the flag complex $F(\Gamma)$ is contractible. As $\Gamma$ is a proper subgraph of $Y$, it is not hard to deduce the simply-connectedness of $F(Y)$. The definition of $\Gamma$ goes back to work of Bestvina \cite{Bestvina1999} in the case of spherical Artin groups, and Charney-Meier-Whittlesey \cite{charney2004bestvina} in the case of Garside group. 
\end{remark}

For a braid group $G$, Theorem~\ref{t:Garside Helly} gives rise to many different Helly graphs (hence injective metric spaces) upon which $G$ acts geometrically. One can choose different Garside structures on $G$: either we use the interval $[1,\Delta^k]$ as a cell, or we use the dual Garside structure \cite{birman1998new,bessisdual}. One can also apply some combinatorial operations to a Helly graph --- e.g.\ taking the Rips complex or the face complex --- in order to obtain a new Helly graph (see \cite{Helly}).

\begin{cor}
	\label{cor:zonotopes}
Let $\mathcal{A}$ be a finite simplicial central arrangement of hyperplanes in a finite dimensional real vector space $V$. Let $\Sal(\mathcal{A})$ be its Salvetti complex and let $\widetilde{\Sal}(\mathcal{A})$ be the universal cover of $\Sal(\mathcal{A})$. Then $\widetilde{\Sal}(\mathcal{A})$ is cell Helly.
\end{cor}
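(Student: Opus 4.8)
The plan is to recognize $\widetilde{\Sal}(\mathcal{A})$ as the geometric realization of a Garside groupoid (or more precisely, to identify its cells with Garside-theoretic intervals) and then to deduce cell Helly directly from the work in Section~\ref{ss:Garside groups are Helly}, together with the local-to-global lemmas of Section~\ref{s:Helly}. First I would recall the classical fact, due to Deligne, that for a finite simplicial central arrangement $\mathcal{A}$, the fundamental group $\pi_1(M(\mathcal{A}))$ of the complement of the complexification is a Garside group (indeed, Deligne's work shows the associated monoid of positive galleries is a Garside monoid, with the lattice property coming from simpliciality of $\mathcal{A}$). The Salvetti complex $\Sal(\mathcal{A})$ is a $K(\pi,1)$ for this group whose universal cover $\widetilde{\Sal}(\mathcal{A})$ carries a natural cellulation: its cells are ``zonotopes'' indexed by faces of the arrangement, and these correspond exactly to the intervals $[f, f\Delta_{t(f)}]$ appearing in the definition of a cell in $\G_x$ just above. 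So the key identification step is a dictionary: vertices of $\widetilde{\Sal}(\mathcal{A})$ $\leftrightarrow$ elements of $\G_{x\to}$, and closed cells of $\widetilde{\Sal}(\mathcal{A})$ $\leftrightarrow$ Garside cells (intervals $[f,f\Delta_{t(f)}]$, or more generally $[f, f\delta]$ for $\delta$ a simple element corresponding to the relevant sub-arrangement).

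Once this dictionary is in place, I would verify the three conditions of Definition~\ref{d:cellhelly}. Condition (1), that intersections of cells are connected and simply connected, follows because each cell is an interval in the lattice $(\G_{x\to},\preccurlyeq)$ and, by Lemma~\ref{lem:lattice}, a nonempty intersection of intervals is again an interval, hence contractible (a single cell in the geometric realization); I also need that each closed cell is embedded and is a full subcomplex, which should come from the analogue of Lemma~\ref{lem:isometric} and Lemma~\ref{lem:full} in the Garside/zonotopal setting, or can be checked directly from the geometry of the Salvetti complex. Condition (2), the finite Helly property for the family of cells, is exactly Lemma~\ref{lem:lattice}(2) applied to the intervals. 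Condition (3), the triple-intersection ``filling'' condition, is precisely the content of Lemma~\ref{lem:spherical triple intersection}: for pairwise intersecting cells $C_1,C_2,C_3$ there is a cell $C$ containing $(C_1\cap C_2)\cup(C_2\cap C_3)\cup(C_3\cap C_1)$.

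The main obstacle I anticipate is the bookkeeping in the identification step, rather than any genuinely new geometric input. One must be careful that the ``cells'' of the zonotopal cellulation of $\widetilde{\Sal}(\mathcal{A})$ really are intervals of the form $[f,f\Delta_{t(f)}]$ for the appropriate notion of $\Delta$ attached to each face of the arrangement, and that the lattice structure used in Lemma~\ref{lem:spherical triple intersection} (which works inside a single $\G_{x\to}$) is the same lattice structure governing how zonotopal cells sit inside $\widetilde{\Sal}(\mathcal{A})$. Deligne's theorem gives the lattice property for the monoid of positive paths in the Cayley graph of the arrangement group, and simpliciality of $\mathcal{A}$ is exactly what is needed for all the relevant local intervals to be lattices, so this should go through. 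A secondary point is ensuring the family $\{X_i\}$ of all closed cells is preserved by the deck action and that $\widetilde{\Sal}(\mathcal{A})$ is simply connected (immediate, it is a universal cover), so that Proposition~\ref{p:Helly_action} can later be invoked; but for the statement at hand only cell Helly is claimed, so the proof reduces to citing Deligne \cite{Deligne} for the Garside structure and then quoting Lemma~\ref{lem:lattice} and Lemma~\ref{lem:spherical triple intersection} to verify Definition~\ref{d:cellhelly}(1)--(3).
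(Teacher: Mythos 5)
Your proposal follows essentially the same route as the paper's proof: invoke Deligne for the Garside structure on $G=\pi_1(M(\mathcal{A}))$, identify vertices of $\widetilde{\Sal}(\mathcal{A})$ with $\G_{x\to}$ and top-dimensional cells with the Garside intervals $[f,f\Delta_{t(f)}]$ (lower cells with intervals), and verify Definition~\ref{d:cellhelly} using Lemma~\ref{lem:lattice}, fullness of closed cells (as in Lemma~\ref{lem:full}), and Lemma~\ref{lem:spherical triple intersection}. Two minor inaccuracies worth fixing: Deligne's structure is a Garside groupoid/category, so $G$ is a \emph{weak} Garside group rather than a Garside group with one object, and an intersection of closed cells is in general only a full subcomplex spanned by an interval (contractible, in the spirit of Lemma~\ref{lem:contractible}), not a single cell of $\widetilde{\Sal}(\mathcal{A})$.
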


\begin{proof}
We refer to \cite{Salvetti,ParisSalvetti} for relevant definitions and background. Let $M(\mathcal{A})=V_{\mathbb C}-\left(\bigcup_{H\in\mathcal{A}} H_{\mathbb C}\right) $ and $G=\pi_1 (M(\mathcal{A}))$. Then $G$ is a weak Garside group \cite{Deligne}. $G$ can be identified as vertices of $\widetilde{\Sal}(\mathcal{A})$, and a cell of $G$ in the sense of Lemma~\ref{lem:spherical triple intersection} can be identified with the vertex set of a top dimensional cell in $\widetilde{\Sal}(\mathcal{A})$, moreover, the vertex set of each cell of $\widetilde{\Sal}(\mathcal{A})$ can be identified with an interval in $G$. Thus Definition~\ref{d:cellHelly} (2) holds. Similarly to Lemma~\ref{lem:full}, one can prove that any closed cell in $\widetilde{\Sal}(\mathcal{A})$ is a full subcomplex. Hence any intersection of cells is a full subcomplex spanned by vertices in an interval of one cell. Hence Definition~\ref{d:cellHelly} (3) holds. Moreover, the fact that cells are full together with Lemma~\ref{lem:spherical triple intersection} imply that the collection of top dimensional cells in $\widetilde{\Sal}(\mathcal{A})$ satisfies Definition~\ref{d:cellHelly} (3). Therefore, the same holds for the collection of cells in $\widetilde{\Sal}(\mathcal{A})$ as every cell is contained in a top dimensional cell.
\end{proof}


\section{Artin groups and Helly graphs}
\label{s:ArtinH}
\subsection{Helly property for cells in $X_\Gamma$}
Recall that $X^{(0)}_\Gamma$ is endowed with the prefix order $\preccurlyeq$. For a cell $C\in X_\Gamma$, we use $\preccurlyeq_C$ to denote the right weak order on $C^{(0)}$ (we view $C$ as an oriented Coxeter cell). And for a standard subcomplex $X_1\subset X_\Gamma$ of type $\Gamma_1$, we use $\preccurlyeq_{\Gamma_1}$ to denote the prefix order on $X^{(0)}_1$ coming from $A_{\Gamma_1}$.

\begin{lemma}
	\label{lem:interval}
	Let $C_1,C_2\subset X_\Gamma$ be two cells with non-empty intersection. Then the vertex set of $C_1\cap C_2$ is an interval in $(X^{(0)}_\Gamma,\preccurlyeq)$. 
\end{lemma}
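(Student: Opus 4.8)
The plan is to reduce the claim to the case of two cells sitting inside a single \emph{spherical} standard subcomplex, whose vertex set carries a lattice order, and then to invoke Lemma~\ref{lem:lattice}. The only fact I need about a single cell is that its vertex set is an interval of $(X^{(0)}_\Gamma,\preccurlyeq)$, with extremities the source and sink of the cell: this is Lemma~\ref{lem:isometric}(2) applied with $x$ the source and $y$ the sink (in an oriented Coxeter cell every vertex lies between source and sink in the weak order). So $C_1^{(0)}$ and $C_2^{(0)}$ are both intervals of $(X^{(0)}_\Gamma,\preccurlyeq)$; but this poset is a lattice only when $\Gamma$ is spherical (Theorem~\ref{thm:lattice}), so Lemma~\ref{lem:lattice} does not apply directly. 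The main --- and essentially only --- point is thus to locate a lattice containing both cells and their intersection; the rest is bookkeeping with Lemma~\ref{lem:intersection of cell and subcomplex}.

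For $i=1,2$ I would let $\Gamma_i$ be the type of $C_i$ and $X_i$ the standard subcomplex of type $\Gamma_i$ containing it. Since every cell of $X_\Gamma$ is a lift of an oriented Coxeter cell attached along a spherical subgraph, $\Gamma_i$ is spherical, hence so is the full subgraph $\Gamma_0:=\Gamma_1\cap\Gamma_2$. Because $C_1\cap C_2\neq\emptyset$ we have $X_1\cap X_2\neq\emptyset$, so by Lemma~\ref{lem:intersection of cell and subcomplex}(1) the intersection $X_0:=X_1\cap X_2$ is a standard subcomplex of type $\Gamma_0$. Identifying $X_0^{(0)}$ with $A_{\Gamma_0}$: by Lemma~\ref{lem:intersection of cell and subcomplex}(4) its prefix order $\preccurlyeq_{\Gamma_0}$ is the restriction of $\preccurlyeq$, and by Theorem~\ref{thm:lattice}(2) the poset $(X_0^{(0)},\preccurlyeq_{\Gamma_0})$ is a lattice.

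To finish, I would note that $C_1\cap C_2\subseteq X_1\cap X_2=X_0$, so each $D_i:=C_i\cap X_0$ contains $C_1\cap C_2$ and is therefore non-empty; by Lemma~\ref{lem:intersection of cell and subcomplex}(2) it is a cell of type $\Gamma_i\cap\Gamma_0=\Gamma_0$ lying in $X_0$. By the first paragraph together with Lemma~\ref{lem:intersection of cell and subcomplex}(3)--(4), the vertex set of $D_i$ is an interval $[a_i,b_i]$ of the lattice $(X_0^{(0)},\preccurlyeq_{\Gamma_0})$. Since $C_1\cap C_2\subseteq X_0$ we have $C_1\cap C_2=D_1\cap D_2$, so its vertex set is $[a_1,b_1]\cap[a_2,b_2]$, which is non-empty by hypothesis; Lemma~\ref{lem:lattice}(1) then gives $[a_1,b_1]\cap[a_2,b_2]=[a_1\vee a_2,\,b_1\wedge b_2]$, an interval of $(X_0^{(0)},\preccurlyeq_{\Gamma_0})$, hence --- once more by Lemma~\ref{lem:intersection of cell and subcomplex}(3)--(4) --- an interval of $(X^{(0)}_\Gamma,\preccurlyeq)$, as desired. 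I do not expect any genuine difficulty beyond the non-lattice issue handled in the reduction above; the one thing to be careful about is keeping the various types, orders, and intervals compatible when passing between $X_\Gamma$ and $X_0$.
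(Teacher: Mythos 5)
Your proof is correct and follows essentially the same route as the paper: pass to the standard subcomplex $X_0=X_1\cap X_2$ of type $\Gamma_0=\Gamma_1\cap\Gamma_2$, replace $C_i$ by the cells $D_i=C_i\cap X_0$, use Lemma~\ref{lem:isometric}(2) to see their vertex sets are intervals in the lattice $(X_0^{(0)},\preccurlyeq_{\Gamma_0})$, and conclude with Lemma~\ref{lem:lattice}(1) and Lemma~\ref{lem:intersection of cell and subcomplex}(3)--(4). No gaps.
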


\begin{proof}
	For $i=1,2$, suppose $C_i$ is of type $\Gamma_i$ and let $X_i$ be the standard subcomplex of type $\Gamma_i$ containing $C_i$. Let $X_0=X_1\cap X_2$, which is a standard subcomplex of type $\Gamma_0=\Gamma_1\cap \Gamma_2$. Let $D_i=C_i\cap X_0$. Then $D_i$ is a cell by Lemma~\ref{lem:intersection of cell and subcomplex}. Moreover, $D^{(0)}_1$ and $D^{(0)}_2$ are intervals in $(X^{(0)}_0,\preccurlyeq_{\Gamma_0})$ by Lemma~\ref{lem:isometric}. As $(X^{(0)}_0,\preccurlyeq_{\Gamma_0})$ is a lattice (Theorem~\ref{thm:lattice}), $(C_1\cap C_2)^{(0)}=(D_1\cap D_2)^{(0)}$ is an interval in $(X^{(0)}_0,\preccurlyeq_{\Gamma_0})$ (Lemma~\ref{lem:lattice}), hence it is also an interval in $(X^{(0)}_\Gamma,\preccurlyeq)$ by Lemma~\ref{lem:intersection of cell and subcomplex} (3) and (4). 
\end{proof}

\begin{lemma}
	\label{lem:contractible}
	Let $C$ be an oriented Coxeter cell. Let $K$ be the full subcomplex of $C$ spanned by vertices in an interval of $C^{(0)}$. Then $K$ is contractible.
\end{lemma}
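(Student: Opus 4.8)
The plan is to prove that an interval subcomplex $K$ of an oriented Coxeter cell $C$ is contractible by exhibiting an explicit deformation retraction onto one of its ``tips'', using the lattice structure on $C^{(0)}$ (i.e.\ the right weak order on the corresponding finite Coxeter group) together with the convexity of faces of $C$ recorded in Theorem~\ref{thm:Coxeter convexity}. Write $K = [a,b]$ for the interval in $(C^{(0)}, \preccurlyeq_C)$ spanned by $K$, where $a \preccurlyeq_C b$ are the minimal and maximal elements of the interval (these exist since $C^{(0)}$ is a lattice, by Theorem~\ref{thm:lattice}, and an interval in a lattice is itself a bounded lattice). The key geometric point is that every face of $C$ is, with its induced orientation and labeling, again an oriented Coxeter cell (this is exactly the consequence of Theorem~\ref{thm:Coxeter convexity} noted right after it), and that a face $F$ of $C$ satisfies $F \subseteq K$ if and only if the source and sink of $F$ both lie in $[a,b]$ — equivalently, using Theorem~\ref{thm:Coxeter convexity}(1), $F^{(0)} \subseteq [a,b]$, which is precisely the definition of $K$ being full.

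First I would set up the combinatorics: the cells of $C$ correspond to pairs $(F_{\mathrm{src}}, F_{\mathrm{sink}})$ of comparable vertices of $C$ (a face is determined by its source and sink, cf.\ Lemma~\ref{lem:same source}), and a face $F = [u,v]_{\preccurlyeq_C}$ lies in $K$ iff $a \preccurlyeq_C u$ and $v \preccurlyeq_C b$. Then I would build the retraction in two stages. Stage one: retract $K$ onto the ``upper'' subcomplex $\mathrm{star}$-like piece consisting of all faces containing the sink $b$ in the following sense — push everything toward $b$. Concretely, define on vertices the map $x \mapsto x \vee_C b$? That is not right since $b$ is already the top. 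Instead, the cleanest route is: show $K$ is a union of faces each containing the vertex $a$, no — that also fails. The correct and standard move here is to retract toward a tip by collapsing across the ``gallery'' structure; I would instead invoke the interval structure directly. Since $[a,b]$ is itself the vertex set of the oriented Coxeter cell which is the face $F_0$ of $C$ with source $a$ and sink $b$ (by Theorem~\ref{thm:Coxeter convexity}, applied to the coset structure, $[a,b]_{\preccurlyeq_C}$ corresponds to a sub-Coxeter-cell), we in fact have $K = F_0$ as a subcomplex, which is a Coxeter cell and hence homeomorphic to a closed ball — so $K$ is contractible.

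Let me reconsider which step is the main obstacle: it is precisely the claim that the full subcomplex spanned by an interval $[a,b]_{\preccurlyeq_C}$ in a Coxeter cell $C$ equals a face of $C$ (a lower-dimensional Coxeter cell). This is where Theorem~\ref{thm:Coxeter convexity} does the work: the interval $[a,b]$ in the right weak order on $W_\Gamma$ is, after translating by $a^{-1}$, the interval $[e, a^{-1}b]$, and such an interval is the vertex set of the subcomplex spanned by a parabolic coset only when $a^{-1}b$ is the longest element of some standard parabolic — which is generally \emph{not} the case. So the honest proof must be the deformation-retraction argument after all. The plan I would actually carry out: induct on $\dim C$; if $K = C$ we are done since $C$ is a ball; otherwise pick a vertex $c$ of $C$ not in $[a,b]$, consider the facet $F$ of $C$ not containing $c$ (or the family of facets), and use a straight-line homotopy in the Euclidean model $\mathbb{E}^n$ pushing $K$ into $F \cap K$; by Theorem~\ref{thm:Coxeter convexity} the intersection $F \cap K$ is again the full subcomplex of the Coxeter cell $F$ spanned by an interval of $F^{(0)}$ (the interval $[a \vee_C (\text{source of }F), b \wedge_C (\text{sink of }F)]$, nonempty and well-defined by the lattice property), so the inductive hypothesis applies. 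The main obstacle is verifying that this Euclidean radial/linear homotopy (a) stays inside $K$ throughout — which uses that $K$ is an intersection of half-spaces of the ambient polytope $C$ dual to faces disjoint from the relevant facet, hence convex in $\mathbb{E}^n$ — and (b) is cellular, i.e.\ carries subcomplexes to subcomplexes, for which one again appeals to the fact that faces of $C$ are Coxeter cells and the interval condition is preserved under intersection with a facet by Theorem~\ref{thm:Coxeter convexity}(1).
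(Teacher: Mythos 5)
Your final plan hinges on the claim that $K$ is ``an intersection of half-spaces of the ambient polytope $C$ \dots hence convex in $\mathbb{E}^n$'', and that claim is false; this is exactly where the proposed straight-line homotopy breaks down. What is true is that the vertices of an interval $[a,b]_{\preccurlyeq_C}$ are precisely the vertices of $C$ lying in a region $R=C\cap\bigcap_\alpha H_\alpha^{\pm}$ cut out by reflection hyperplanes, but $K$ is only the union of the closed faces of $C$ contained in $R$, and this union is in general not convex. Already for the hexagon (the Coxeter cell of the dihedral group of order $6$) and the interval $[e,st]=\{e,s,st\}$, the subcomplex $K$ is a bent path of two edges: a straight-line (or radial) homotopy pushing it onto a facet sweeps through the interior of the triangle it spans, which lies in $C\setminus K$. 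So step (a) of your induction --- that the homotopy stays inside $K$ --- fails with the justification given, and no alternative mechanism for the retraction is supplied; the cellularity claim (b) is likewise unverified. (Your earlier idea that $K$ is a single face of $C$ you correctly discarded yourself; the part of your argument that does work is the bookkeeping that $F\cap K$ is again the full subcomplex of the facet $F$ spanned by an interval, since a facet is itself a weak-order interval and intervals in a lattice intersect in intervals.)

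For comparison, the paper avoids any convexity of $K$ in the Euclidean model. It first reduces to the case where the bottom of the interval is the source vertex of $C$ (via \cite[Proposition 3.1.6]{bjorner2006combinatorics}), then runs Bestvina--Brady Morse theory: the word-length function $\ell$ (combinatorial distance from the source), extended affinely over each cell, is a Morse function on $K$, and the descending link of every vertex of $K$ is a simplex --- this uses the fact from \cite[Chapter IV, Exercise 22]{bourbaki2002lie} that any set of descents of an element spans a descending face with that element as sink, together with fullness of $K$ to see these faces lie in $K$. Contractibility then follows from \cite[Lemma 2.5]{MR1465330}. Some argument of this combinatorial type (Morse theory, collapsibility, or shellability of weak-order intervals) is needed; a purely Euclidean convexity argument does not apply to $K$ as defined.
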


\begin{proof}
Let $[x,y]$ be an interval of $C^{(0)}$. By \cite[Proposition 3.1.6]{bjorner2006combinatorics}, we can assume $x$ is the source of $C$. Let $\ell:C^{(0)}\to \mathbb Z$ be the function measuring the combinatorial distance from $x$ to a given point in $C^{(0)}$. We extend $\ell$ to $C\to \mathbb R$ such that $\ell$ is affine when restricted to each cell. Note that $\ell$ is non-constant on each cell with dimension $>0$. Then $\ell|_K$ is a Morse function in the sense of \cite[Definition 2.2]{MR1465330}. Moreover, it follows from \cite[Chapter IV, Exercise 22]{bourbaki2002lie} that the descending link (\cite[Definition 2.4]{MR1465330}) of each vertex for $\ell|_K$ is a simplex, hence contractible. Thus $K$ is contractible by \cite[Lemma 2.5]{MR1465330}.
\end{proof}

\begin{lemma}
	\label{lem:triple intersection1}
	Suppose $\Gamma$ is spherical. Let $C_1$ and $C_2$ be two cells in $X_\Gamma$, and let $X_0$ be a standard subcomplex of $X_\Gamma$ of type $\Gamma_0$. Suppose that $C_1,C_2$, and $X_0$ pairwise intersect. Then their triple intersection is nonempty.
\end{lemma}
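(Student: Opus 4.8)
\textbf{Proof plan for Lemma~\ref{lem:triple intersection1}.}

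The plan is to reduce the statement to the Garside lattice situation handled in Section~\ref{s:Garside}, where triple intersections of cells in a Garside group are understood via Lemma~\ref{lem:spherical triple intersection}, together with the fact that $A_\Gamma$ is a Garside group when $\Gamma$ is spherical (Theorem~\ref{thm:lattice}(2)). The first step is to pass from the pair $\{C_1,C_2\}$ to the subcomplex $X_0$: for $i=1,2$, let $C_i$ have type $\Gamma_i$, let $Y_i$ be the standard subcomplex of type $\Gamma_i$ containing $C_i$, and set $D_i = C_i\cap X_0$. By Lemma~\ref{lem:intersection of cell and subcomplex}(2), each intersection $C_i\cap X_0$ is a cell, namely $D_i$, of type $\Gamma_i\cap\Gamma_0$; in particular $D_i$ is a (lower-dimensional) oriented Coxeter cell sitting inside $X_0 \cong X_{\Gamma_0}$, with the edge orientation and labeling inherited from $C_i$. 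Since $C_1\cap C_2 \neq \emptyset$, $C_1\cap X_0 \neq\emptyset$, and $C_2\cap X_0\neq\emptyset$, the triple intersection $C_1\cap C_2\cap X_0$ equals $D_1\cap D_2$, so it suffices to show that the two cells $D_1,D_2$ inside $X_0$ have nonempty intersection, i.e.\ we have reduced to the case where the ``third'' subcomplex is all of $X_\Gamma$ and $\Gamma$ is spherical.

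The second step treats this reduced case. Now $\Gamma_0$ is spherical, so $A_{\Gamma_0}$ is a Garside group with Garside element $\Delta = \Delta_{\Gamma_0}$, and $(A_{\Gamma_0},\preccurlyeq)$ is a lattice (Theorem~\ref{thm:lattice}(2)). By Lemma~\ref{lem:isometric}(2), the vertex set of each cell $D_i$ of $X_0$ is an interval in $(X^{(0)}_0, \preccurlyeq_{\Gamma_0})$, and by Lemma~\ref{lem:intersection of cell and subcomplex}(3)--(4) this order agrees with the restriction of $(A_\Gamma,\preccurlyeq)$. So $D^{(0)}_1 = [f_1, g_1]$ and $D^{(0)}_2 = [f_2, g_2]$ are intervals in the lattice $A_{\Gamma_0}$. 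The crucial point is that $D_i^{(0)}$ is not merely an interval but an interval of the special form $[f_i, f_i\Delta_i]$ where $\Delta_i$ is the half-twist of a spherical parabolic of $A_{\Gamma_0}$ — in other words $D_i$ is, up to left translation, the image of an oriented Coxeter cell; we should record this via Lemma~\ref{lem:isometric}(2) (the ``moreover'' clause identifies $[q(x),q(y)]_\preccurlyeq$ with the image of $[a,b]_{\preccurlyeq_C}$). One then either invokes Lemma~\ref{lem:spherical triple intersection} directly (with $C_3$ taken to be a cell containing the relevant data, e.g.\ using the third ``cell'' to be a large enough cell of $X_0$ or simply running the same prefix/suffix computation for two cells), or — more cleanly — one notes that for two cells it is enough to show $[f_1,g_1]\cap[f_2,g_2]\neq\emptyset$, which by Lemma~\ref{lem:lattice}(1) holds iff $f_1\vee f_2 \preccurlyeq g_1\wedge g_2$, and this can be verified by the kind of Garside-theoretic manipulation of simple elements in Lemmas~\ref{lem:order}--\ref{lem:simple element2}: writing $h$ for a common vertex of $C_1\cap C_2$, which lies in $X_0$ and hence in $D_1\cap D_2$, the intersection is automatically nonempty. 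Indeed this last observation shortcuts everything: any vertex of $C_1\cap C_2$ is a vertex of $X_0$ because $D_i = C_i\cap X_0$ and a vertex of $C_i$ lying in $C_1\cap C_2$ lies in both...

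Let me restructure: the honest content is that $C_1\cap C_2 \subset X_0$. This is the main obstacle and is where the hypothesis ``$\Gamma$ spherical'' and convexity enter. To see it, take any vertex $v\in C_1\cap C_2$; we want $v\in X_0 = Y_1\cap Y_2$. Since $C_1\cap X_0 = D_1\neq\emptyset$, pick $w\in D_1^{(0)}$. Both $v,w\in C_1^{(0)}$, and $C_1^{(0)}$, being the vertex set of an oriented Coxeter cell, is an interval of the ambient Garside lattice on $X_0\cup\cdots$; here I would instead argue with the standard subcomplex $Y_1$: $v\in C_1\subseteq Y_1$ and also $v\in C_2\subseteq Y_2$, so $v\in Y_1\cap Y_2 = X_0$ by Lemma~\ref{lem:intersection of cell and subcomplex}(1). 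Hence every vertex of $C_1\cap C_2$ lies in $X_0$; since $C_1\cap C_2$ is a subcomplex (an interval-spanned full subcomplex by Lemma~\ref{lem:interval} and Lemma~\ref{lem:full}) and $X_0$ is full (Lemma~\ref{lem:standard subcomplex is full}), we get $C_1\cap C_2 \subseteq X_0$, so $C_1\cap C_2\cap X_0 = C_1\cap C_2\neq\emptyset$ by hypothesis. The real work — and thus the main obstacle — is simply making sure the types line up so that $Y_1\cap Y_2$ is exactly the standard subcomplex $X_0$ of type $\Gamma_0$ rather than something larger, and keeping track of which spherical hypotheses are actually used (only $\Gamma$ spherical guarantees $C_1\cap C_2$ is an interval via the lattice property, via Lemma~\ref{lem:interval}); with that in hand the lemma is immediate.
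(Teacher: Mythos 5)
Your final argument contains a genuine gap: the whole proof rests on the claim that $Y_1\cap Y_2=X_0$, so that any vertex $v\in C_1\cap C_2$ lies in $X_0$ and hence $C_1\cap C_2\subseteq X_0$. Nothing in the hypotheses relates $\Gamma_0$ to $\Gamma_1$ and $\Gamma_2$: $X_0$ is an \emph{arbitrary} standard subcomplex of type $\Gamma_0$ meeting $C_1$ and $C_2$, while $Y_1\cap Y_2$ is a standard subcomplex of type $\Gamma_1\cap\Gamma_2$, and even when the types happen to coincide there may be many parallel cosets of that type, so there is no reason $X_0$ is the particular one containing $C_1\cap C_2$. Concretely, take $\Gamma$ spherical on two generators $s,t$, let $C_1=C_2$ be the top-dimensional Coxeter cell with source the identity (type $\Gamma$, so $Y_1=Y_2=X_\Gamma$), and let $X_0$ be the standard subcomplex of type $\{s\}$ through the vertex $t$, i.e.\ the line $t\langle s\rangle$. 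The triple pairwise intersects, but $C_1\cap C_2=C_1\not\subseteq X_0$, so your inclusion fails; the lemma's conclusion still holds here, but not by your argument. Your earlier reduction to $D_i=C_i\cap X_0$ (correct, via Lemma~\ref{lem:intersection of cell and subcomplex}(2), and indeed $C_1\cap C_2\cap X_0=D_1\cap D_2$) is fine but incomplete: two nonempty cells of $X_0$ need not meet, and you never convert the hypothesis $C_1\cap C_2\neq\emptyset$ into information about $D_1\cap D_2$ --- the sentence ``a common vertex of $C_1\cap C_2$ lies in $X_0$'' begs exactly this question, and the hinted Garside computation of $f_1\vee f_2\preccurlyeq g_1\wedge g_2$ is never carried out.

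The paper closes this gap differently and quite quickly: pick $v_i\in C_i\cap X_0$, use the lattice property of $(X_0^{(0)},\preccurlyeq_{\Gamma_0})$ (Theorem~\ref{thm:lattice}) to find $a\preccurlyeq_{\Gamma_0}v_i\preccurlyeq_{\Gamma_0}b$, and let $I$ be the interval $[a,b]$ in $(X_\Gamma^{(0)},\preccurlyeq)$; then $I\subseteq X_0^{(0)}$ by Lemma~\ref{lem:intersection of cell and subcomplex}(3). Now $C_1^{(0)}$, $C_2^{(0)}$ and $I$ are three pairwise-intersecting intervals in the lattice $(X_\Gamma^{(0)},\preccurlyeq)$ (intervals by Lemma~\ref{lem:isometric}(2); the lattice property is where ``$\Gamma$ spherical'' is used), so Lemma~\ref{lem:lattice}(2) gives $C_1\cap C_2\cap I\neq\emptyset$, hence $C_1\cap C_2\cap X_0\neq\emptyset$. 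You would need an argument of this kind --- producing a suitable interval inside $X_0$ and invoking the Helly property of intervals in a lattice --- rather than the containment $C_1\cap C_2\subseteq X_0$.
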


\begin{proof}
	For $i=1,2$, let $v_i$ be a vertex of $C_i\cap X_0$. Since $(X^{(0)},\preccurlyeq_{\Gamma_0})$ is a lattice by Theorem~\ref{thm:lattice}, we can find $a,b\in X^{(0)}$ such that $a\preccurlyeq_{\Gamma_0} v_i\preccurlyeq_{\Gamma_0} b$ for $i=1,2$. Let $I$ be the interval between $a$ and $b$ with respect to the prefix order on $X^{(0)}_\Gamma$. Then $I\subset X_0$ by Lemma~\ref{lem:intersection of cell and subcomplex} (3). Note that $C_1,C_2$ and $I$ pairwise intersect by construction. Since $C^{(0)}_i$ is an interval in $(X^{(0)}_\Gamma,\preccurlyeq)$ for $i=1,2$ (Lemma~\ref{lem:isometric} (2)), and $(X^{(0)}_\Gamma,\preccurlyeq)$ is a lattice (recall that $\Gamma$ is spherical), we have $C_1\cap C_2\cap I\neq\emptyset$ by Lemma~\ref{lem:lattice} (2). Since $I\subset X_0$, the lemma follows.
\end{proof}

\begin{lemma}
	\label{lem:triple intersection2}
	Let $C_1,C_2,C_3$ be three cells in $X_\Gamma$. Suppose they pairwise intersect. Then $C_1\cap C_2\cap C_3\neq\emptyset$.
\end{lemma}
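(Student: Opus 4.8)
The statement to be proved is Lemma~\ref{lem:triple intersection2}: three pairwise intersecting cells $C_1, C_2, C_3$ in $X_\Gamma$ have nonempty common intersection. The natural strategy is to reduce to the spherical case already handled in Lemma~\ref{lem:triple intersection1}, by passing to a standard subcomplex of spherical type that contains (the relevant parts of) all three cells. First I would record the types: let $C_i$ have type $\Gamma_i$, and let $X_i$ be the standard subcomplex of that type containing $C_i$. Since the $C_i$ pairwise intersect, the $X_i$ pairwise intersect, so by Lemma~\ref{lem:intersection of cell and subcomplex}(1) each $X_i \cap X_j$ is a standard subcomplex of type $\Gamma_i \cap \Gamma_j$, and $\Gamma_i \cap \Gamma_j$ is spherical because it is a subgraph of the spherical graph $\Gamma_i$ (each $\Gamma_i$ is spherical, being the type of a cell).

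The key point is to locate a single standard subcomplex $X_0$ of spherical type inside which all three intersection problems live. I would take $X_0$ to be a standard subcomplex containing $C_1 \cap C_3$ and $C_2 \cap C_3$ — for instance, work inside $X_3$, or inside a standard subcomplex of type $\Gamma_1 \cap \Gamma_3$ together with one of type $\Gamma_2 \cap \Gamma_3$ amalgamated appropriately. More carefully: pick a vertex $p \in C_1 \cap C_3$ and a vertex $q \in C_2 \cap C_3$; both lie in $C_3$, which has spherical type $\Gamma_3$, so inside the lattice $(X_3^{(0)}, \preccurlyeq_{\Gamma_3})$ (Theorem~\ref{thm:lattice}) there is an interval $I$ containing both $p$ and $q$, and by Lemma~\ref{lem:intersection of cell and subcomplex}(3) this interval $I$ is contained in $X_3$. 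Now $C_1, C_2$, and $I$ pairwise intersect: $C_1 \cap I \ni p$, $C_2 \cap I \ni q$ (after possibly enlarging $I$ to contain $p,q$ — it does by construction), and $C_1 \cap C_2 \neq \emptyset$ by hypothesis.

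At this stage I would invoke Lemma~\ref{lem:triple intersection1} with the ambient graph $\Gamma$ replaced by $\Gamma_3$ (which is spherical), the two cells $C_1 \cap X_3$ and $C_2 \cap X_3$ (these are cells by Lemma~\ref{lem:intersection of cell and subcomplex}(2)), and the ``standard subcomplex'' being $I$ — or rather, treat $I$ as sitting inside $X_3 \cong X_{\Gamma_3}$ and apply the spherical triple-intersection lemma there. Actually the cleanest route: since $C_1 \cap X_3$, $C_2 \cap X_3$ are cells in the spherical complex $X_{\Gamma_3}$ with vertex sets being intervals in the lattice $(X_3^{(0)}, \preccurlyeq_{\Gamma_3})$ (Lemma~\ref{lem:isometric}(2)), and $I$ is also an interval, three pairwise-intersecting intervals in a lattice have a common point by Lemma~\ref{lem:lattice}(2). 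This gives a vertex in $C_1 \cap C_2 \cap I \subseteq C_1 \cap C_2 \cap C_3$, completing the proof.

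\textbf{Main obstacle.} The delicate step is verifying that the various intersections $C_i \cap X_j$ really are cells (not just subcomplexes) and that their vertex sets are intervals in the appropriate lattice, so that Lemma~\ref{lem:lattice}(2) applies — this is where Lemmas~\ref{lem:intersection of cell and subcomplex} and \ref{lem:isometric} do the heavy lifting, and one must be careful that the prefix orders $\preccurlyeq$, $\preccurlyeq_{\Gamma_i}$, and the right weak orders $\preccurlyeq_{C_i}$ are all compatible (Lemma~\ref{lem:intersection of cell and subcomplex}(3),(4) and Lemma~\ref{lem:isometric}(2)). The other subtlety is making sure the spherical hypothesis is genuinely available: it holds because the type $\Gamma_i$ of any cell is automatically spherical (a cell is a Coxeter cell, so its type has finite Coxeter group), hence so is every $\Gamma_i \cap \Gamma_j$ — but this should be stated explicitly rather than assumed. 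I do not expect to need the full FC hypothesis here beyond what guarantees that cells have spherical type; the global FC condition enters elsewhere (in assembling these into a cell Helly structure).
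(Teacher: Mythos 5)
Your reduction works only in the easy cases and breaks down precisely where the real difficulty lies. The step that fails is the claim that $C_1\cap X_3$, $C_2\cap X_3$ and the interval $I$ pairwise intersect inside $X_3$. For the lattice argument (Lemma~\ref{lem:lattice}(2) applied in $(X_3^{(0)},\preccurlyeq_{\Gamma_3})$, or any invocation of Lemma~\ref{lem:triple intersection1} with ambient type $\Gamma_3$) you need all three sets to meet pairwise \emph{as subsets of $X_3$}; in particular you need $(C_1\cap X_3)\cap(C_2\cap X_3)=C_1\cap C_2\cap X_3\neq\emptyset$. The hypothesis only gives $C_1\cap C_2\neq\emptyset$ somewhere in $X_\Gamma$, and there is no reason a point of $C_1\cap C_2$ should lie in $X_3$ when the three types are distinct. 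You cannot fix this by working ambiently either: the vertex sets of $C_1$ and $C_2$ are intervals in $(X_\Gamma^{(0)},\preccurlyeq)$, but that poset is a lattice only when $\Gamma$ is spherical (Theorem~\ref{thm:lattice}), and the lemma carries no spherical or even FC hypothesis. So the missing pairwise intersection is essentially a statement of the same strength as the lemma itself (a triple intersection of two cells and a standard subcomplex in a non-spherical ambient complex), and asserting it from the ambient hypothesis is a genuine gap.

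Your argument does cover the cases the paper treats as Cases 1 and 2 (all three types equal, or two equal), since there the relevant cells already sit in a common spherical standard subcomplex and the lattice/interval argument applies verbatim; and you are right that no FC assumption is needed and that cell types are automatically spherical. But in the main case (three distinct types) the paper has to do real work to produce the intersections you take for granted: it projects to the Davis complex via $\pi\colon X_\Gamma\to D_\Gamma$ and uses the Helly property for cosets of standard parabolic subgroups of the Coxeter group (Lemma~\ref{lem:Coxeter helly}) to show that $\tau_i=(C_i\cap X_{i+1})\cap(C_i\cap X_{i+2})$ is a nonempty cell of type $\Gamma_0=\Gamma_1\cap\Gamma_2\cap\Gamma_3$, and then applies Lemma~\ref{lem:triple intersection1} inside the spherical standard subcomplex $X_1\cap X_2$ (where the needed pairwise intersections do hold, e.g.\ $(C_1\cap X_2)\cap(C_2\cap X_1)\supseteq C_1\cap C_2$) before finishing with the lattice argument in $(X_0^{(0)},\preccurlyeq_{\Gamma_0})$. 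Some substitute for this Coxeter-group input is needed; your proposal has none.
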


\begin{proof}
	Let $\Gamma_i$ be the type of $C_i$ and let $X_i$ be the standard subcomplex of type $\Gamma_i$ containing $C_i$. Let $\Gamma_0=\cap_{i=1}^3\Gamma_i$.
	
	\emph{Case 1: $\Gamma_1=\Gamma_2=\Gamma_3$.} Then $X_1=X_2=X_3$. Since each $C^{(0)}_i$ is an interval in $(X^{(0)}_1,\preccurlyeq_{\Gamma_1})$ (Lemma~\ref{lem:isometric}) and $(X^{(0)}_1,\preccurlyeq_{\Gamma_1})$ is a lattice (Theorem~\ref{thm:lattice}), the lemma follows from Lemma~\ref{lem:lattice}.
	
	\emph{Case 2: only two of the $\Gamma_i$ are equal}. We suppose without loss of generality that $\Gamma_1=\Gamma_2\neq\Gamma_3$. Let $C'_3=C_3\cap X_1$. Then $C'_3$ is a cell by Lemma~\ref{lem:intersection of cell and subcomplex}. Moreover, $C_1$, $C_2$ and $C'_3$ pairwise intersect. As the previous case, we know $C_1\cap C_2\cap C'_3\neq\emptyset$, hence the lemma follows.
	
	\emph{Case 3: the $\Gamma_i$ are pairwise distinct.} For $1\le i\neq j\le 3$, let $C_{ij}=C_i\cap X_j$ (see Figure~\ref{f:triple}). We claim $C_{12}\cap C_{13}$ is a cell of type $\Gamma_0$ (when $\Gamma_0=\emptyset$, the intersection is a vertex). Consider the cellular map $\pi:X_\Gamma\to D_\Gamma$ defined just before Lemma~\ref{lem:isometric}. Let $\bar C_i$ be the image of $C_i$. Note that $C_{12}$ is a cell of type $\Gamma_1\cap\Gamma_2$ (Lemma~\ref{lem:intersection of cell and subcomplex}), and $\bar C_1\cap \bar C_2$ is a cell of type $\Gamma_1\cap\Gamma_2$ (Theorem~\ref{thm:lek}), hence $\pi(C_{12})=\bar C_1\cap \bar C_2$. Similarly, $\pi(C_{13})=\bar C_1\cap \bar C_3$. Since $\{\bar C_i\}_{i=1}^3$ pairwise intersect, their triple intersection is nonempty by Lemma~\ref{lem:Coxeter helly}. Thus $\pi(C_{12})\cap \pi(C_{13})\neq\emptyset$. Hence $\pi(C_{12})\cap \pi(C_{13})$ is a cell of type $\Gamma_0$. Now, the claim follows from the fact that $\pi$ restricted to $C_1$ is an embedding.
	
	\begin{figure}[h!]
		\centering
		\includegraphics[width=0.38\textwidth]{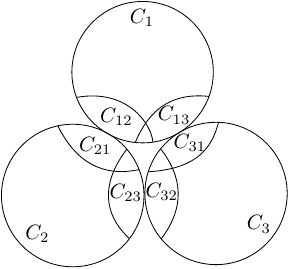}
		\caption{}
		\label{f:triple}
	\end{figure}
	
	For $i=1,2,3$, let $\tau_i=C_{i,i+1}\cap C_{i,i+2}$ (indices are taken mod 3). By the previous paragraph, each $\tau_i$ is a cell of type $\Gamma_0$. Let $X_0=\cap_{i=1}^3 X_i$. Then $\tau_i\subset X_0$ for $1\le i\le 3$. In particular $X_0$ is nonempty, hence is a standard subcomplex of type $\Gamma_0$ by Lemma~\ref{lem:intersection of cell and subcomplex} (1). Now we claim $\cap_{i=1}^3\tau_i\neq \emptyset$. Note that the lemma follows from this claim since $\tau_i\subset C_i$ for $1\le i\le 3$.
	
	Since each $\tau^{(0)}_i$ is an interval in the lattice $(X^{(0)}_0,\preccurlyeq_{\Gamma_0})$, it suffices to show that $\tau_i$ pairwise intersect. We only prove $\tau_1\cap\tau_2\neq\emptyset$, as the other pairs are similar. Now, consider the triple $\{C_{12},C_{21},X_0\}$. Note that $\tau_1\subset C_{12}$, $\tau_2\subset C_{21}$ and $\tau_i\subset X_0$ for $i=1,2$. Therefore $\tau_1\cap\tau_2\subset C_{12}\cap C_{21}\cap X_0$. On the other hand, since $C_{12}$ is a cell of type $\Gamma_1\cap\Gamma_2$ and $C_{12}\cap X_0\supset \tau_1\neq\emptyset$, we have that $C_{12}\cap X_0$ is a cell of type $\Gamma_0$ from Lemma~\ref{lem:intersection of cell and subcomplex}, hence $C_{12}\cap X_0=\tau_1$. Similarly $C_{21}\cap X_0=\tau_2$. Thus actually $\tau_1\cap\tau_2= C_{12}\cap C_{21}\cap X_0$. It remains to show that $C_{12}\cap C_{21}\cap X_0\neq\emptyset$ This follows from Lemma~\ref{lem:triple intersection1}, since $C_{12},C_{21},X_0$ are contained in $X_1\cap X_2$, which is a spherical standard subcomplex.
\end{proof}

\begin{prop}
	\label{prop:helly general}
	Let $\{C_i\}_{i=1}^n$ be a finite collection of cells in $X_\Gamma$. Suppose they pairwise intersect. Then $\cap_{i=1}^n C_i$ is a nonempty contractible subcomplex of $X_\Gamma$.
\end{prop}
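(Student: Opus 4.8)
The plan is to fix one cell, say $C_1$, and push the whole problem into the lattice $(C^{(0)}_1,\preccurlyeq_{C_1})$, where a Helly property for intervals is already available through Lemma~\ref{lem:lattice}. (The case $n=1$ is trivial, a cell being an oriented Coxeter cell; so assume $n\ge 2$.)

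For $2\le i\le n$ put $\tau_i=C_1\cap C_i$. By Lemma~\ref{lem:interval} the vertex set $\tau^{(0)}_i$ is an interval of $(X^{(0)}_\Gamma,\preccurlyeq)$; its endpoints lie in $C^{(0)}_1$, and by Lemma~\ref{lem:isometric}(2) the order $\preccurlyeq$ restricts to $\preccurlyeq_{C_1}$ on $C^{(0)}_1$ with intervals agreeing, so $\tau^{(0)}_i$ is in fact an interval of $(C^{(0)}_1,\preccurlyeq_{C_1})$ --- and this poset is a lattice, being isomorphic to a finite Coxeter group equipped with the right weak order (Theorem~\ref{thm:lattice}(1)). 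The intervals $\{\tau^{(0)}_i\}_{i=2}^n$ pairwise intersect, since $\tau_i\cap\tau_j=C_1\cap C_i\cap C_j\neq\emptyset$ by Lemma~\ref{lem:triple intersection2} applied to the pairwise intersecting triple $C_1,C_i,C_j$. Hence Lemma~\ref{lem:lattice}(2) shows that $\bigcap_{i=2}^n\tau^{(0)}_i$ is a nonempty interval of $(C^{(0)}_1,\preccurlyeq_{C_1})$. This set equals $\bigcap_{i=1}^n C^{(0)}_i$, and since each $C_i$ is a full subcomplex of $X_\Gamma$ (Lemma~\ref{lem:full}) so is $K:=\bigcap_{i=1}^n C_i$; therefore $K$ is a nonempty full subcomplex whose vertex set is an interval of $(C^{(0)}_1,\preccurlyeq_{C_1})$.

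It remains to prove $K$ is contractible. Being an intersection of full subcomplexes, $K$ is full in $X_\Gamma$; it is then also full as a subcomplex of $C_1$, because any face of $C_1$ whose vertices lie in $K^{(0)}$ is a cell of $X_\Gamma$ and hence is contained in $K$. Thus $K$ is precisely the full subcomplex of the oriented Coxeter cell $C_1$ spanned by an interval of $C^{(0)}_1$, so Lemma~\ref{lem:contractible} applies and $K$ is contractible.

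Since the substance has been carried by the preceding lemmas, the argument is short. The only points that need care are the passage from ``interval of $(X^{(0)}_\Gamma,\preccurlyeq)$'' to ``interval of $(C^{(0)}_1,\preccurlyeq_{C_1})$'', which rests on Lemma~\ref{lem:isometric}(2), and the observation that $K$ is full in $C_1$ rather than merely in $X_\Gamma$, which is what lets Lemma~\ref{lem:contractible} be invoked verbatim. I expect the only genuine obstacle --- producing a common point of a pairwise intersecting triple --- to have already been dealt with in Lemma~\ref{lem:triple intersection2}, so that the general $n$ reduces formally to it together with the lattice Helly property.
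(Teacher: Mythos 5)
Your proof is correct and follows essentially the same route as the paper's: reduce to intervals $\tau_i=C_1\cap C_i$ in the lattice $(C^{(0)}_1,\preccurlyeq_{C_1})$ via Lemma~\ref{lem:interval} and Lemma~\ref{lem:isometric}(2), get pairwise intersection from Lemma~\ref{lem:triple intersection2}, nonemptiness from Lemma~\ref{lem:lattice}(2), and contractibility from fullness (Lemma~\ref{lem:full}) together with Lemma~\ref{lem:contractible}. Your explicit justification that the intersection is full in $C_1$ (not just in $X_\Gamma$) is a small expository improvement over the paper's one-line remark, but the argument is the same.
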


\begin{proof}
	For $2\le i\le n$, let $\tau_i=C_1\cap C_i$. Then $\tau^{(0)}_i$ is an interval in $(X^{(0)}_\Gamma,\preccurlyeq)$, hence it is also an interval in $(C^{(0)}_1,\preccurlyeq_C)$ by Lemma~\ref{lem:isometric} (2). Moreover, it follows from Lemma~\ref{lem:triple intersection2} that $\{\tau_i\}_{i=2}^n$ pairwise intersect. Since $(C^{(0)}_1,\preccurlyeq_C)$ is a lattice (Theorem~\ref{thm:lattice} (1)), we have that $\cap_{i=2}^n\tau^{(0)}_i$ is a nonempty interval in $(C^{(0)}_1,\preccurlyeq_C)$ by Lemma~\ref{lem:lattice}, hence $\cap_{i=1}^n C_i\neq\emptyset$. Since each $C_i$ is a full subcomplex of $X_\Gamma$ (Lemma~\ref{lem:full}), $\cap_{i=1}^n C_i$ is full in $X_\Gamma$ (hence is full in $C_1$). Now, contractibility of $\cap_{i=1}^n C_i$ follows from Lemma~\ref{lem:contractible}.
\end{proof}

\subsection{Artin groups of type FC act on Helly graphs}
\label{ss:FC}
\begin{lemma}
	\label{lem:spherical triple intersection1}
	Suppose $\Gamma$ is spherical. Let $C_1,C_2$ and $C_3$ be pairwise intersecting cells in $X_\Gamma$. Then there is a cell $C\subset X_\Gamma$ such that $(C_1\cap C_2)\cup(C_2\cap C_3)\cup (C_3\cap C_1)\subseteq C$.
\end{lemma}

\begin{proof}
It suffices to consider the special case when each $C_i$ is a maximal cell of $X_\Gamma$. Consider the cell $C$ of type $\Gamma$ in $X_\Gamma$ with the source vertex being the identity. Let $\Delta$ be the sink vertex of $C$. Then by \cite{brieskorn1972artin,Deligne}, $A_\Gamma$ is a Garside group with $A^+_\Gamma$ being its Garside monoid and $\Delta$ being its Garside element (the $\phi$ in Definition~\ref{def:Garside} corresponds to conjugating by $\Delta$). Moreover, cells in the sense of Lemma~\ref{lem:spherical triple intersection} correspond to vertices of maximal cells of $X_\Gamma$. Thus the lemma follows from Lemma~\ref{lem:spherical triple intersection} and Lemma~\ref{lem:full}.
\end{proof}

\begin{prop}
	\label{prop:triple FC}
	Suppose that $\Gamma$ is of type FC. Let $C_1,C_2$ and $C_3$ be maximal pairwise intersecting cells in $X_\Gamma$. Then there is a maximal cell $C\subset X_\Gamma$ such that $D=(C_1\cap C_2)\cup(C_2\cap C_3)\cup (C_3\cap C_1)\subseteq C$.
\end{prop}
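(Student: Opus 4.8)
The plan is to reduce to the already-settled spherical case, Lemma~\ref{lem:spherical triple intersection1}, by exhibiting a single \emph{spherical} standard subcomplex of $X_\Gamma$ that contains all of $D$. First I would record the structural fact that, since $\Gamma$ is of type FC, a full subgraph of $\Gamma$ is spherical if and only if it is a clique: two non-adjacent vertices generate an infinite dihedral Coxeter group, so a non-clique is never spherical, while cliques are spherical by hypothesis. Consequently each maximal cell $C_i$ has type $\Gamma_i$ a maximal clique of $\Gamma$, and each intersection $\Gamma_i\cap\Gamma_j$ of types is again a clique. By Lemma~\ref{lem:triple intersection2} the triple intersection $C_1\cap C_2\cap C_3$ is nonempty; fix a vertex $v$ in it.

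Next I would single out the relevant subgraph. Let $\Gamma_D$ be the full subgraph of $\Gamma$ spanned by $V(\Gamma_1\cap\Gamma_2)\cup V(\Gamma_2\cap\Gamma_3)\cup V(\Gamma_3\cap\Gamma_1)$. The key observation is that $\Gamma_D$ is a clique: any two of its vertices lie in a common $\Gamma_i$ --- if they come from $\Gamma_i\cap\Gamma_j$ and $\Gamma_j\cap\Gamma_k$ they both lie in $\Gamma_j$, and if they come from the same piece $\Gamma_i\cap\Gamma_j$ they lie in $\Gamma_i$ --- hence they are adjacent, each $\Gamma_i$ being complete. By FC-ness, the clique $\Gamma_D$ is spherical. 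Let $X_D$ be the standard subcomplex of type $\Gamma_D$ through $v$, identified with $X_{\Gamma_D}$. I claim $D\subseteq X_D$: for each pair $i\neq j$ one has $C_i\cap C_j\subseteq X_i\cap X_j$, where $X_i$ is the standard subcomplex of type $\Gamma_i$ containing $C_i$; by Lemma~\ref{lem:intersection of cell and subcomplex}(1) the set $X_i\cap X_j$ is the standard subcomplex of type $\Gamma_i\cap\Gamma_j$ through $v$, and since $\Gamma_i\cap\Gamma_j$ is a full subgraph of $\Gamma_D$ this subcomplex sits inside $X_D$. Hence every $C_i\cap C_j$, and therefore $D$, lies in $X_D$.

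Now I would transport the configuration into $X_D$ and invoke the spherical case. Set $C_i':=C_i\cap X_D$; as $v\in C_i\cap X_D$, Lemma~\ref{lem:intersection of cell and subcomplex}(2) shows $C_i'$ is a cell of $X_\Gamma$ contained in $X_D$, and $C_i'\cap C_j'=(C_i\cap C_j)\cap X_D=C_i\cap C_j$ since $C_i\cap C_j\subseteq X_D$. So $C_1',C_2',C_3'$ are pairwise intersecting cells of $X_{\Gamma_D}$ with $\Gamma_D$ spherical, and $(C_1'\cap C_2')\cup(C_2'\cap C_3')\cup(C_3'\cap C_1')=D$. Applying Lemma~\ref{lem:spherical triple intersection1} inside $X_{\Gamma_D}$ produces a cell $C_0\subseteq X_D\subseteq X_\Gamma$ with $D\subseteq C_0$. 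To finish I would enlarge $C_0$ to a maximal cell of $X_\Gamma$: let $\Gamma_{C_0}$ be its type (a clique, being a subgraph of $\Gamma_D$) and $u$ its source vertex, choose a maximal clique $\Gamma_M\supseteq\Gamma_{C_0}$ of $\Gamma$ (spherical, by FC), and let $C$ be the unique cell of type $\Gamma_M$ with source $u$, which is maximal. Since $\Delta_{\Gamma_{C_0}}\preccurlyeq\Delta_{\Gamma_M}$ in the Artin monoid $A^+_\Gamma$, the interval $C_0^{(0)}=[u,u\Delta_{\Gamma_{C_0}}]$ lies inside $C^{(0)}=[u,u\Delta_{\Gamma_M}]$ (Lemma~\ref{lem:isometric}(2)), so Lemma~\ref{lem:isometric}(3) together with Lemma~\ref{lem:same source} give $C_0\subseteq C$; then $D\subseteq C_0\subseteq C$, and $C$ is the desired maximal cell.

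The only step carrying real content --- and the one I would be most careful to get right --- is that $\Gamma_D$ is spherical, which is exactly where the FC hypothesis is used: a union of cliques need not be a clique, but this particular union is, precisely because each $\Gamma_i$ is complete. Everything else is routine bookkeeping with standard subcomplexes (identifying $X_i\cap X_j$ and checking that it embeds in $X_D$, both immediate from Lemma~\ref{lem:intersection of cell and subcomplex} and the uniqueness of the standard subcomplex of a given type through a vertex) plus a repackaging of the spherical case, which ultimately rests on the Garside computation in Lemma~\ref{lem:spherical triple intersection}.
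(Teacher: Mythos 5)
Your proof is correct and follows essentially the same route as the paper: fix a common vertex of the triple intersection, use the FC hypothesis to find a spherical full subgraph (your clique $\Gamma_D$, the paper's $\Gamma_0$) containing $(\Gamma_1\cap\Gamma_2)\cup(\Gamma_2\cap\Gamma_3)\cup(\Gamma_3\cap\Gamma_1)$, pass to the standard subcomplex of that type through the common vertex, replace each $C_i$ by $C_i\cap X_D$ via Lemma~\ref{lem:intersection of cell and subcomplex}, and invoke Lemma~\ref{lem:spherical triple intersection1}. The only differences are cosmetic: you handle the paper's three cases uniformly, make explicit the observation that spherical subgraphs are cliques (so the union is spherical), and add the final enlargement of the resulting cell to a maximal cell, which the paper leaves implicit.
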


\begin{proof}
	For $1\le i\le 3$, suppose $C_i$ is of type $\Gamma_i$ and let $X_i$ be the standard spherical subcomplex of type $\Gamma_i$. If $X_1=X_2=X_3$, then the proposition follows from Lemma~\ref{lem:spherical triple intersection1}.
	
	Suppose two of the $X_i$ are the same. We assume without loss of generality that $X_1=X_2\neq X_3$. Let $C'_3=C_3\cap X_1$. Then $C_i\cap C_3=C_i\cap C'_3$ for $i=1,2$. Hence $D\subset (C_1\cap C_2)\cup(C_2\cap C'_3)\cup (C'_3\cap C_1)$. By Lemma~\ref{lem:intersection of cell and subcomplex} (2), $C'_3$ is a cell in $X_1$. Then we are done by Lemma~\ref{lem:spherical triple intersection1}.
	
	It remains to consider the case when $X_1,X_2$, and $X_3$ are mutually distinct. By Proposition~\ref{prop:helly general}, there is a vertex $p$ in $\cap_{i=1}^3 C_i$. Let $C_{ij}=C_i\cap C_j$ and $X_{ij}=X_i\cap X_j$. Then $X_{ij}$ is a spherical standard subcomplex of type $\Gamma_{ij}=\Gamma_i\cap \Gamma_j$ by Lemma~\ref{lem:intersection of cell and subcomplex} (1) (but $C_{ij}\subset X_{ij}$ might not be a cell in general). Since $\Gamma$ is of type FC, there is a spherical full subgraph $\Gamma_0\subset \Gamma$ such that $\Gamma_0$ contains $\Gamma_{12}\cup \Gamma_{23}\cup \Gamma_{31}$. Let $X_0$ be the spherical standard subcomplex of $X_\Gamma$ of type $\Gamma_0$ that contains $p$. Then for $1\le i\neq j\le 3$, we have $p\in C_i\cap C_j\subset X_{ij}\subset X_0$. It follows that $D\subset (C'_1\cap C'_2)\cup(C'_2\cap C'_3)\cup (C'_3\cap C'_1)$ where $C'_i=C_i\cap X_0$. Since each $C'_i$ is a cell (Lemma~\ref{lem:intersection of cell and subcomplex} (2)) and $X_0$ is spherical, the proposition follows from Lemma~\ref{lem:spherical triple intersection1}.
\end{proof}

\begin{theorem}
	\label{t:cHFC}
	Suppose $\Gamma$ is of type FC. 
	Then $(X_{\Gamma},\{C_i\})$ is a locally bounded cell Helly complex and, consequently, $A_\Gamma$ acts geometrically on a Helly graph
	being the $1$-skeleton of the thickening of $(X_{\Gamma},\{C_i\})$. Moreover, $X(\Gamma)$ is contractible.
\end{theorem}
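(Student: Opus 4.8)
The plan is to assemble Theorem~\ref{t:cHFC} directly from the pieces already in place, treating the three clauses in turn. First, for cell Hellyness of $X_\Gamma$ we must verify the three conditions of Definition~\ref{d:cellHelly} for the family $\{X_i\}$ of all closed cells in $X_\Gamma$. Condition (1) --- that every nonempty intersection of cells is connected and simply connected --- is exactly the content of Proposition~\ref{prop:helly general}, which in fact gives contractibility of $\bigcap_{i=1}^n C_i$; note that Lemma~\ref{lem:full} tells us each cell is a full subcomplex, so Definition~\ref{d:cellhelly} applies. Condition (2), the finite Helly property for cells, is the nonemptiness half of Proposition~\ref{prop:helly general}: pairwise intersecting cells have nonempty total intersection, proved there by reducing to intervals in the lattice $(C_1^{(0)},\preccurlyeq_C)$ via Lemma~\ref{lem:isometric}(2) and Lemma~\ref{lem:lattice}. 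Condition (3), the ``filling of corners'', is Proposition~\ref{prop:triple FC}: given three pairwise intersecting cells, we first enlarge each to a maximal cell (a corner in a face is a corner in the ambient cell, so it suffices to treat maximal cells), then apply Proposition~\ref{prop:triple FC} to obtain a single cell $C$ containing $(C_1\cap C_2)\cup(C_2\cap C_3)\cup(C_3\cap C_1)$. This establishes that $X_\Gamma$ is cell Helly.

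Second, the geometric action on a Helly graph. Since $X_\Gamma$ is the universal cover of the Salvetti complex $S_\Gamma$, it is simply connected, so $(X_\Gamma,\{C_i\})$ is a simply connected generalized cell Helly complex. Proposition~\ref{p:Helly_action} then applies verbatim once we check that $A_\Gamma$ acts geometrically on $X_\Gamma$ preserving the cell structure: indeed $A_\Gamma=\pi_1(S_\Gamma)$ acts freely and cocompactly on $X_\Gamma$ by deck transformations, and deck transformations permute the closed cells. Hence $A_\Gamma$ acts geometrically on the $1$-skeleton of the thickening of $(X_\Gamma,\{C_i\})$, which is Helly by Theorem~\ref{t:cH>H} (or directly by Proposition~\ref{p:Helly_action}). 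In particular $A_\Gamma$ is a Helly group.

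Third, contractibility of $X_\Gamma$. Here I would invoke Proposition~\ref{p:contract}: it requires that $(X_\Gamma,\{X_i\})$ be a simply connected generalized cell Helly complex in which the intersection of any collection of generalized cells is either empty or contractible. Simple connectedness is clear as above, the generalized cell Helly conditions were just verified, and the contractibility of arbitrary nonempty intersections of cells is again precisely Proposition~\ref{prop:helly general}. Therefore $X_\Gamma$ is contractible, which in turn gives the $K(\pi,1)$ property for the Salvetti complex of an FC-type Artin group.

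The main obstacle in this argument is entirely upstream, in Proposition~\ref{prop:triple FC} (condition (3)), and it has already been handled there: the delicate point is that for three maximal cells of distinct types one cannot directly find a containing cell, and one must use the FC hypothesis to produce a single spherical standard subcomplex $X_0$ containing all the pairwise-type subgraphs $\Gamma_{ij}$, restrict the three cells to $X_0$ (using Lemma~\ref{lem:intersection of cell and subcomplex}(2) to see the restrictions are still cells), and then reduce to the spherical case Lemma~\ref{lem:spherical triple intersection1}, which itself feeds off the Garside-theoretic Lemma~\ref{lem:spherical triple intersection}. Given those results, the proof of Theorem~\ref{t:cHFC} is just a matter of citing Proposition~\ref{prop:helly general}, Proposition~\ref{prop:triple FC}, and Proposition~\ref{p:Helly_action}/Proposition~\ref{p:contract} in sequence; no new calculation is needed.
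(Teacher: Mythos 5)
Your proposal is correct and follows essentially the same route as the paper: conditions (1) and (2) of Definition~\ref{d:cellHelly} from Proposition~\ref{prop:helly general}, condition (3) from Proposition~\ref{prop:triple FC}, the Helly graph action from Theorem~\ref{t:cH>H} and Proposition~\ref{p:Helly_action}, and contractibility from Proposition~\ref{p:contract} combined with Proposition~\ref{prop:helly general}. Your explicit remark that one may enlarge the three cells to maximal cells before applying Proposition~\ref{prop:triple FC} is a small point the paper leaves implicit, but it is the same argument.
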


\begin{proof}
	The conditions (1) and (2) of Definition~\ref{d:cellHelly} are satisfied by Proposition~\ref{prop:helly general}. The condition (3) holds by Proposition~\ref{prop:triple FC}. Since $X_\Gamma$ is simply connected,
	the theorem holds by Theorem~\ref{t:cH>H} and Proposition~\ref{p:Helly_action}. Contractibility follows from Proposition~\ref{prop:helly general} and Proposition~\ref{p:contract}.
\end{proof}

%


\bibliography{mybib}{}
\bibliographystyle{plain}

\end{document}